\newif\iflabels
\numberwithin{equation}{section}
\theoremstyle{plain}                
\newtheorem{theorem}{Theorem}[section]
\newtheorem{lemma}[theorem]{Lemma}
\newtheorem{proposition}[theorem]{Proposition}
\newtheorem{corollary}[theorem]{Corollary}
\theoremstyle{definition}           
\newtheorem{definition}[theorem]{Definition}
\newtheorem{example}[theorem]{Example}
\theoremstyle{remark}
\newtheorem{remark}[theorem]{Remark}
\newcommand{\define}[1]{{\textbf{#1}}}
\providecommand{\alias}{}
\renewcommand{\alias}[1]{\providecommand{#1}{}\renewcommand{#1}}
  \DeclarePairedDelimiter\ab{\langle}{\rangle} 
  \DeclarePairedDelimiter\abs{\lvert}{\rvert}   
  \DeclarePairedDelimiter\norm{\lVert}{\rVert}  
  \DeclarePairedDelimiterX\set[1]\{\}{ #1 }
  \DeclarePairedDelimiterX\sets[2]\{\}{ #1\,:\,#2 }
  \let\bPeexp\exp
  \let\exp\relax
  \DeclarePairedDelimiterXPP\exp[1]{\bPeexp}(){}{#1}
    \let\oldabs\abs \def\abs{\@ifstar{\oldabs}{\oldabs*}}
    \let\oldab\ab \def\ab{\@ifstar{\oldab}{\oldab*}}
    \let\oldnorm\norm \def\norm{\@ifstar{\oldnorm}{\oldnorm*}}
    \let\oldexp\exp \def\exp{\@ifstar{\oldexp}{\oldexp*}}
\DeclareMathOperator*\esssup{esssup}
  \newcommand{\opnorm}{\@ifstar\@opnorms\@opnorm}
  \newcommand{\@opnorm}[2][]{%
    \mathopen{#1|\mkern-1.5mu#1|\mkern-1.5mu#1|}
    #2
    \mathclose{#1|\mkern-1.5mu#1|\mkern-1.5mu#1|}
  }
\alias{\R}{{\mathbb R}}
\alias{\C}{{\mathbb C}}
\alias{\Z}{{\mathbb Z}}
\alias{\N}{{\mathbb N}}
\newcommand{\pd}[2]{\frac{\partial #1}{\partial #2}}
\newcommand{\Implies}{\Rightarrow}
\newcommand{\sP}{\mathcal{P}}
\newcommand{\sS}{\mathcal{S}}
\newcommand{\sF}{\mathcal{F}}
\newcommand{\sG}{\mathcal{G}}
\newcommand{\bP}{\mathbb{P}}
\newcommand{\bF}{\mathbb{F}}
\newcommand{\bE}{\mathbb{E}}
\newcommand{\bQ}{\mathbb{Q}}
\newcommand{\bD}{\mathbb{D}}
\newcommand{\klambda}{\boldsymbol{\lambda}}
\newcommand{\ka}{\boldsymbol{a}}
\newcommand{\kz}{\boldsymbol{z}}
\newcommand{\kA}{\boldsymbol{A}}
\newcommand{\kB}{\boldsymbol{B}}
\newcommand{\kV}{\boldsymbol{V}}
\newcommand{\kZ}{\boldsymbol{Z}}
\newcommand{\kb}{\boldsymbol{b}}
\newcommand{\lone}{L^1}
\newcommand{\lpee}{L^p}
\newcommand{\lque}{L^q}
\newcommand{\linf}{L^{\infty}}
\newcommand{\lques}{L^{q^*}}
\newcommand{\loi}{L^{1,\infty}}
\newcommand{\loq}{L^{1,q}}
\newcommand{\lto}{L^{2,1}}
\newcommand{\ltt}{L^{2,2}}
\newcommand{\ltp}{L^{2,p}}
\newcommand{\ltq}{L^{2,q}}
\newcommand{\ltqs}{L^{2.q^*}}
\newcommand{\lii}{L^{\infty, \infty}}
\newcommand{\sinf}{\sS^{\infty}}
\newcommand{\spee}{\sS^p}
\newcommand{\sque}{\sS^q}
\newcommand{\sques}{\sS^{q^*}}
\newcommand{\BMO}{\text{BMO}}
\newcommand{\bmo}{\text{bmo}}
\newcommand{\bmoh}{\bmo^{1/2}}
\newcommand{\mpee}{\mathcal{M}^p}
\newcommand{\m}{\mathcal{M}}
\newcommand{\rp}{(\text{R}_p)}
\newcommand{\rpc}{\text{R}_p}
\newcommand{\ro}{(\text{R}_1)}
\newcommand{\roc}{\text{R}_1}
\newcommand{\doi}{\bD^{1,\infty}}
\newcommand{\const}[1]{C=C(#1)}
\newcommand{\leqc}{\leq_C}
\newcommand{\lip}{\mathbf{Lip}}
\newcommand{\ql}{\mathbf{QL}}
\newcommand{\ud}{\mathbf{U}}
\newcommand{\apb}{\mathbf{A}}
\newcommand{\mal}{\mathbf{M}}
\newcommand{\sam}{\set{a_m}}
\begin{document}

\title[Reverse H\"older Inequality and Quadratic BSDEs]{The Reverse H\"older Inequality for Matrix-valued Stochastic Exponentials and Applications to Quadratic BSDE Systems}
\author{Joe Jackson} 
\address{Department of Mathematics, The University of Texas at Austin}
\email{jjackso1@utexas.edu}
\begin{abstract}
In this paper, we study the connections between three concepts - the reverse H\"older inequality for matrix-valued martingales, the well-posedness of linear BSDEs with unbounded coefficients, and the well-posedness of quadratic BSDE systems. In particular, we show that a linear BSDE with bmo (bounded mean oscillation) coefficients is well-posed if and only if the stochastic exponential of a related matrix-valued martingale satisfies a reverse H\"older inequality. Furthermore, we give structural conditions under which these two equivalent conditions are satisfied. Finally, we apply our results on linear equations to obtain global well-posedness results for two new classes of non-Markovian quadratic BSDE systems with special structure. 
\end{abstract}
\thanks{During the preparation of this work the author has been supported by the National Science Foundation under Grant No. DGE1610403 (2020-2023). Any opinions, findings and conclusions or recommendations expressed in this material are those of the author(s) and do not necessarily reflect the views of the National Science Foundation (NSF).
}
\maketitle

\section{Introduction}

Fix a probability space $(\Omega, \sF, \bP)$ which hosts a $d$-dimensional Brownian motion $\kB$. If $M \in \BMO(\R^{n \times n})$, i.e. $M$ is an $n \times n$ matrix-valued process each of whose components is a BMO martingale (with respect to the filtration of $\kB$), then there is a (potentially unbounded) process $\kA$ taking values in $(\R^d)^{n \times n}$ such that $M = \int \kA d\kB$ (we refer to the notations section for our conventions regarding multidimensional processes, but the meaning here should be clear at least when $d = 1$). By analogy with the scalar case, we can associate to $M$ a stochastic exponential $S$, which is the unique solution to the matrix SDE 
\begin{align*}
    \begin{cases}
    dS = S \kA d\kB, \\
    S_0 = I_{n \times n}.
    \end{cases}
\end{align*}
The first goal of this paper is to study the relationship between the well-posedness of the linear BSDE
\begin{align} \label{lin-bsde-1}
Y = \xi + \int_{\cdot}^T \big(\kA \kZ + \beta \big)  dt - \int_{\cdot}^T \kZ d \kB
\end{align}
and the properties of the stochastic exponential $S$. When $n = 1$, it is well known (see \cite{Kazamaki}) that $S$ is a uniformly integrable martingale which satisfies the reverse H\"older inequality $\rp$, i.e. the inequality 
\begin{align} \label{rhintro}
    \bE_{\tau}[|S_{\tau}^{-1}S_T|^p] \leq C
\end{align}
holds for some $p > 1$ and all stopping times $\tau$. This fact can be used to show that the BSDE \eqref{lin-bsde-1} is well-posed in an appropriate sense. When $n > 1$, $S$ need not even be a true martingale, as is demonstrated by counterexamples in \cite{Emery} and \cite{jackson2021existence}. In \cite{Delbaen-Tang}, Delbaen and Tang pointed out that a connection between the well-posedness of \eqref{lin-bsde-2} and the reverse H\"older inequality for $S$ still holds in higher dimensions, and part of the motivation for the present paper is to extend the results in Section 3 of \cite{Delbaen-Tang}.

Another goal of the present paper is to further explore the connection between the reverse H\"older inequality, linear BSDEs with $\bmo$ coefficients, and quadratic BSDE systems of the form
\begin{align} \label{introbsde}
    Y_{\cdot} = \xi + \int_{\cdot}^T f(\cdot, Y, \kZ) dt - \int_{\cdot}^T \kZ d\kB.
\end{align}
Here $f$ is a driver $f = f(t, \omega, y, \kz) : [0,T] \times \Omega \times \R^n \times (\R^d)^n \to \R^n$. This connection was first exploited in \cite{Briand-Elie}, where a-priori estimates for the linear equation \eqref{lin-bsde-1} were combined with a Malliavin calculus argument to produce an elegant proof of Kobylanski's original existence result (see \cite{Kobylanski}) for one-dimensional quadratic BSDEs. The approach used in \cite{Briand-Elie} fails in higher dimensions precisely because when $n > 1$, \eqref{lin-bsde-1} need not be well-posed in any reasonable sense. Nevertheless, it \textit{is} possible to adapt this approach to systems under smallness conditions (see \cite{harter2019}) or structural conditions (see \cite{jackson2021existence}).

\subsection{Our results}

\subsubsection{The relationship between S and BSDE(A)}
There are three main contributions of the paper. The first is to study the relationship between the exponential process $S$ and the well-posedness of \eqref{lin-bsde-1}. It turns out that uniqueness for \eqref{lin-bsde-1} is related to whether $S$ is a sufficiently integrable true martingale (see Propositions \ref{pro: yrep1} and \ref{pro:yrep}), while existence is related to the reverse H\"older inequality (see Propositions \ref{pro:linfexisth}, \ref{pro:linfexist} \ref{pro:lqexisth}, \ref{pro:lqexist}). Furthermore, if the equation is well-posed in $\sque$, then $S$ must be a true martingale satisfying $\rp$, where $q$ is the conjugate of $p$. This results in a simple equivalence which is stated formally in Theorem \ref{thm:main}. We summarize our results on the relationship between $S$ and \eqref{lin-bsde-1} in a table below. Throughout, $q$ denotes the conjugate of $p \in [1,\infty)$. 

\begin{table}[ht]
  \begin{center}
   \label{tab:table1}
   \begin{tabular}{l|l|l} 
      \textbf{Assumption} & \textbf{Implication} & \textbf{Precise Statement}\\
      \hline
      \scriptsize{$S$ is a true martingale} & \scriptsize{Uniqueness in $\sinf$ holds for \eqref{lin-bsde-1}} & \scriptsize{Proposition \ref{pro: yrep1}}\\
      \scriptsize{$S$ is a true martingale and $S_T \in \lpee$, $p > 1$} & \scriptsize{Uniqueness in $\sque$ holds for $\eqref{lin-bsde-1}$} & \scriptsize{Proposition \ref{pro:yrep}}\\
      \scriptsize{$S$ satisfies $(R_1)$} & \scriptsize{Existence in $\sinf$ holds for \eqref{lin-bsde-1}} & \scriptsize{Propositions \ref{pro:linfexisth} and \ref{pro:linfexist}} \\
      \scriptsize{$S$ satisfies $(R_{p^*})$ for some $p^* > p$} & \scriptsize{Existence in $\sque$ holds for \eqref{lin-bsde-1}} & \scriptsize{Propositions \ref{pro:lqexisth} and \ref{pro:lqexist}} \\
      \scriptsize{\eqref{lin-bsde-1} is well-posed in $\sque$} & \scriptsize{$S$ is a true martingale satisfying $\rp$} & \scriptsize{Proposition \ref{pro:rpfromeqn}} 
    \end{tabular}
  \end{center}
\end{table}

We note that the basic connection between \eqref{lin-bsde-1} and the reverse H\"older inequality is present in \cite{Delbaen-Tang} and \cite{harter2019}, and our existence results are similar in spirit to Theorem 3.2 of \cite{Delbaen-Tang}. Nevertheless, because of some subtle issues in Section 3 of \cite{Delbaen-Tang} (see Remark \ref{rmk:errors}) and because we need existence results which are well-tailored for our applications to quadratic BSDEs, we give a full treatment of existence via reverse H\"older. The sufficient conditions for uniqueness and the equivalence provided by Theorem \ref{thm:main} are, to the best of the author's knowledge, new, and complement nicely the existence results from \cite{Delbaen-Tang}.
We note also that we are able to use a result of \cite{jackson2021existence} to extend most of the results obtained here to the more general linear BSDE
\begin{align*}
    Y_{\cdot} = \xi + \int_{\cdot}^T \big(\alpha Y + \kA \kZ + \gamma\big) dt - \int_{\cdot}^T \kZ d\kB,
\end{align*}
see Proposition \ref{pro:sliceable}.

\subsubsection{Structural conditions which guarantee reverse H\"older}

The second main contribution is to identify structural conditions on the matrix $\kA$ which guarantee that the equivalent conditions of Theorem \ref{thm:main} are satisfied. One of these structural conditions, namely triangularity, was studied already in \cite{jackson2021existence} (although only the BSDE \eqref{lin-bsde-1} was considered there, and no properties of $S$ were inferred). The others, which we term left and right outer product structure, are new. We note that for matrices with left outer-product structure, it is easier to study the exponential $S$, and use Theorem \ref{thm:main} to infer well-posedness of the corresponding BSDE, while for matrices with right outer-product structure, it is easier to study the BSDE directly, and infer properties of $S$ through Theorem \ref{thm:main}. This fact illustrates the utility of our results on the relationship between $S$ and BSDE($\kA$). While it is known that sliceability (a form of local smallness studied in \cite{Delbaen-Tang} and \cite{harter2019}) of $\kA$ guarantees that $S$ satisfies the reverse H\"older inequality, our results show that the sliceability may be omitted if the matrix satisfies a nice structural condition. Furthermore, a result from \cite{jackson2021existence} allows us to extend our results to matrices of the form $\kA + \Delta \kA$, where $\kA$ has one of the special strucures listed above and $\Delta \kA$ is sliceable. See Proposition \ref{pro:sliceable} for a precise statement.

\subsubsection{Application to quadratic systems}

The third contribution of the paper is to apply our analysis of linear BSDEs with $\bmo$ coefficients to prove existence and uniqueness results for two classes of quadratic BSDE systems. The main results of this section are Theorems \ref{thm:qlmain} and \ref{thm:udmain}, which concern drivers with two different special structures. Admittedly, the structural conditions appearing in Theorems \ref{thm:qlmain} and \ref{thm:udmain} are very specific, but we emphasize that, to the best of the author's knowledge, these are the first global well-posedness results for non-Markovian quadratic BSDEs outside of the ``diagonal" (see \cite{HuTan16}) or the ``triangular" (see \cite{jackson2021existence}) settings. By global we mean without any smallness assumption on $T$, $\xi$, of $f$. 

Theorem \ref{thm:qlmain} concerns quadratic systems whose driver $f$ has a quadratic term of the form $\kz b^T \kz$ for some $b \in \R^n$. Viewing $\kz$ as an element of $(\R^d)^n$ as explained in the notations section, this amounts to the requirement that 
\begin{align*}
    f^i(t,\omega, y, \kz) = g^i(t,\omega, y, \kz) + \kz^i \cdot (b^T \kz) 
    = g^i(t,\omega, y, \kz) + \sum_{j = 1}^n  b_j  \kz^i \cdot\kz^j,
\end{align*}
where $g$ is Lipschitz in $y$ and $\kz$. We note that the quadratic term treated here has already been treated in a Markovian setting, namely in \cite{chenam} and \cite{xing2018}. Indeed, the term $\kz b^T \kz$ is a special case of the quadratic term treated in \cite{chenam}, which takes the form $\kz h(t,\omega, y, \kz)$, where $h$ is Lipschitz in $y$ and $\kz$, and the quadratic-linear term appearing in Definition 2.10 of \cite{xing2018} is more general still. But even in the Markovian case, our results are not covered by \cite{chenam} and \cite{xing2018}, because need to assume any smoothness on the terminal condition $\xi$.

Theorem \ref{thm:udmain} is an existence and uniqueness result for BSDEs with non-linearities of the form $a h(\kz)$ for some $a \in \R^n$ and $h : (\R^d)^n \to \R$ of quadratic growth. This amounts to the requirement that the driver $f$ decomposes as 
\begin{align*}
    f^i(t,\omega, y, \kz) = g^i(t,\omega, y,\kz) + a_i h(t,\omega, y,\kz), 
\end{align*}
where $g$ is a Lipschitz driver and we write $a = (a_1,...,a_n)$. We call such drivers unidirectional, because all of the quadratic growth ``points" in the direction of the vector $a$. Both Theorem \ref{thm:qlmain} and Theorem \ref{thm:udmain} are proved by combining our new results on linear systems with the Malliavin calculus approach introduced in \cite{Briand-Elie} and applied to systems in \cite{harter2019} and \cite{jackson2021existence}. 

We note that Theorems \ref{thm:qlmain} and \ref{thm:udmain} can be generalized in various ways, and in particular the smoothness assumptions on the driver $f$ can be removed through an approximation argument. We do not pursue these generalizations in order to avoid additional technicalities and to focus on the main idea of the paper, namely the connection between the reverse H\"older inequality, linear BSDEs with $\bmo$ coefficents, and quadratic BSDE systems.

\subsection{Structure of the paper}

In the remainder of the introduction, we fix notations and conventions, and recall some relevant facts about $\BMO$ martingales and their exponentials. In section \ref{sec:counter}, we present some counterexamples which demonstrate that \eqref{lin-bsde-1} is ill-posed in general. Sections \ref{sec:existunique} and \ref{sec:rh} contain our analysis of the relationship between the well-posedness of \eqref{lin-bsde-1} and the properties of $S$. Finally, in Section \ref{sec:structural} we consider specific structural condition on $\kA$ which guarantee well-posedness.

\subsection{Notations and Preliminaries}

\subsubsection{The probabilistic setup}
Throughout the paper, $\kB$ denotes a $d$-dimensional Brownian motion defined on a probability space $(\Omega, \sF, \bP)$. We denote by $\bF = (\sF_t)$ the augmentation of the natural filtration of $\kB$, and use $\bE_{\tau}[\cdot]$ to denote $\bE[\cdot | \sF_{\tau}]$ whenever $\tau$ is a stopping time. 

\subsubsection{Universal constants}
We use $n \in \N$ to denote the dimension of the unknown process $Y$ appearing in equations \eqref{lin-bsde-1}, and $T \in (0,\infty)$ to denote the time horizon. We use $d$ to denote the dimension of the Brownian motion $B$. Throughout the paper, $n$, $d$ and $T$ are considered fixed, and constants which depend only on these parameters are called universal. We will use $A \leqc B$ to mean $A \leq CB$. When a constant $C$ depends on some parameter $a$ in addition to $n$, $d$, and $T$, we state this by writing $C = C(a)$. For example, the statement 
\begin{align*}
    A \leqc B, \, \, \const{a}
\end{align*}
means that there exists a constant $C$, depending only on $a, n, d$, and $T$, such that $A \leq CB$.

\subsubsection{Spaces of random variables and processes.} We denote by $\sP$ the space of progressively measurable processes taking values in some Euclidean space, which will be made explicit when necessary, e.g. $\sP(\R^n)$ denotes the space of progressively measurable processes taking values in $\R^n$. We denote by $\mathcal{M}$ the space of continuous local martingales taking value in some fixed Euclidean space. Now let $1 \leq p \leq \infty$, and $1 \leq q < \infty$. We will work with the following spaces 
\begin{itemize}
    \item $\lpee$ is the set of all $p$-integrable random variables, vectors, or matrices.
    \item $\spee$ is the space of all continuous, adapted processes $Y$ such that 
    \begin{align*}
        \norm{Y}_{\spee} \coloneqq \norm{\sup_{0 \leq t \leq T} |Y_t|}_{\lpee} < \infty. 
    \end{align*}
    \item $\mpee$ is the set of all uniformly integral martingales with $M_T \in \lpee$.
    \item $\BMO$ denotes the set of all $M \in \mpee$ such that $M_0 = 0$ and 
    \begin{align*}
        \norm{M}_{\BMO} \coloneqq \sup_{\tau} \norm{ \bE_{\tau}[|M_T - M_{\tau}|^2] }_{\linf}^{1/2} < \infty,
    \end{align*}
    where the supremum is taken over all stoping times $\tau$ with $0 \leq \tau \leq T$. 
        \item $\bmo$ is the set of all $Z \in \sP$ such that 
    \begin{align*}
        \norm{Z}_{\bmo} = \sup_{0 \leq \tau \leq T} \norm{
        \bE[\int_{\tau}^T |Z_t|^2 dt]  }_{\linf}^{1/2} < \infty
    \end{align*}
    where the supremum is taken over all stopping times $\tau$ with $0 \leq \tau \leq T$. 
     \item $\bmoh$ is the set of all $\beta \in \sP$ such that 
    \begin{align*}
        \norm{\beta}_{\bmoh} \coloneqq \sup_{0 \leq \tau \leq T} \norm{
        \bE[\int_{\tau}^T |\beta_t| dt]  }_{\linf} < \infty
    \end{align*}
    where the supremum is taken over all stopping times $\tau$ with $0 \leq \tau \leq T$.
    \item $\ltp$ is the set of all $Z \in \sP$ such that 
    \begin{align*}
        \norm{Z}_{\ltp} \coloneqq \bE[  \big(\int_0^T |Z_t|^2 dt\big)^{p/2} ]^{1/p} < \infty. 
    \end{align*}
    \item $\lii$ is the set of all $Z \in \sP$ such that 
    \begin{align*}
        \norm{Z}_{\lii} \coloneqq \esssup_{[0,T] \times \Omega}{|Z|} < \infty. 
    \end{align*}
\end{itemize}
In all cases, the co-domain of the process and any additional measurability restrictions will be made clear when necessary. For example, $\lpee(\sF_T ; \R^n)$ denotes the space of $p$-integrable $\R^n$-valued random variables $\xi$ which are measurable with respect to $\sF_T$. In fact, unless indicated otherwise, $\xi$ will always be used to denote an $\R^n$-valued, $\sF_T$ measurable random vector.

\subsubsection{Conventions for multi-dimensional processes.}
We follow the conventions introduced in \cite{jackson2021existence} for multi-dimensional processes. Namely, we allow our processes to take values in spaces of vectors or matrices whose entries in turn lie in the Euclidean space $\R^d$, and interpret products in a natural way. For example, in the BSDE \eqref{lin-bsde-2}, we view the coefficient matrix $\kA$ as taking values in $(\R^{d})^{n \times n}$, and $\kZ$ as taking values in $(\R^d)^{n}$. So, for $1 \leq i,j \leq n$, the $(i,j)$-entry of $\kA$, written $\kA^i_j$ takes values in $\R^d$, and the $i^{th}$ entry of $\kZ$, written $\kZ^i$ takes values in $\R^d$. The product $\kA \kZ$ takes values in $\R^n$, and is given component-wise as 
\begin{align*}
    (\kA \kZ)_i = \sum_{j = 1}^n \kA^i_j \cdot \kZ^j, 
\end{align*}
with $\kA^i_j \cdot \kZ^j$ denoting the usual inner product of the $\R^d$-valued processes $\kA^i_j$ and $\kZ^j$. 

\subsubsection{Preliminaries on BMO and the reverse H\"older inequality.}

\begin{definition} \label{sdef}
Let $M \in \m(\R^{n \times n})$. We define the \textbf{stochastic exponential of $M$}, written $\mathcal{E}(M)$, to be the unique solution $S$ to the matrix SDE 
\begin{align} \label{expdef}
    dS = S dM, \, \, S_0 = I_{n \times n},
\end{align}
where $I_{n \times n}$ denotes the $n \times n$ identity matrix. 
\end{definition}
That \eqref{expdef} has a unique solution follows from \cite{Pro04} Theorem 3.7. Notice that $\int \kA d \kB$ is an $\R^{n \times n}$-valued local martingale. In the case that $M = \int \kA d\kB$, $S$ solves the matrix SDE\begin{align*}
    dS = S \kA d\kB, \, \, S_0 = I_{n \times n}.
\end{align*}
When there is no ambiguity, we will write $S$ for the stochastic exponential of $M$ (or $\int \kA d\kB)$ without making explicit the dependence on $M$ (or $\kA$). When $n = 1$, $S$ is the usual stochastic exponential of $M$, i.e. $S = \exp{M - \frac{1}{ 2}\langle M \rangle}$. 

\begin{definition} \label{def:rp}
Given $1 \leq p < \infty$, we say that $S$ satisfies the \textbf{reverse H\"older inequality} $\rp$ if the estimate
\begin{align}  \label{rhdef}
\norm{\bE_{\tau}[|S_{\tau}^{-1}S_T|^p]}_{\linf} \leq C
\end{align}
holds for all stopping times $\tau$ with $0 \leq \tau \leq T$ and some constant $C$. Here $| \cdot |$ denotes the operator norm for matrices. Moreover, we denote by $\rpc(S)$ the smallest constant $C$ such that \eqref{rhdef} holds for each $\tau$. 
\end{definition}

The conditional H\"older's inequality shows that if $1 \leq p < p' < \infty$ and $S$ satisfies $(R_{p'})$, then $S$ also satisfies $\rp$. Notice that if $S$ is a true martingale that satisfies $\rp$ then $S$ is necessarily in $\mathcal{M}^p$. In fact, the following stronger estimate holds. 

\begin{proposition} \label{pro:strongerrh}
Suppose that $S$ is a true martingale and satisfies $\rp$ for some $p > 1$. Then, $S$ satisfies the stronger estimate 
\begin{align} \label{strongrh}
    \norm{\bE_{\tau}[ \sup_{\tau \leq t \leq T} |S_{\tau}^{-1} S_t|^p]}_{\linf} \leq \big(\frac{p}{p-1}\big)^p \rpc(S)
\end{align}
for all stopping times $\tau$ with $ 0 \leq \tau \leq T$. 
\end{proposition}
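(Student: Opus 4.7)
The plan is to reduce the claim to a conditional version of Doob's $L^p$ maximal inequality applied to the matrix-valued martingale $M_t := S_\tau^{-1} S_t$ on the random interval $[\tau, T]$. Since $S$ solves the linear matrix SDE $dS = S \kA d\kB$, the matrix $S_\tau$ is invertible (one can exhibit $S^{-1}$ as the unique solution to the companion matrix SDE, cf.\ the scalar stochastic exponential), and $S_\tau^{-1}$ is $\sF_\tau$-measurable. Hence each entry of $M$ is a continuous local martingale on $[\tau,T]$, and in fact a true martingale because $S$ is a true martingale and multiplication by the $\sF_\tau$-measurable matrix $S_\tau^{-1}$ preserves the martingale property on $[\tau,T]$.

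Next, I would observe that $|M_t|$ (operator norm) is a nonnegative submartingale on $[\tau,T]$. This follows from the conditional Jensen inequality applied to the convex function $|\cdot|$ on matrices: for $\tau \leq s \leq t \leq T$,
\begin{equation*}
\bE_s[|M_t|] \;\geq\; |\bE_s[M_t]| \;=\; |M_s|.
\end{equation*}
Now the conditional version of Doob's $L^p$ maximal inequality, applied to this submartingale on the interval $[\tau, T]$, yields
\begin{equation*}
\bE_\tau\Bigl[\sup_{\tau \leq t \leq T} |M_t|^p\Bigr] \;\leq\; \Bigl(\tfrac{p}{p-1}\Bigr)^p \bE_\tau\bigl[|M_T|^p\bigr] \quad \text{a.s.}
\end{equation*}
(The conditional version is obtained from the usual one by a routine argument: apply Doob on the enlarged filtration, or equivalently multiply through by an indicator $\ind{A}$ for $A \in \sF_\tau$ and use the optional stopping / tower property.)

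Finally, by hypothesis $\bE_\tau[|M_T|^p] = \bE_\tau[|S_\tau^{-1} S_T|^p] \leq \rpc(S)$ almost surely, and plugging this into the previous display and taking the $\linf$ norm in $\omega$ gives \eqref{strongrh}. I do not expect any significant obstacle here; the only mild subtlety is the a priori integrability needed to invoke Doob, but this is secured by the remark preceding the proposition that a true martingale satisfying $(R_p)$ lies in $\mathcal{M}^p$, so $\sup_{t} |S_t|$ is in $\lpee$ and the conditional expectations are well defined.
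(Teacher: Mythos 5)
Your proof is correct and follows essentially the same route as the paper's: both reduce the claim to Doob's $L^p$ maximal inequality for the martingale $S_\tau^{-1}S$ on $[\tau,T]$, with the conditioning on $\sF_\tau$ handled by the indicator argument you mention (the paper fixes $F\in\sF_\tau$ and works with $1_F S_\tau^{-1}S$ directly). Your explicit remark that $|M_t|$ is a nonnegative submartingale by conditional Jensen, so that Doob applies to the operator norm of a matrix-valued martingale, is a detail the paper leaves implicit.
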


\begin{proof}
Fix a stopping time $\tau$ and an event $F \in \sF_{\tau}$. The process $ M \coloneqq 1_{F} S_{\tau}^{-1} S$ is a martingale on the stochastic interval $[\tau, T]$. Moreover, 
\begin{align*}
    \bE[ |M_T|^p] = \bE[ |1_F S_{\tau}^{-1}S_T|^p] = \bE[ 1_F \bE_{\tau}[|S_{\tau}^{-1}S_T|^p] \leq \rpc(S)\bP[F]
\end{align*}
It follows from Doob's maximal inequality that 
\begin{align*}
    \bE[ 1_F \sup_{\tau \leq u \leq T} |S_{\tau}S_u|^p] = \bE[ \sup_{\tau \leq u \leq T} |M_u|^p] \leq \big(\frac{p}{p-1}\big)^p \bE[|M_T|^p] \leq \big(\frac{p}{p-1}\big)^p \rpc(S) \bP[F]. 
\end{align*}
The desired estimate follows immediately.
\end{proof}

The following Proposition is adapted from Theorem 3.1 of \cite{Delbaen-Tang}, and the proof is the same as Corollary 3.2 of \cite{Kazamaki}. 

\begin{proposition} \label{pro:largerp}
Suppose that $S$ is a true martingale which satisfies $\rp$, for some $p > 1$. Then there exists $p' > p$ such that $S$ satisfies $(R_{p'})$. 
\end{proposition}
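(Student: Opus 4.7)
The plan is to adapt the scalar self-improvement argument of Kazamaki to the matrix setting, using submultiplicativity of the operator norm together with Doob's maximal inequality applied to the continuous matrix martingale $t \mapsto S_\tau^{-1} S_t$ (the same martingale exploited in Proposition \ref{pro:strongerrh}). Fix a stopping time $\tau \in [0,T]$, set $w_t \coloneqq |S_\tau^{-1} S_t|$ for $t \in [\tau, T]$, and write $w_T^\ast = \sup_{t \in [\tau, T]} w_t$. Note that $w_\tau = 1$ and $w$ is continuous.

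First I would establish a distributional estimate at the heart of the argument. For $\lambda \geq 1$, set $\sigma_\lambda = \inf \set{ t \geq \tau : w_t \geq \lambda } \wedge T$; path continuity of $S$ together with $w_\tau = 1 \leq \lambda$ force $w_{\sigma_\lambda} = \lambda$ on $\set{ \sigma_\lambda < T } = \set{ w_T^\ast > \lambda }$. Submultiplicativity of the operator norm then gives $w_T \leq \lambda \, |S_{\sigma_\lambda}^{-1} S_T|$ on that event, and applying $\rp$ at $\sigma_\lambda$ together with the tower property yields
\begin{align*}
\bE_\tau \bkt*{ w_T^p \, \ind{w_T^\ast > \lambda} } \leq \lambda^p \, \rpc(S) \, \bP \bkt*{ w_T^\ast > \lambda \mid \sF_\tau }, \qquad \lambda \geq 1.
\end{align*}

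I would then combine this with the layer-cake identity $w_T^{p'} = (p' - p) \int_0^\infty \lambda^{p' - p - 1} w_T^p \, \ind{w_T > \lambda} \, d\lambda$, splitting the $\lambda$-integral at $\lambda = 1$. The contribution of $[0,1]$ is trivially bounded by $\bE_\tau[w_T^p] \leq \rpc(S)$, while the $[1,\infty)$ piece, together with the distributional estimate and Fubini, is bounded by $\tfrac{(p' - p) \rpc(S)}{p'} \bE_\tau[(w_T^\ast)^{p'}]$. Doob's maximal inequality then gives $\bE_\tau[(w_T^\ast)^{p'}] \leq \bPn*{\tfrac{p'}{p' - 1}}^{p'} \bE_\tau[w_T^{p'}]$. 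Writing $c(p') \coloneqq (p' - p) \rpc(S) (p')^{p' - 1}/(p' - 1)^{p'}$, which tends to $0$ as $p' \downarrow p$, I would fix $p' > p$ so small that $c(p') < 1$, arriving at the absorption inequality $\bE_\tau[w_T^{p'}] \leq \rpc(S) + c(p') \, \bE_\tau[w_T^{p'}]$.

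The main obstacle is the a priori finiteness of $\bE_\tau[w_T^{p'}]$, without which the absorption inequality cannot be rearranged. I would handle this by localization: for each $N \geq 1$ set $\rho_N = \inf \set{ t \geq \tau : w_t \geq N } \wedge T$ and run the argument above on the bounded matrix martingale $S_\tau^{-1} S_{\cdot \wedge \rho_N}$. The analogous distributional estimate still holds, since on $\set{\sigma_\lambda < T}$ one has $\sigma_\lambda \leq \rho_N$, and conditional Jensen applied to the martingale $S_{\sigma_\lambda}^{-1} S_\cdot$ (with convex function $|\cdot|^p$) gives $\bE_{\sigma_\lambda}[|S_{\sigma_\lambda}^{-1} S_{T \wedge \rho_N}|^p] \leq \bE_{\sigma_\lambda}[|S_{\sigma_\lambda}^{-1} S_T|^p] \leq \rpc(S)$. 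The truncated $p'$-moment is bounded by $N^{p'} < \infty$, so rearrangement yields $\bE_\tau[|S_\tau^{-1} S_{T \wedge \rho_N}|^{p'}] \leq \rpc(S)/(1 - c(p'))$ uniformly in $N$ and $\tau$. Since $\rho_N \uparrow T$ a.s.\ by continuity of $S$, Fatou's lemma passes this bound to the limit and establishes $(R_{p'})$.
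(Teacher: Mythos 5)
Your argument is correct and is exactly the route the paper takes: the paper's proof of Proposition \ref{pro:largerp} is given by reference to Corollary 3.2 of \cite{Kazamaki}, and what you have written is that self-improvement (Gehring-type) argument carried over to the matrix setting via submultiplicativity of the operator norm, conditional Doob for the submartingale $|S_\tau^{-1}S_t|$, and a localization to justify the absorption step. No gaps; the only cosmetic point is that $\set{\sigma_\lambda<T}$ need not equal $\set{w_T^\ast>\lambda}$ exactly, but the chain of inclusions $\set{w_T^\ast>\lambda}\subseteq\set{\sigma_\lambda<T}\subseteq\set{w_T^\ast\geq\lambda}$ is all the layer-cake computation requires.
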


The following well-known result, which combines Theorems 2.3 and 3.1 of \cite{Kazamaki}), is one of the reasons that $\BMO$ martingales are so usefel. 

\begin{theorem} \label{thm:1drh}
 If $n = 1$ and $M \in \BMO$, then $S$ is a uniformly integrable martingale which satisfies $\rp$ for some $p = p(\norm{\kA}_{\bmo}) > 1$. 
\end{theorem}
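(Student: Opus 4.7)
The plan is to reduce everything to the scalar case $n=1$, where $S = \exp{M - \tot \langle M \rangle}$ is the classical stochastic exponential, so both conclusions hinge on exponential integrability of the quadratic variation of $\BMO$ martingales. The single quantitative input I would use is the \emph{energy inequality} for $\BMO$ martingales: the moment bound $\bE_\tau[(\langle M \rangle_T - \langle M \rangle_\tau)^k] \leq k!\,\|M\|_{\BMO}^{2k}$ holds for every $k \in \N$, and summing the Taylor series yields
\begin{equation*}
    \bE_\tau[\exp{\lambda(\langle M \rangle_T - \langle M \rangle_\tau)}] \leq (1-\lambda \|M\|_{\BMO}^2)^{-1}
\end{equation*}
uniformly in stopping times $\tau \leq T$, whenever $\lambda \|M\|_{\BMO}^2 < 1$.

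For the uniformly integrable martingale claim, I would first handle the case $\|M\|_{\BMO}^2 < 2$ directly by Novikov's criterion, since the energy bound with $\lambda = \tot$ gives $\bE[\exp{\tot \langle M \rangle_T}] < \infty$. For general $\|M\|_{\BMO}$, continuity of $\langle M \rangle$ (automatic in the Brownian filtration) together with $\bE[\langle M \rangle_T] \leq \|M\|_{\BMO}^2 < \infty$ lets me construct a finite sequence of stopping times $0 = \tau_0 \leq \tau_1 \leq \dots \leq \tau_K = T$ such that the $\BMO$ norm of $M$ on each piece $[\tau_{k-1}, \tau_k]$ is strictly less than $\sqrt{2}$. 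Applying Novikov on each piece and chaining via the tower property then gives $\bE[S_T] = 1$, so $S$ is a UI martingale.

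For the reverse H\"older inequality, the key algebraic identity---valid for any $r > 1$---is
\begin{equation*}
    \frac{S_T^p}{S_\tau^p} = \Bigl(\frac{\mathcal{E}(rpM)_T}{\mathcal{E}(rpM)_\tau}\Bigr)^{1/r} \exp{\tfrac{p(rp-1)}{2}(\langle M \rangle_T - \langle M \rangle_\tau)}.
\end{equation*}
Applying conditional H\"older with exponents $r$ and $r/(r-1)$, and using that $\mathcal{E}(rpM)$ is a UI martingale by the first half of the theorem applied to the $\BMO$ martingale $rpM$, I obtain
\begin{equation*}
    \bE_\tau[S_T^p / S_\tau^p] \leq \bE_\tau[\exp{\tfrac{r}{r-1} \cdot \tfrac{p(rp-1)}{2} (\langle M \rangle_T - \langle M \rangle_\tau)}]^{(r-1)/r},
\end{equation*}
which is uniformly bounded by the energy inequality provided $\tfrac{r}{r-1}\cdot\tfrac{p(rp-1)}{2}\|M\|_{\BMO}^2 < 1$. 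Taking both $p-1$ and $r-1$ small enough---quantitatively in terms of $\|M\|_{\BMO} = \|\kA\|_{\bmo}$---makes this constraint satisfiable, yielding $\rp$ with the asserted dependence. The main technical point is the stopping-time slicing underlying the UI step, namely producing intervals of genuinely small restricted $\BMO$ norm; once that is in hand, the H\"older step is essentially algebraic parameter bookkeeping.
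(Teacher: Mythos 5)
The paper offers no proof of this statement---it is quoted from Theorems 2.3 and 3.1 of Kazamaki's book---so the only question is whether your self-contained argument is sound. It is not: both halves contain a genuine gap, and in fact both gaps trace back to the same missing ingredient.

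For the uniform integrability step, you claim that continuity of $\langle M\rangle$ together with $\bE[\langle M\rangle_T]<\infty$ produces a \emph{finite} sequence of stopping times $0=\tau_0\le\cdots\le\tau_K=T$ with the restricted $\BMO$ norm of $M$ on each piece below $\sqrt2$. This does not follow from those two facts. The natural construction, $\tau_j=\inf\{t:\langle M\rangle_t\ge jc\}\wedge T$, does make each increment $\langle M\rangle_{\tau_j}-\langle M\rangle_{\tau_{j-1}}$ pointwise bounded by $c$, but the number of slices needed to reach $T$ is random and unbounded unless $\langle M\rangle_T\in\linf$, which membership in $\BMO$ does not provide. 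Truncating at a deterministic $K$ leaves the event $\{\langle M\rangle_T>Kc\}$ uncovered, and controlling the contribution of $S$ on that event is precisely the content of Kazamaki's Theorem 2.3 (handled there via the John--Nirenberg inequality); you have assumed away the hard part. Note also that the paper treats sliceability of a $\bmo$ process as a nontrivial structural hypothesis (Proposition \ref{pro:sliceable}), which should warn you that finite slicing at a prescribed level is not automatic.

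The reverse H\"older step fails outright for large $\|M\|_{\BMO}$. Your H\"older split requires $\tfrac{r}{r-1}\cdot\tfrac{p(rp-1)}{2}\,\|M\|_{\BMO}^2<1$, and the infimum of the prefactor over all admissible $p,r>1$ equals $\tfrac12$, approached only in the joint limit $p,r\downarrow1$ with $p-1\ll r-1$. Hence this argument yields $\rp$ only when $\|M\|_{\BMO}<\sqrt2$; for larger norms no choice of $p$ and $r$ satisfies the constraint, because the conditional bound $\bE_\tau[\exp{\lambda(\langle M\rangle_T-\langle M\rangle_\tau)}]\le(1-\lambda\|M\|_{\BMO}^2)^{-1}$ is available only for $\lambda<\|M\|_{\BMO}^{-2}$, while your exponent $\tfrac{r}{r-1}\cdot\tfrac{p(rp-1)}{2}$ never drops below $\tfrac12$. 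This is exactly why Kazamaki's threshold function $\Phi(p)$ grows only logarithmically as $p\downarrow1$: his proof of Theorem 3.1 needs an extra ingredient (the invariance of $\BMO$ under the Girsanov change of measure induced by $\mathcal{E}(pM)$, or equivalently an iteration of the John--Nirenberg distributional estimate), not a single application of conditional H\"older plus the energy inequality. Your algebraic identity and the supermartingale bound $\bE_\tau[\mathcal{E}(rpM)_T/\mathcal{E}(rpM)_\tau]\le1$ are correct; and if the finite slicing of the first half were actually available, one could chain $\rp$ across the slices by the tower property and repair this half too---but that slicing is exactly what has not been established.
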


\subsubsection{Definitions of well-posedness.}
In addition to \eqref{lin-bsde-1},
We will also consider the homogenous version of \eqref{lin-bsde-1}, namely the BSDE 
\begin{align} \label{lin-bsde-2}
    Y = \xi + \int_{\cdot}^{T} \kA \kZ dt - \int_{\cdot}^{T} \kB. 
\end{align}
Given $\kA \in \bmo((\R^d)^{n \times n})$, we refer to the equation \eqref{lin-bsde-1} as BSDE($\kA$) and \eqref{lin-bsde-2} as HBSDE($\kA$), the H indicating the homogenous version of the equation. 
The following definitions will give a precise meaning to the well-posedness of \eqref{lin-bsde-1} and \eqref{lin-bsde-2}. 

\begin{definition} \label{def:wellposed}
Let $\kA \in \bmo((\R^{d})^{n \times n})$. We say that BSDE($\kA$) is \textbf{well-posed in $\sinf$} if for each $(\xi, \beta) \in \linf \times \loi$ there is a unique solution $(Y,\kZ) \in \sinf \times \bmo$ to \eqref{lin-bsde-1}, which satisfies the estimate
\begin{align*}
    \norm{Y}_{\sinf} + \norm{\kZ}_{\bmo} \leqc \norm{\xi}_{\sinf} + \norm{\beta}_{\loi}, \, \, \const{\kA}. 
\end{align*}
We say that HBSDE($\kA$) is \textbf{well-posed in $\sinf$} if for each $\xi \in \linf$, there is a unique solution $(Y,\kZ) \in \sinf \times \bmo$ to \eqref{lin-bsde-2}, which satisfies the estimate
\begin{align*}
    \norm{Y}_{\sinf} + \norm{\kZ}_{\bmo} \leqc \norm{\xi}_{\sinf}, \, \, \const{\kA}. 
\end{align*}
For $1 < q <\infty$, we say that BSDE($\kA$) is \textbf{well-posed in $\sque$} if for each $\xi \in \lque$ and $\beta \in \loq$, there is a unique solution $(Y,\kZ) \in \sque \times \ltq$ to \eqref{lin-bsde-1}, which satisfies the estimate 
\begin{align*}
    \norm{Y}_{\sque} + \norm{\kZ}_{\ltq} \leq_{C_q} \norm{\xi}_{\lque } + \norm{\beta}_{\loq}, \, \, C_q = C_q(\kA). 
\end{align*}
We say that HBSDE($\kA$) is \textbf{well-posed in $\sque$} if for each $\xi \in \lque$, there is a unique solution $(Y,\kZ) \in \sque \times \ltq$ to \eqref{lin-bsde-2} which satisfies the estimate 
\begin{align*}
    \norm{Y}_{\sinf} + \norm{\kZ}_{\ltq} \leq_{C_q} \norm{\xi}_{\lque }, \, \, C_q = C_q(\kA). 
\end{align*}
\end{definition}
If BSDE($\kA$) is well-posed in $\sque$, then there is a bounded solution operator $S_{\kA} : \lque \times \loq \to \sque \times \ltq$ (or $\linf \times \loi \to \sinf \times \bmo$ if $q = \infty$). We denote the by $\opnorm{S_{\kA}}_q$ the norm of this operator. That is, when $q < \infty$, $\opnorm{S_{\kA}}_q$ is the smallest constant $C$ verifying the estimate 
\begin{align*}
    \norm{Y}_{\sque} + \norm{\kZ}_{\ltq} \leqc \norm{\xi}_{\lque} + \norm{\beta}_{\ltq}
\end{align*}
for each $(\xi, \beta) \in \lque \times \loq$, where $(Y,\kZ) \in \sque \times \bmo$ is the unique solution to \eqref{lin-bsde-1}. A similar description holds for $\opnorm{S_{\kA}}_{\sinf}$. 
Of course, if BSDE($\kA$) is well-posed in $\sque$, then HBSDE($\kA$) is well-posed in $\sque$ as well.

There is one more notion of well-posedness, which, for technical reasons, will be convenient to define. 
\begin{definition}
We say that BSDE($\kA$) is \textbf{strongly well-posed in $\sinf$} if for each $\xi \in \linf$ and $\beta \in \bmoh$, there is a unique solution $(Y,\kZ) \in \sinf \times \bmo$ to \eqref{lin-bsde-1}, which satisfies the estimate 
\begin{align*}
    \norm{Y}_{\sinf} + \norm{\kZ}_{\ltq} \leqc \norm{\xi}_{\linf} + \norm{\beta}_{\bmoh}, \, \, \const{\kA}. 
\end{align*}
\end{definition}
If BSDE($\kA$) is strongly well-posed in $\sinf$, we again denote by $S_{\kA}$ the bounded solution map $\linf \times \bmoh \to \sinf \times \bmo$, and denote its norm by $\opnorm{S_{\kA}}_{\infty, s}$.
Since $\loi \subset \bmoh$, and $\norm{\beta}_{\bmoh} \leq \norm{\beta}_{\loi}$, it is clear that if BSDE($\kA$) is strongly well-posed in $\sinf$ then it is well-posed in $\sinf$, with $\opnorm{S_{\kA}}_{\infty} \leq \opnorm{S_{\kA}}_{\infty,s}$.

Theorem \ref{thm:1drh}, together with a well-known change of measure technique, can be used to prove the following.

\begin{theorem} \label{thm:1dwellposed}
Suppose that $n = 1$ and $\kA \in \bmo$. Then there exists $q^* = q^*(\norm{\kA}_{\bmo})$ such that BSDE($A$) is well-posed in $\sque$ for each $q > q^*$. Moreover, the constants appearing in the definition of well-posedness depend only on $\norm{\kA}_{\bmo}$. 
\end{theorem}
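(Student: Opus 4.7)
The plan is to use a Girsanov change of measure. By Theorem \ref{thm:1drh}, there exists $p_0 = p_0(\norm{\kA}_{\bmo}) > 1$ such that $S := \mathcal{E}(\int \kA \, d\kB)$ is a uniformly integrable $\bP$-martingale satisfying $(R_{p_0})$. I would set $q^* := p_0/(p_0 - 1)$, the conjugate of $p_0$, and define $\bQ$ by $d\bQ/d\bP = S_T$. By Girsanov's theorem, $\tilde{B}_t := \kB_t - \int_0^t \kA_s \, ds$ is a $\bQ$-Brownian motion, and under $\bQ$ equation \eqref{lin-bsde-1} rewrites as $Y_{\cdot} = \xi + \int_{\cdot}^T \beta \, dt - \int_{\cdot}^T \kZ \, d\tilde{B}$, which is linear with no $\kZ$-dependence in the driver. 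This suggests the candidate
\begin{align*}
    Y_t \;:=\; \bE^{\bQ}\bigg[\xi + \int_t^T \beta_s \, ds \;\Big|\; \sF_t\bigg],
\end{align*}
with $\kZ$ obtained from the $\bQ$-martingale representation of $t \mapsto \bE^{\bQ}[\xi + \int_0^T \beta_s \, ds \mid \sF_t]$; a short It\^o calculation would then verify that $(Y, \kZ)$ solves \eqref{lin-bsde-1} under $\bP$.

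Next I would establish the a priori estimates. Rewriting the conditional $\bQ$-expectation under $\bP$ gives
\begin{align*}
    |Y_t| \;\leq\; \bE_t\bigg[\frac{S_T}{S_t}\bigg(|\xi| + \int_0^T |\beta_s| \, ds\bigg)\bigg].
\end{align*}
For $q > q^*$, conditional H\"older with exponents $p_0$ and $q^*$, combined with the strengthened reverse H\"older estimate of Proposition \ref{pro:strongerrh}, would yield $|Y_t| \leqc \bE_t[W^{q^*}]^{1/q^*}$, where $W := |\xi| + \int_0^T |\beta_s| \, ds$. Since $q/q^* > 1$, Doob's $L^{q/q^*}$ maximal inequality applied to the $\bP$-martingale $t \mapsto \bE_t[W^{q^*}]$ would bound $\bE[\sup_t |Y_t|^q]$ by a constant depending only on $\norm{\kA}_{\bmo}$ times $\bE[W^q]$, and Minkowski gives $\norm{Y}_{\sque} \leqc \norm{\xi}_{\lque} + \norm{\beta}_{\loq}$. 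The $\ltq$-bound on $\kZ$ would follow by isolating $\int_{\cdot}^T \kZ \, d\kB$ in \eqref{lin-bsde-1}, applying Burkholder--Davis--Gundy, and absorbing the linear term $\int \kA \kZ \, dt$ via the $\bmo$-energy inequality for $\kA$.

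For uniqueness, given any $(Y', \kZ') \in \sque \times \ltq$ solving \eqref{lin-bsde-1}, It\^o's formula shows that $S_t Y'_t + \int_0^t S_s \beta_s \, ds$ is a local $\bP$-martingale. Since $q > q^*$ forces the conjugate $p := q/(q-1)$ to satisfy $p < p_0$, and since $S_T \in \lpee$ for every $p \leq p_0$, H\"older's inequality controls the supremum of this local martingale in $\lone$, upgrading it to a true martingale; taking conditional expectations recovers the explicit formula for $Y'$, so $Y' = Y$ and consequently $\kZ' = \kZ$ by uniqueness in the martingale representation. I expect the main obstacle to be the careful bookkeeping in the $\kZ$-estimate, where the term $\int \kA \kZ \, dt$ must be absorbed on the left using only the $\bmo$-norm of $\kA$, in a way that keeps all constants depending solely on $\norm{\kA}_{\bmo}$.
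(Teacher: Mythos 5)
Your architecture is exactly the one the paper has in mind: the paper gives no detailed proof of Theorem \ref{thm:1dwellposed}, stating only that it follows from Theorem \ref{thm:1drh} ``together with a well-known change of measure technique,'' and your Girsanov setup, the candidate $Y_t=\bE^{\bQ}_t[\xi+\int_t^T\beta_s\,ds]=S_t^{-1}\bE_t[S_T(\xi+\int_t^T\beta_s\,ds)]$, the conditional-H\"older estimate for $\norm{Y}_{\sque}$ via $(R_{p_0})$ and Doob, and the uniqueness argument via the true-martingale property of $S Y'+\int_0^{\cdot}S\beta\,ds$ (which is the paper's Proposition \ref{pro:yrep}) are all correct and standard.

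The one step that does not close as written is the $\ltq$-bound on $\kZ$, which you yourself flag as the main obstacle. Isolating $\int_0^{\cdot}\kZ\,d\kB$ in \eqref{lin-bsde-1} and applying BDG leaves the term $\norm{\int_0^T|\kA_s||\kZ_s|\,ds}_{\lque}$ on the right; the Fefferman-type energy inequality bounds this by $C_q\norm{\kA}_{\bmo}\norm{\kZ}_{\ltq}$, and since $\norm{\kA}_{\bmo}$ is arbitrary this coefficient is not less than one, so it cannot be ``absorbed on the left.'' (Pathwise Cauchy--Schwarz fares no better: $(\int_0^T|\kA|^2dt)^{1/2}$ is only exponentially integrable, and splitting it off by H\"older demands integrability of $\sup_t|Y_t|$ strictly above $q$, which $\xi\in\lque$ does not provide.) The standard repair --- and the actual content of the ``well-known change of measure technique'' --- is to estimate $\kZ$ under $\bQ$: there $Y+\int_0^{\cdot}\beta\,ds=Y_0+\int_0^{\cdot}\kZ\,d\tilde B$ is a local $\bQ$-martingale with quadratic variation $\int_0^{\cdot}|\kZ|^2dt$, so BDG under $\bQ$ gives $\bE^{\bQ}[(\int_0^T|\kZ|^2dt)^{r/2}]\leqc\bE^{\bQ}[\sup_t|Y_t+\int_0^t\beta\,ds|^{r}]$ with no drift term to absorb; one then transfers $L^r(\bQ)$-moments back to $L^q(\bP)$-moments using the weighted norm inequalities that $(R_{p_0})$ provides (Section 3 of \cite{Kazamaki}, where reverse H\"older for $S$ is shown to be equivalent to a Muckenhoupt-type condition making the two families of norms comparable). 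This transfer, not the $Y$-estimate, is what actually determines the threshold $q^*$, so you should expect to enlarge your choice $q^*=p_0/(p_0-1)$ accordingly, still as a function of $\norm{\kA}_{\bmo}$ only.
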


\section{Counterexamples} \label{sec:counter}

In this section, we show that no analogue of Theorem \ref{thm:1drh} or Theorem \ref{thm:1dwellposed} can hold when $n > 1$. That $M \in \BMO(\R^{n \times n})$ does not imply that $S$ is a true martingale was demonstrated first by \'Emery in \cite{Emery}. Because we will be using the example in \cite{Emery} as a starting point to build a nonexistence example, we present it in detail. 

\begin{example}[\cite{Emery}]
Recall that complex numbers can be identified with $2 \times 2$ real matrices of the form 
$\begin{pmatrix}
x & y \\
-y & x
\end{pmatrix}$
via the isomorphism
\begin{align*}
    z = x + iy \mapsto \Phi(z) \coloneqq \begin{pmatrix}
    x & y \\ 
    -y & x
    \end{pmatrix}.
\end{align*}
\'Emery's first step was the to observe that if $N$ is a complex-valued martingale, then $M = \Phi(N)$ is a $\R^{2 \times 2}$ valued martingale whose stochastic exponential $S$ satisfies
\begin{align} \label{commute}
    S  = \Phi(\mathcal{E}(N)) = \Phi(\exp{N_t - \frac{1}{2} \langle N \rangle_t}). 
\end{align}
That is, the stochastic exponential commutes with the isomorphism $\Phi$. 
This trick gives a way to compute $\mathcal{E}(M)$ when $M = \Phi(N)$ for a complex valued martingale $N$.In particular, if $B$ is a one-dimensional Brownian motion, $\tau = \inf \{t \geq 0 : |B_t| = \frac{\pi}{2}\}$, and $N = i B^{\tau}$, then we can compute explicitly 
\begin{align} \label{ncomp}
    \mathcal{E}(N) = \exp{iB^{\tau} + \frac{1}{2} \langle B^{\tau} \rangle}.
\end{align}
Setting $M = \Phi(N) = \begin{pmatrix}
0 & B^{\tau} \\
-B^{\tau} & 0, 
\end{pmatrix}$
the commutativity relation \eqref{commute} together with \eqref{ncomp} reveals 
\begin{align} \label{sformula}
    S_t = \exp{\frac{\tau \wedge t}{2}} \begin{pmatrix}
    \cos(B_{\tau \wedge t}) & \sin(B_{\tau \wedge t}) \\
    - \sin(B_{\tau \wedge t}) & \cos(B_{\tau \wedge t}), 
    \end{pmatrix} \nonumber \\
    S_{\infty} =    \exp{\frac{\tau}{2}} \begin{pmatrix} 
    0 & \sin(B_{\tau}) \\
    - \sin(B_{\tau}) & 0. 
    \end{pmatrix}
\end{align}
In particular, since $S_0$ is the identity and $S_{\infty}$ is zero on the diagonal, we can not have $S_0 = \bE[S_{\infty}]$, so $S$ is not a uniformly integrable martingale. A time change argument lets us produce an example in which $S$ is a strict local martingale.

In fact, more can be said. It follows from the proof of Theorem 1.7 in \cite{Kazamaki} that $\bE[|S_{\infty}|] = \bE[\exp{\frac{\tau}{2}}] = \infty$. In particular, $S$ does not satisfy $(R_1)$. 

\end{example} 

\'Emery's example demonstrates that if $M \in \BMO(\R^{n \times n})$, then in general $S$ may be a strict local martingale, and need not satisfy any $\rp$. Because the reverse H\"older inequality in dimension one (Theorem \ref{thm:1drh}) is instrumental in proving the well-posedness of \eqref{lin-bsde-2} in dimension one (Theorem \ref{thm:1dwellposed}), \'Emery's example suggests that equation \eqref{lin-bsde-2} may be ill-posed when $n > 1$. Indeed, in example 2.3 of \cite{jackson2021existence} a matrix $A \in \bmo(\R^{2 \times 2})$ is constructed such that the equation 
\begin{align*}
    Y = \int_{\cdot}^T A Z dt - \int_{\cdot}^T Z dB 
\end{align*}
has a nonzero solution. Thus uniqueness for $\eqref{lin-bsde-2}$ can fail when $n > 1$.

In the remainder of this section, we produce an $A \in \bmo(\R^{2 \times 2})$ and a $\xi \in \linf$ such that \eqref{lin-bsde-2} has no solution $(Y,\kZ) \in \sinf \times \bmo$. That is, we show that existence may also fail.

\begin{example}
Let $d = 1$, and define a local martingale $N$ on $[0,T)$ by 
\begin{align*}
    N_t = \int_0^t f(s) dB_s, \\
    f(s) = \begin{cases}
    0 & 0 \leq s \leq T/2 \\
    \frac{1}{T - s} & T/2 \leq s < T
    \end{cases}
\end{align*}
So, $N = 0$ on $[0,T/2]$, and on $[T/2, T)$, $N$ is the image of a Brownian motion on $[0,\infty)$ under a deterministic time change. Next, choose a sequence of numbers $\{b_k\}_{k \in \N}$ with the following properties
\begin{enumerate}
    \item $0 \leq b_k < \frac{\pi}{2}$, $b_k \uparrow \frac{\pi}{2}$, 
    \item $\sum_{k = 1}^{\infty} \frac{1}{2^k \cos(b_k)} = \infty$,
    \item $\frac{1}{2^k \cos(b_k)} \to 0$ as $k\to \infty$. 
\end{enumerate}
Let $\{A_k\}$ be a partition of $\sF_{T/2}$ such that $\bP[A_k] = \frac{1}{2^k}$. Set $\tau_k = \inf \{t \geq T/2 : |N_t| = b_k\} \wedge T$, and $\tau = \sum_k 1_{A_k} \tau_k$. Then $\tau$ is a stopping time, and we can define
\begin{align*}
    M = \Phi(iN^{\tau}) = \begin{pmatrix}
    0 & N^{\tau} \\
    -N^{\tau} & 0
    \end{pmatrix}.
\end{align*}
As in Kazamaki's example, we can compute the stochastic exponential $S$ of $M$ explicitly as 
\begin{align*}
    S_t = \exp{\frac{1}{2}\int_0^{\tau \wedge t} |f(s)|^2 ds} \begin{pmatrix} \cos(N_{\tau \wedge t}) & \sin(N_{\tau \wedge t}) & \\
    - \sin(N_{\tau \wedge t}) & \cos(N_{\tau \wedge t})
    \end{pmatrix}
\end{align*}
Now, let $A$ be such that $M = \int A dB$. Since $M$ is bounded, $A \in \bmo(\R^{2 \times 2})$. Set $\xi = (\cos(N_{\tau}), \sin(N_{\tau}))^{T} \in \linf(\sF_T; \R^2)$. We will now show that the equation 
\begin{align*}
    Y = \xi - \int_{\cdot}^T A Z dt - \int_{\cdot}^T Z dB
\end{align*}
has no solution $(Y,Z) \in \sinf \times \bmo$. Indeed, suppose $(Y,Z)$ is such a solution. A time change argument, together with Lemma 1.3 of \cite{Kazamaki} shows that 
\begin{align}  \label{tauj}
    \bE[\exp{\frac{1}{2} \int_0^{\tau_j} f^2(s)ds}] = \frac{1}{\cos{b_j}}
\end{align}
Set $\sigma_j = \tau \wedge \tau_j$. The first component of $S_{\sigma_j} \xi$ is given by 
\begin{align}
    \label{firstcomp} \exp{\frac{1}{2} \int_0^{\sigma_j}|f(s)|^2 ds} \big(\cos(N_{\tau \wedge \tau_j}), \sin(N_{\tau \wedge \tau_j})\big) \big(\cos(N_{\tau}), \sin(N_{\tau}) \big)^T, 
\end{align}
which is greater than $\frac{1}{2} \exp{\frac{1}{2} \int_0^{\sigma_j} |f(s)|^2 ds }$ as soon as $j$ is sufficiently large. Notice that 
\begin{align*}
    \bE[\exp{\frac{1}{2} \int_0^{\sigma_j} |f(s)|^2 ds}] \geq \bE[\sum_{k = 1}^j 1_{A_j} \exp{\frac{1}{2}\int_0^{\tau_k} |f(s)|^2 ds}] \\ = \sum_{k = 1}^j \bP[1_{A_j}] \bE[\exp{\frac{1}{2}\int_0^{\tau_k} |f(s)|^2 ds}] 
    = \sum_{k = 1}^j \frac{1}{2^k \cos(b_k)},
\end{align*}
where the last equality comes from \eqref{tauj}. Since $\sum_{k = 1}^{\infty} \frac{1}{2^k \cos(b_k)} = \infty$, we conclude from \eqref{firstcomp} that $|\bE[S_{\sigma_j}\xi]| \to \infty$ as $j \to \infty$.

An application of It\^o's formula shows that $SY$ is local martingale, and it is easy to show that for each $j$, $S^{\sigma_j}$ is a true martingale, and hence so is $S^{\sigma_j}Y^{\sigma_j}$. In particular, we have

\begin{align} \label{y0comp}
    Y_0 = \bE[S_{\sigma_j} Y_{\sigma_j}]. 
\end{align}
But we also have 
\begin{align*}
    |Y_0 - \bE[S_{\sigma_j} \xi]| = |\bE[S_{\sigma_j}(Y_{\sigma_j} - \xi)]|
    \leqc \bE[|S_{\tau_j}|1_{\tau_j < \tau}]  
    \\
    = \bE[\exp{\frac{1}{2} \int_0^{\tau_j \wedge \tau} |f(s)|^2 ds} 1_{\cup_{k > j} A_k}] \\
    \leq \bE[\exp{\frac{1}{2} \int_0^{\tau_j} |f(s)|^2 ds} 1_{\cup_{k > j} A_k}] \\
    = \bE[\exp{\frac{1}{2} \int_0^{\tau_j} |f(s)|^2 ds}]
    \sum_{k = j + 1}^{\infty} \frac{1}{2^k} 
\leq  \frac{1}{2^j\cos(b_j)} ,
\end{align*}
which tends to zero as $j \to \infty$. Since $Y_0$ is fixed and finite, while $|\bE[S_{\sigma_j}\xi]| \to \infty$ as $j \to \infty$, we have reached a contradiction, and conclude that no solution $(Y,Z) \in \sinf \times \bmo$ can exist. 

\end{example}

\section{Sufficient Conditions for Existence and Uniqueness} \label{sec:existunique}

In this section, we show how existence and uniqueness for \eqref{lin-bsde-1} can be inferred from properties of $S$.

\subsection{Sufficient Conditions for Uniqueness}

In this sub-section, we give sufficient conditions for uniqueness to hold for \eqref{lin-bsde-1}, in terms of the exponential $S$. To be precise, we make the following definitions.
\begin{definition} 
We say that \textbf{uniqueness in $\sinf$} holds for BSDE($\kA$) if for each $(\xi, \beta) \in \linf \times \loi$, there is at most one solution $(Y,\kZ) \in \sinf \times \bmo$ to \eqref{lin-bsde-1}. For $1 < q < \infty$, we say \textbf{uniqueness in $\sque$} holds for BSDE($\kA$) if for each $(\xi, \beta) \in \lque \times \loq$, there is at most one solution $(Y,\kZ) \in \sque \times \ltq$ to \eqref{lin-bsde-1}. We define uniqueness in $\sque$ for HBSDE($\kA$) analogously. 
\end{definition}

As motivation for the arguments below, first consider the case $n = 1$. Then any solution $(Y,\kZ)$ to \eqref{lin-bsde-1} satisfies 
\begin{align*}
    Y_t = \xi + \int_t^T \beta_s ds - \int_t^T \kZ_s d\kB^{\kA}, 
\end{align*}
where
\begin{align*}
    \kB^{\kA} = \kB - \int \kA dt.
\end{align*}
Furthermore, by Theorem \ref{thm:1drh}, $S = \mathcal{E}(\int \kA d\kB)$ is a true martingale, and by Girsanov's Theorem $Y$ is a local martingale martingale under the probability measure $\bP^{\kA}$, defined by 
\begin{align*}
    \frac{d\bP^{\kA}}{d\bP} = S_T.
\end{align*}
If we can verify that $Y$ is a true martingale under $\bP^{\kA}$, it would follow that
\begin{align*}
    Y_t = \bE_t^{\kA}[\xi + \int_t^T \beta_s ds] = S_t^{-1}\bE_t[S_T \big(\xi + \int_t^T \beta_s ds\big)],
\end{align*}
where $\bE_t^{\kA}[\cdot]$ denotes conditional expectation with respect to the measure $\bP^{\kA}$. This representation formula immediately implies uniqueness. In higher dimensions, the change-of-measure technique is no longer available (unless the matrix $\kA$ is diagonal), but the representation formula still holds, as long as we have enough regularity on $S$. This is the strategy we take to prove uniqueness. 

We begin with two lemmas. The first is Lemma 2.20 of \cite{jackson2021existence}, and the second is an easy application of It\^o's Lemma. 

\begin{lemma} \label{lem:sinv}
If $\kA \in \bmo$, then for each $t \in [0,T]$, $S_t$ is invertible a.s. 
\end{lemma}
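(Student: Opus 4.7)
The plan is to exhibit an explicit candidate for $S_t^{-1}$ by solving a companion matrix SDE and then verifying via It\^o's formula that the product equals the identity. This mimics the scalar identity $\mathcal{E}(M)^{-1} = \mathcal{E}(-M + \langle M \rangle)$, with the usual caveats about non-commutativity of matrix products.

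Concretely, I would first define $T$ as the unique solution to the matrix SDE of the form
\begin{equation*}
    dT_t = -\kA_t T_t \, d\kB_t + Q_t T_t \, dt, \qquad T_0 = I_{n \times n},
\end{equation*}
where $Q_t$ is a locally bounded, $\sP$-valued bounded-variation correction built out of quadratic expressions in $\kA$ (in the one-dimensional Brownian case, $Q = \kA \kA$; in the general case, sum over the $d$ components of the Brownian motion using the conventions from the notations section). Existence and uniqueness of $T$ on $[0,T]$ follow from the same kind of argument as for $S$ itself (\cite{Pro04} Theorem 3.7), since $\kA \in \bmo$ is in particular path-integrable in the It\^o sense.

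Next, apply It\^o's product formula to $U_t := S_t T_t$. The differentials give $dU_t = (dS_t) T_t + S_t (dT_t) + d[S,T]_t$. The stochastic integrals against $d\kB_t$ cancel exactly because the coefficients $S_t \kA_t T_t$ appear with opposite signs. The cross-variation term $d[S,T]_t$ produces a finite-variation piece whose value is, coordinate by coordinate, a quadratic expression in $\kA$ conjugated by $S_t$ and $T_t$; the whole point of choosing $Q$ as above is that $S_t Q_t T_t \, dt + d[S,T]_t = 0$. Hence $dU_t = 0$, so $U_t \equiv U_0 = I_{n \times n}$ for all $t$. A symmetric computation (or the general fact that a right inverse of a square matrix is also a left inverse) shows $T_t S_t = I_{n \times n}$ as well.

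The main technical obstacle is precisely the bookkeeping in the cross-variation computation: one must track the placement of $\kA$ on the left vs.\ right of $T$ and $S$, and verify that the correction $Q$ cancels all Stratonovich-type corrections coming from $d[S,T]_t$ in the correct order. Once $Q$ is identified this reduces to routine (though not entirely painless) index manipulations using the conventions in the notations section. The hypothesis $\kA \in \bmo$ enters only to ensure pathwise integrability and that the corresponding SDE for $T$ is well-posed on $[0,T]$; no boundedness or positivity of $S$ is needed.
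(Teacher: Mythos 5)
Your proof is correct and is essentially the argument the paper relies on: the paper cites Lemma 2.20 of \cite{jackson2021existence} rather than proving the statement, but the companion SDE you construct is exactly the one displayed in \eqref{sinv}, namely $dX = \kA^2 X\,dt - \kA X\,d\kB$, $X_0 = I_{n\times n}$, with $Q=\kA^2$ given by $(\kA^2)^i_j=\sum_k \kA^i_k\cdot\kA^k_j$, and the It\^o product computation showing $S_tX_t\equiv I_{n\times n}$ goes through as you describe.
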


\begin{lemma} \label{lem: sylocalmart}
If $(Y,\kZ)$ solves \eqref{lin-bsde-2}, then the process $S\big(Y + \int_0^{\cdot} \beta dt\big)$ is a local martingale. 
\end{lemma}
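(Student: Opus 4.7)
The plan is to apply the It\^o product rule componentwise. I first note that the referenced equation label is almost certainly a typographical slip: the presence of $\beta$ in the conclusion forces the hypothesis to be \eqref{lin-bsde-1} rather than the homogeneous \eqref{lin-bsde-2}. So I assume $(Y,\kZ)$ satisfies $dY = -(\kA \kZ + \beta)\, dt + \kZ\, d\kB$, and recall that by Definition~\ref{sdef} one has $dS = S \kA\, d\kB$, so $S$ is a matrix-valued local martingale with ``diffusion coefficient'' $S \kA$ and no drift.

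The strategy is to compute $d(SY)$ and then add the contribution from $d\bigl(S \tint_0^\cdot \beta\, ds\bigr)$, and then watch the finite-variation pieces cancel. Writing $(SY)^i = \sum_j S^i_j Y^j$ and applying integration by parts to each scalar product, the drift of $(SY)^i$ has two contributions: from $dY$ one obtains $-\sum_j S^i_j \bigl((\kA \kZ)^j + \beta^j\bigr) dt$, and from the quadratic covariations $d[S^i_j, Y^j]$ summed over $j$ one obtains $\sum_{j,k} S^i_k \kA^k_j \cdot \kZ^j\, dt$, which after reindexing equals $\sum_j S^i_j (\kA \kZ)^j dt$. Hence the $(\kA \kZ)$-terms cancel, leaving the drift $-(S\beta)^i dt$ alongside local martingale terms coming from $\sum_j dS^i_j Y^j + \sum_j S^i_j \kZ^j \cdot d\kB$.

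For the second piece, since $\tint_0^\cdot \beta\, ds$ has finite variation, the product rule gives no covariation term and yields $dS \cdot \tint_0^\cdot \beta\, ds + S \beta\, dt$. The $+ S \beta\, dt$ here exactly cancels the $- S \beta\, dt$ isolated above, so that overall
\[ d\bigl(S(Y + \tint_0^\cdot \beta\, ds)\bigr) \;=\; dS \cdot \bigl(Y + \tint_0^\cdot \beta\, ds\bigr) + S \kZ\, d\kB, \]
which is manifestly the differential of a local martingale.

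The main (and essentially only) obstacle is the index bookkeeping mandated by the paper's conventions for processes valued in $(\R^d)^n$ and $(\R^d)^{n \times n}$; but once one writes out a single scalar component $(SY)^i$ and notes the componentwise covariation identity, the cancellations are mechanical. No use of the $\bmo$ hypothesis on $\kA$ or of Lemma~\ref{lem:sinv} is required here; the result follows from It\^o's formula alone.
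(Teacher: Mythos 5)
Your proof is correct and is exactly the ``easy application of It\^o's Lemma'' the paper has in mind (the paper omits the proof entirely): the quadratic covariation $d[S,Y]$ cancels the $-S\kA\kZ\,dt$ drift, and the $S\beta\,dt$ from the finite-variation product cancels the remaining drift. You are also right that the reference to \eqref{lin-bsde-2} is a typographical slip for \eqref{lin-bsde-1}, as confirmed by how the lemma is invoked in the proof of Proposition \ref{pro: yrep1}.
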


With these lemmas in hand, we can give sufficient conditions for uniqueness. 

\begin{proposition} \label{pro: yrep1}
  Suppose that $S$ is a true martingale, $(\xi, \beta) \in \linf \times \loi$, and $(Y,\kZ) \in \sinf \times \bmo$ solves \eqref{lin-bsde-1}. Then, for each $t$,we have 
  \begin{align*}
      Y_t = S_t^{-1} \bE_t[S_T\big(\xi + \int_0^{\cdot} \beta dt\big)]. 
  \end{align*}
  In particular, uniqueness in $\sinf$ holds for \eqref{lin-bsde-1}. 
\end{proposition}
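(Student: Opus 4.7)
The plan is to run the one-dimensional argument sketched in the motivation but to replace the change-of-measure step (which is unavailable for matrix $\kA$) with a direct martingale-representation argument based on Lemmas \ref{lem:sinv} and \ref{lem: sylocalmart}. Concretely, set
\[
    N \coloneqq S\Big(Y + \int_0^{\cdot} \beta\,dt\Big).
\]
Lemma \ref{lem: sylocalmart} tells us $N$ is a continuous local martingale, and if we can upgrade it to a true martingale then the identity $S_t(Y_t + \int_0^t \beta\,ds) = N_t = \bE_t[N_T]$, combined with the pathwise invertibility of $S_t$ from Lemma \ref{lem:sinv} and the $\sF_t$-measurability of $\int_0^t \beta\,ds$, yields the desired representation
\[
    Y_t = S_t^{-1}\,\bE_t\Big[S_T\Big(\xi + \int_t^T \beta_s\,ds\Big)\Big].
\]

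The heart of the matter is therefore upgrading $N$ from a local to a true martingale, and this is where the hypotheses on $(Y,\kZ)$ and on $S$ enter. First, since $Y \in \sinf$ and $\beta \in \loi$, the process $Y + \int_0^{\cdot}\beta\,ds$ is essentially bounded by a universal constant $C_0 \coloneqq \norm{Y}_{\sinf} + \norm{\beta}_{\loi}$; hence $|N_t| \le C_0 |S_t|$ for every $t$. Second, the hypothesis that $S$ is a true (matrix) martingale on $[0,T]$ gives, by Jensen's inequality applied to the operator norm, $|S_\tau| \le \bE_\tau[|S_T|]$ for every stopping time $\tau \le T$; as $|S_T| \in \lone$, the family $\{\bE_\tau[|S_T|] : \tau \text{ stopping time}\}$ is uniformly integrable, and therefore so is $\{N_\tau\}$. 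A continuous local martingale on $[0,T]$ of class $D$ is a true martingale, and we conclude $N_t = \bE_t[N_T]$.

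With the representation formula in hand, uniqueness in $\sinf$ is essentially automatic: if $(Y,\kZ)$ and $(Y',\kZ')$ are two solutions in $\sinf \times \bmo$, both are represented by the same right-hand side, so $Y = Y'$; then subtracting the two BSDEs and using Burkholder--Davis--Gundy on the resulting stochastic integral forces $\kZ = \kZ'$. The main (and really only) obstacle in the argument is the upgrade from local to true martingale — but since $S$ has already been assumed to be a true martingale and $Y + \int_0^\cdot \beta\,dt$ is bounded, the class-$D$ domination just described handles this cleanly, without requiring any higher integrability of $S$.
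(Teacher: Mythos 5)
Your proposal is correct and follows essentially the same route as the paper: both form the process $S\big(Y+\int_0^{\cdot}\beta\,dt\big)$, invoke Lemma \ref{lem: sylocalmart} to see it is a local martingale, use the boundedness of $Y+\int_0^{\cdot}\beta\,dt$ together with the true-martingale property of $S$ to upgrade it to a true martingale (the paper says ``class (DL)'' where you give the class-(D) uniform-integrability argument in more detail), and then invert $S_t$ via Lemma \ref{lem:sinv} and use adaptedness of $\int_0^t\beta\,ds$ to extract the representation. The extra detail you supply on uniform integrability and on deducing $\kZ=\kZ'$ is consistent with, and slightly more explicit than, what the paper records.
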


\begin{proof}
From Lemma \ref{lem: sylocalmart}, $S\big(Y + \int_0^{\cdot} \beta dt\big)$ is a local martingale. Since $S$ is a true martingale, $Y \in \sinf$, and $\int_0^{\cdot} \beta dt \in \sinf$, $S\big(Y + \int_0^{\cdot} \beta dt\big)$ is of class (DL), hence a true martingale. Thus for each $t$, we have 
\begin{align*}
S_t\big(Y_t + \int_0^t \beta_u du\big) = \bE_t[ S_T\big(\xi + \int_0^T \beta_u du\big)]. 
\end{align*}
Since $S_t$ is invertible by Lemma \ref{lem:sinv}, we get 
\begin{align*}
    Y_t + \int_0^t \beta_u du = S_t^{-1} \bE_t[S_T(\xi + \int_t^T \beta_u du\big)] + S_t^{-1} \bE_t[S_T\int_0^t \beta_u du] \\
    = \bE_t[S_T(\xi + \int_t^T \beta_u du\big)] + S_t^{-1} \bE_t[S_T] \int_0^t \beta_u du \\ = \bE_t[S_T(\xi + \int_t^T \beta_u du\big)] + \int_0^t \beta_u du, 
\end{align*}
where the last line uses the adaptedness of $\beta$ and the fact that $S$ is a martingale. The result follows. 

\end{proof}

\begin{proposition} \label{pro:yrep}
  Suppose that $S \in \mpee$ for some $p > 1$, and $(\xi, \beta) \in \sque \times \loq$, where $q$ be the conjugate of $p$. Then if  $(Y,\kZ) \in \sque \times \ltq$ solves \eqref{lin-bsde-2}, then for each $t$, we have
  \begin{align*}
      Y_t = S_t^{-1}\bE_t[S_T\big(\xi + \int_0^t \beta_u du\big)]. 
  \end{align*}
  In particular, uniqueness in $\sque$ holds for BSDE($\kA$). 
\end{proposition}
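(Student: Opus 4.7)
The plan is to follow the same outline as the proof of Proposition \ref{pro: yrep1}, replacing the $\linf$-type domination by a H\"older-inequality argument that exploits the fact that $p$ and $q$ are conjugate. By Lemma \ref{lem: sylocalmart}, the process $M \coloneqq S\big(Y + \int_0^{\cdot} \beta_u du\big)$ is a local martingale. The heart of the proof is to upgrade $M$ to a uniformly integrable martingale, so that the identity $M_t = \bE_t[M_T]$ can be combined with the invertibility of $S_t$ (Lemma \ref{lem:sinv}) to produce the representation formula and hence uniqueness.

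To obtain uniform integrability, I would dominate $\sup_t |M_t|$ in $\lone$. On the $\sque$ side, the triangle inequality gives
\[
    \sup_{0 \leq t \leq T}\Big|Y_t + \int_0^t \beta_u du\Big| \leq \sup_{0 \leq t \leq T}|Y_t| + \int_0^T |\beta_u|du,
\]
whose $\lque$ norm is controlled by $\norm{Y}_{\sque} + \norm{\beta}_{\loq}$. On the $\mpee$ side, Doob's $L^p$ maximal inequality (applicable since $S \in \mpee$ is, in particular, a uniformly integrable $\lpee$-bounded martingale) yields $\sup_t|S_t| \in \lpee$. Because $p$ and $q$ are conjugate, H\"older's inequality then gives
\[
    \bE\Big[\sup_{0 \leq t \leq T}|M_t|\Big] \leq \Big\|\sup_t|S_t|\Big\|_{\lpee}\,\Big\|\sup_t |Y_t + \textstyle\int_0^t \beta_u du|\Big\|_{\lque} < \infty,
\]
so $M$ is dominated by an integrable random variable and therefore a uniformly integrable martingale.

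Once $M$ is a true martingale, the equation $S_t\big(Y_t + \int_0^t \beta_u du\big) = \bE_t\big[S_T\big(\xi + \int_0^T \beta_u du\big)\big]$ holds, and multiplying by $S_t^{-1}$ and using the martingale identity $\bE_t[S_T] = S_t$ together with the $\sF_t$-measurability of $\int_0^t \beta_u du$ cancels the contribution of the integral on $[0,t]$, exactly as in the proof of Proposition \ref{pro: yrep1}, yielding
\[
    Y_t = S_t^{-1}\bE_t\Big[S_T\Big(\xi + \int_t^T \beta_u du\Big)\Big].
\]
Since the right-hand side depends only on $(S, \xi, \beta)$ and not on the particular solution, uniqueness in $\sque$ for BSDE($\kA$) follows immediately. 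The only non-routine step is the uniform-integrability argument above, and even that reduces to a single application of H\"older combined with Doob, so I do not anticipate a genuine obstacle; the proposition is essentially the $\sque$ transcription of Proposition \ref{pro: yrep1} under the minimal integrability on $S$ which makes the exponents balance.
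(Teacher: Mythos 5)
Your proposal is correct and follows essentially the same route as the paper: the paper's proof is precisely the bound $\bE[\sup_t |S_t(Y_t + \int_0^t \beta_u du)|] \leq \norm{S}_{\spee}(\norm{Y}_{\sque} + \norm{\beta}_{\loq})$, i.e.\ Doob's maximal inequality for $S \in \mpee$ combined with H\"older for the conjugate pair $(p,q)$, after which the class~(D) martingale property and the cancellation of $\int_0^t \beta_u du$ proceed exactly as in Proposition~\ref{pro: yrep1}. Your write-up is, if anything, more explicit than the paper's one-line argument, and you correctly arrive at the representation with $\int_t^T \beta_u du$ (the $\int_0^t$ in the stated display is a typo in the statement).
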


\begin{proof}

If $Y \in \sque$, and $S \in \mpee$, then 
\begin{align*}
    \bE[\sup_{0 \leq t \leq T} |S_t\big(Y_t + \int_0^t \beta_u du\big)|] \leq \norm{S}_{\spee} \big(\norm{Y}_{\sque} + \norm{\beta}_{\loq}\big) < \infty.
\end{align*}
Thus $S\big(Y + \int_0^{\cdot} \beta dt\big)$ is a true martingale, and the result follows as in the proof of Proposition \ref{pro: yrep1}. 
\end{proof}

\subsection{Sufficient Conditions for Existence}

In the previous section, we saw that uniqueness for \eqref{lin-bsde-1} is related to \textit{regularity} of $S$, but does not actually require $S$ to satisfy any reverse H\"older inequality. For existence, however, we will need to verify that the candidate solution $Y_t = S_t^{-1} \bE[S_T(\xi + \int_0^{t} \beta_u du\big)]$ is sufficiently regular, and so we will need $S$ to satisfy a reverse H\"older inequality.

\begin{remark} \label{rmk:errors}
We note that the condition for existence derived here are similar in spirit to Theorem 3.2 of \cite{Delbaen-Tang}. There are two minor errors in their paper; however, which affect some of the results in Section 3. Firstly, the authors use the inequality
\begin{align*}
    \bE_{\tau}[|S_T|^p] \leqc |S_{\tau}|^p
\end{align*}
in Definition 3.1 to define the reverse H\"older inequality, but then use the inequality \eqref{rhintro} (which is stronger when $n > 1$) in the proof of Theorem 3.2 and elsewhere. There also seems to be a technical issue in the proof of Theorem 3.2, namely the proof shows how to build a solution $(\tilde{Y}, Z)$ with $\tilde{Y}$ defined by (3.7), and shows that this solution satisfies the estimate in (3.6). It is then stated (in the last line of the proof) that uniqueness follows. It is not explained, however, how to rule out the possibility of the existence of another solution $Y$ which does not satisfy the representation (3.7) or the estimate (3.6). We could not find a way to complete this argument without assuming that $S$ is a true martingale (and in this case Proposition \ref{pro:yrep} gives a simpler argument for uniqueness). 
\newline \newline
Because of these subtle issues and because we need to extend the results of \cite{Delbaen-Tang} in various ways in order to apply them to quadratic BSDEs in Section \ref{sec:qbsde}, we give a full treatment of existence via reverse H\"older, even though the main ideas come from Section 3 of \cite{Delbaen-Tang}.
\end{remark}

\begin{definition}
We say that \textbf{existence in $\sinf$} holds for BSDE($\kA$) if for each $(\xi, \beta) \in \sinf \times \loi$, there exists a solution $(Y,\kZ) \in \sinf \times \bmo$ satisfying 
\begin{align} \label{sinfexistdef}
    \norm{Y}_{\sinf} \times \norm{\kZ}_{\bmo} \leqc \norm{\xi}_{\linf} + \norm{\beta}_{\loi} \, \, \const{\kA}. 
\end{align}
We say that \textbf{strong existence in $\sinf$} holds for BSDE($\kA$) if for each $(\xi, \beta) \in \sinf \times \bmoh$, there exists a solution $(Y,\kZ) \in \sinf \times \bmo$ satisfying 
\begin{align} \label{sinfexistsdef}
    \norm{Y}_{\sinf} \times \norm{\kZ}_{\bmo} \leqc \norm{\xi}_{\linf} + \norm{\beta}_{\bmoh} \, \, \const{\kA}. 
\end{align}
We say that \textbf{existence in $\sque$} holds for BSDE($\kA$) if for each $(\xi, \beta) \in \lque \times \loq$, there exists a solution $(Y,\kZ) \in \sque \times \ltq$ satisfying 
\begin{align} \label{sqexistdef}
    \norm{Y}_{\sque} + \norm{\kZ}_{\ltq} \leq_{C_q} \norm{\xi}_{\sque} + \norm{\beta}_{\ltq} \, \, C_q = C_q(\kA).
\end{align}
We define existence in $\sque$ for HBSDE($\kA$) similarly. 
\end{definition}

\begin{proposition} \label{pro:linfexisth}
Suppose that $S$ satisfies $\ro$. Then existence in $\sinf$ holds for HBSDE($\kA$). In particular, for each $\xi \in \linf$, there is a solution $(Y,\kZ)$ satisfying 
\begin{enumerate}
    \item $Y_t = S_t^{-1}\bE[S_T \xi] \text{ for each } t \in [0,T]$
    \item $\norm{Y}_{\sinf} \leq_{\roc(S)}\norm{\xi}_{\linf}$
    \item $\norm{\kZ}_{\bmo} \leqc \norm{\xi}_{\linf}, \, \, \const{\kA, \roc(S)}$. 
\end{enumerate}

\end{proposition}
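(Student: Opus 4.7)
The plan is to construct the candidate solution explicitly from the formula in (1) and then verify the required estimates. First, applying $\ro$ at $\tau = 0$ with $S_0 = I$ yields $\bE[|S_T|] \leq \roc(S)$, so $S_T \xi \in \lone$ and the continuous martingale $N_t := \bE_t[S_T \xi]$ is well-defined. By the Brownian martingale representation theorem, $N = N_0 + \int_0^{\cdot} \phi \, d\kB$ for some predictable $\phi$. Lemma \ref{lem:sinv} then justifies setting $Y_t := S_t^{-1} N_t$ (continuous, with $Y_T = \xi$ since $N_T = S_T \xi$) and $\kZ_t := S_t^{-1} \phi_t - \kA_t Y_t$, so (1) holds by construction.

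Verifying that this pair satisfies HBSDE$(\kA)$ is a standard It\^o exercise: writing $dY = \mu \, dt + \sigma \, d\kB$ and expanding $d(SY) = dS \cdot Y + S \, dY + d[S, Y]$, the identity $d(SY) = \phi \, d\kB$ combined with $dS = S \kA \, d\kB$ forces $\sigma = S^{-1}\phi - \kA Y = \kZ$ on the martingale part and $\mu = -\kA \kZ$ after drift cancellation. For the $\sinf$ bound on $Y$, note that $S_t^{-1}$ is $\sF_t$-measurable, so $Y_t = \bE_t[S_t^{-1} S_T \xi]$, yielding $|Y_t| \leq \norm{\xi}_{\linf} \bE_t[|S_t^{-1}S_T|] \leq \roc(S) \norm{\xi}_{\linf}$ a.s.\ for each fixed $t$; continuity of $Y$ together with a countable dense set of times upgrades this to the pathwise estimate (2).

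The main obstacle is the $\bmo$ estimate (3), which has a circular flavor: bounding $\norm{\kZ}_{\bmo}$ requires knowing that $\int Y \cdot \kZ \, d\kB$ is a true martingale, which itself is essentially the bound we are after. The remedy is localization. Set $\sigma_n := \inf\{t : \int_0^t |\kZ_s|^2 \, ds \geq n\} \wedge T$, so that $\int Y \cdot \kZ \, d\kB$ is a genuine martingale on $[0, T \wedge \sigma_n]$. Applying It\^o to $|Y|^2$ between an arbitrary stopping time $\tau$ and $T \wedge \sigma_n$, taking $\bE_\tau[\cdot]$, and using Cauchy-Schwarz on the cross term $\bE_\tau[\int_\tau^{T\wedge\sigma_n} Y \cdot \kA \kZ \, dt]$ yields
\[
K_n := \sup_\tau \norm{\bE_\tau\bigl[\int_\tau^{T \wedge \sigma_n} |\kZ_t|^2 \, dt\bigr]}_{\linf} \leq \norm{\xi}_{\linf}^2 + 2\norm{Y}_{\sinf} \norm{\kA}_{\bmo} \sqrt{K_n}.
\]
Since $K_n \leq n < \infty$ a priori, Young's inequality absorbs $\sqrt{K_n}$ into the left side, giving $K_n \leq C(\kA, \roc(S)) \norm{\xi}_{\linf}^2$ uniformly in $n$. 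Monotone convergence then passes this bound to $\norm{\kZ}_{\bmo}^2 = \lim_n K_n$, completing (3).
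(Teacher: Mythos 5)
Your proposal is correct and follows essentially the same route as the paper: define $Y_t = S_t^{-1}\bE_t[S_T\xi]$, extract $\kZ$ via martingale representation and It\^o, bound $Y$ pointwise by $\ro$, and obtain the $\bmo$ bound by applying It\^o to $|Y|^2$ and absorbing the cross term with Cauchy--Schwarz and Young. Your localization via $\sigma_n$ is in fact spelled out more carefully than in the paper (which defers it to a closing remark); the only cosmetic slip is that the terminal term after stopping at $T\wedge\sigma_n$ should be $\norm{Y}_{\sinf}^2$ rather than $\norm{\xi}_{\linf}^2$, which is harmless since the constant may depend on $\roc(S)$.
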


\begin{proof}
Set $Y_t  = S_t^{-1}\bE_t[S_T\xi]$ and notice that
\begin{align*}
    |Y_t| \leq \roc(S)\norm{\xi}_{\linf}
\end{align*}
It follows that $\norm{Y}_{\sinf} \leq \roc(S) \norm{\xi}_{\linf}$. The process $S^{-1}$ has the dynamics
\begin{align} \label{sinv}
    dS^{-1} = \kA^2 S^{-1} dt - \kA S^{-1} d\kB,
\end{align}
or more precisely the matrix SDE 
\begin{align*}
    dX = \kA^2 X dt - \kA X d\kB, \, \, X_0 = I_{n \times n}
\end{align*}
has a unique solution which satisfies $X_t = S_t^{-1}$ a.s. for each $t$, i.e. we may choose a version of $S^{-1}$ satisfying \eqref{sinv}. Now let $\tilde{\kZ} \in \lto$ be such that 
\begin{align*}
    \bE_t[S_T \xi] = \int_0^t \tilde{\kZ} d\kB.
\end{align*}
Applying It\^o's formula and using \eqref{sinv}, shows that 
\begin{align*}
    dY = - \kA \kZ dt + \kZ d\kB,
\end{align*}
where $\kZ = - \kA S^{-1} \bE_{\cdot}[S_T\xi] + S^{-1} \tilde{\kZ} \in \lto_{loc}$. So far, we have shown that there exists $(Y,\kZ) \in \sinf \times \lto_{loc}$ satisfying \eqref{lin-bsde-2} and such that $\norm{Y}_{\sinf} \leq_{\roc(S)} \norm{\xi}_{\linf}$.
It remains to verify that $\kZ \in \bmo$, and estimate its norm. First, suppose that $\kZ \in \lto$, and notice that 
\begin{align*}
    |Y_t|^2 = |\xi|^2 + 2 \int_t^T Y_s \kA_s \kZ_s ds - \int_t^T |\kZ_s|^2 ds - 2 \int_t^T Y_s \kZ_s d\kB_s. 
\end{align*}
Since $\int Y \kZ d \kB$ is a true martingale, we conclude that 
\begin{align*}
    \bE_t[ \int_t^T |\kZ_s|^2 ds] \leq \norm{\xi}_{\linf}^2 + 2 \bE_t[\int_t^T |Y_s \kA_s \kZ_s| ds] \\
    \leq \norm{\xi}_{\linf}^2 + 2\norm{Y}_{\sinf}\bE_t[\int_t^T |\kA_s \kZ_s| ds] \\
    \leq \norm{\xi}_{\linf}^2 + 2\norm{Y}_{\sinf}\big(\bE_t[\int_t^T |\kA_s|^2 ds]\big)^{1/2}\big(\bE_t[\int_t^T |\kZ_s|^2 ds]\big)^{1/2} \\ 
    \leq  \norm{\xi}_{\linf}^2 + \norm{Y}^2_{\sinf}\bE_t[\int_t^T |\kA_s|^2 ds] + \frac{1}{2} \bE_t[\int_t^T |\kZ_s|^2 ds] \\
    \leq \norm{\xi}_{\linf}^2 + \norm{Y}_{\sinf}^2 \norm{\kA}^2_{\bmo} + \frac{1}{2} \bE_t[\int_t^T |\kZ_s|^2 ds].
\end{align*}
Thus 
\begin{align*}
    \frac{1}{2} \bE_t[ \int_t^T |\kZ_s|^2 ds] \leq \norm{\xi}_{\linf}^2 + \norm{Y}_{\sinf}^2 \norm{\kA}^2_{\bmo}
\end{align*}
That $\kZ \in \bmo$, as well as the desired estimate on $\norm{\kZ}_{\bmo}$ follows immediately from the estimate on $\norm{Y}_{\sinf}$. That in fact $\kZ \in \lto$ can be established using a localization argument, together with the a-priori estimate we have just established. 
\end{proof}

\begin{proposition} \label{pro:linfexist}
Suppose that $S$ satisfies $\ro$ and that $S$ is a true martingale. Then existence in $\sinf$ holds for BSDE($\kA$). In particular, for each $(\xi, \beta) \in \sinf \times \loi$, there is a solution $(Y,\kZ)$ satisfying
\begin{enumerate}
    \item $Y_t = S_t^{-1}\bE_t[S_T\big(\xi + \int_t^T \beta_u du] \text{ for each } t \in [0,T]$,
    \item $\norm{Y}_{\sinf} \leq_{\roc(S)} \norm{\xi}_{\linf} + \norm{\beta}_{\loi}$,
    \item  $\norm{Z}_{\bmo} \leqc \norm{\xi}_{\linf} + \norm{\beta}_{\loi}, \, \, \const{\norm{\kA}_{\bmo}, \roc(S)}$.
\end{enumerate}
\end{proposition}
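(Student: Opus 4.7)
The plan is to reduce to the homogeneous case already handled by Proposition \ref{pro:linfexisth} by absorbing the drift $\beta$ into the terminal condition. Given data $(\xi,\beta) \in \linf \times \loi$, set $\tilde\xi := \xi + \int_0^T \beta_u du$. Because $\beta \in \loi$ means $\int_0^T |\beta_u| du \leq \norm{\beta}_{\loi}$ almost surely, we have $\tilde\xi \in \linf$ with $\norm{\tilde\xi}_{\linf} \leq \norm{\xi}_{\linf} + \norm{\beta}_{\loi}$. A direct computation (add $\beta_t\,dt$ to the dynamics of $Y$) shows that $(Y,\kZ) \in \sinf \times \bmo$ solves BSDE($\kA$) with data $(\xi,\beta)$ if and only if $(\tilde Y,\kZ) := \bigl(Y + \int_0^{\cdot} \beta_u du,\,\kZ\bigr)$ solves HBSDE($\kA$) with terminal condition $\tilde\xi$.

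Applying Proposition \ref{pro:linfexisth} to HBSDE($\kA$) with terminal condition $\tilde\xi$ then produces $(\tilde Y,\kZ) \in \sinf \times \bmo$ satisfying $\tilde Y_t = S_t^{-1}\bE_t[S_T \tilde\xi]$, together with the $\sinf$ bound in terms of $\roc(S)\norm{\tilde\xi}_{\linf}$ and the $\bmo$ bound in terms of $\norm{\tilde\xi}_{\linf}$ (with constants depending on $\kA$ and $\roc(S)$). Setting $Y_t := \tilde Y_t - \int_0^t \beta_u du$ gives a solution to BSDE($\kA$), and the desired bounds (2)--(3) follow by the triangle inequality and the obvious $\norm{\int_0^{\cdot} \beta_u du}_{\sinf} \leq \norm{\beta}_{\loi}$.

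To obtain the representation formula (1), I would invoke that $S$ is a \emph{true} martingale --- this is the hypothesis absent from Proposition \ref{pro:linfexisth}. Because $\bE_t[S_T] = S_t$ and $\int_0^t \beta_u du$ is $\sF_t$-measurable,
\begin{align*}
S_t^{-1}\bE_t\Bigl[S_T \int_0^t \beta_u du\Bigr] = S_t^{-1} \bE_t[S_T] \int_0^t \beta_u du = \int_0^t \beta_u du,
\end{align*}
so splitting $\int_0^T = \int_0^t + \int_t^T$ yields
\begin{align*}
Y_t = S_t^{-1}\bE_t[S_T\tilde\xi] - \int_0^t \beta_u du = S_t^{-1}\bE_t\Bigl[S_T\Bigl(\xi + \int_t^T \beta_u du\Bigr)\Bigr],
\end{align*}
as claimed.

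I do not expect a substantive obstacle: the reduction is bookkeeping, and $\bE_t[S_T]=S_t$ is the only new input needed beyond Proposition \ref{pro:linfexisth}. The only place where minor care is warranted is keeping the $\sinf$ constant in the precise form $\leq_{\roc(S)}$ rather than accumulating extra factors from the reduction; the cleanest route is to read the bound directly off the representation (1), using $|Y_t| \leq \bE_t[|S_t^{-1}S_T|]\bigl(\norm{\xi}_{\linf} + \norm{\beta}_{\loi}\bigr) \leq \roc(S)\bigl(\norm{\xi}_{\linf} + \norm{\beta}_{\loi}\bigr)$, exactly as in the homogeneous proof.
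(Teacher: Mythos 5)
Your proof is correct, and it takes a genuinely different (if closely related) route from the paper's. The paper works directly with the candidate $Y_t = S_t^{-1}\bE_t[S_T(\xi + \int_t^T \beta_u\,du)]$, computes its dynamics via It\^o's formula and the SDE for $S^{-1}$, and uses the martingale property of $S$ \emph{inside} that computation to identify the drift term $S_t^{-1}\bE_t[S_T]\beta_t$ with $\beta_t$ (this is exactly the point of the remark following the proposition). You instead absorb the drift into the terminal condition via $\tilde Y = Y + \int_0^{\cdot}\beta_u\,du$, $\tilde\xi = \xi + \int_0^T\beta_u\,du$, reduce to Proposition \ref{pro:linfexisth}, and only invoke $\bE_t[S_T]=S_t$ at the end to convert $S_t^{-1}\bE_t[S_T\tilde\xi] - \int_0^t\beta_u\,du$ into the stated representation (1) and to read off the sharp constant in (2). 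What your route buys: no new It\^o computation is needed, and it makes transparent that existence for the inhomogeneous equation (with the representation $Y_t = S_t^{-1}\bE_t[S_T\tilde\xi] - \int_0^t\beta_u\,du$ and marginally worse constants) already follows from $\ro$ alone, the true-martingale hypothesis being used only to put the solution in the form (1). What the paper's route buys is that it generalizes verbatim to the $\sque$ settings of Propositions \ref{pro:lqexisth} and \ref{pro:lqexist}, where the same It\^o argument is reused. Your handling of the constant in (2) --- deriving the bound from the representation (1) rather than from the triangle inequality in the reduction --- is the right fix and matches the paper's first line.
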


\begin{proof}
Set $Y_t =  S_t^{-1}\bE_t[S_T \big(\xi + \int_t^T \beta_u du\big)]$ and notice that
\begin{align*}
    |Y_t| \leq \roc(S)\big( \norm{\xi}_{\linf} + \norm{\beta}_{\loi}\big).
\end{align*}
Now, let $\tilde{\kZ}$ be such that 
\begin{align*}
    \bE_t[S_T(\xi + \int_0^T \beta_s ds)] = \int_0^t \tilde{\kZ}_s d\kB_s
\end{align*}
It\^o's formula, together with the dynamics of $S^{-1}$ (given by \eqref{sinv}) show that 
\begin{align*}
    dY = -\big(\kA \kZ + S^{-1}\bE_{\cdot}[S_T \beta_{\cdot}]\big)dt + \kZ d\kB = - \big(\kA \kZ + \beta\big)dt + \kZ d\kB
\end{align*}, 
where $\kZ = - \kA S^{-1} \bE_{\cdot}[S_T \xi] + S^{-1} \tilde{\kZ}$, and the last equality we have used the fact that $S$ is a true martingale. It remains only to show that $\kZ \in \bmo$ and estimate $\norm{\kZ}_{\bmo}$, which can be done exactly as in the proof of Proposition \ref{pro:linfexist}. 
\end{proof}

\begin{remark}
An inspection of the above proof reveals why we need the assumption that $S$ is a true martingale in Proposition \ref{pro:linfexist} and not \ref{pro:linfexisth}. Namely, It\^o's formula shows that the candidate solution $Y = S^{-1}\bE_{\cdot}[S_T \big(\xi + \int_{\cdot}^T \beta dt]$ solves 
\begin{align*}
    dY = -\big(\kA \kZ + S^{-1}\bE_{\cdot}[S_T \beta_{\cdot}]\big)dt
\end{align*}
for an appropriate choice of $\kZ$. If we do not assume that $S$ is a true martingale, it may not be the case that  $S^{-1}\bE_{\cdot}[S_T \beta_{\cdot}] = \beta$, in which case the candidate solution $Y$ actually solves the wrong equation.
\end{remark}

Very similar arguments give the following versions of the previous two propositions.

\begin{proposition} \label{pro:lqexisth}
Suppose that $S$ satisfies $\rp$ for some $p > 1$. Let $p' \in (1,p)$ and $q$ be the conjugate of $p'$. Then existence in $\sque$ holds for HBSDE($\kA$). In particular, for each $\xi \in \lque$, there is a solution $(Y, \kZ)$ satisfying 
\begin{enumerate}
    \item $Y_t = S_t^{-1}\bE_t[S_T\xi] \text{ for each } t \in [0,T]$,
    \item $\norm{Y}_{\sque} + \norm{\kZ}_{\ltq} \leqc \norm{\xi}_{\lque}, \, \, \const{\rpc(S), \norm{\kA}_{\bmo}}$.
\end{enumerate}
\end{proposition}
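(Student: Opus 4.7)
The plan is to follow the template of Proposition \ref{pro:linfexisth}: define the candidate $Y_t := S_t^{-1}\bE_t[S_T\xi]$ (well-defined by Lemma \ref{lem:sinv}, and by construction satisfying (1)), recover $\kZ$ through It\^o combined with the dynamics \eqref{sinv} of $S^{-1}$ and the martingale representation of $t \mapsto \bE_t[S_T\xi]$, and then derive a priori bounds on $Y$ in $\sque$ and on $\kZ$ in $\ltq$. Since $\rp$ with $\xi \in \lque$ guarantees $S_T\xi \in \lone$, the martingale $\bE_\cdot[S_T\xi]$ and its representing integrand are well defined.

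The heart of the argument is the $\sque$ bound on $Y$. The naive conditional H\"older with exponents $p'$ and $q$ only gives the pointwise bound $|Y_t|^q \lesssim \bE_t[|\xi|^q]$, which is not enough: Doob's maximal inequality fails for merely $\lone$-bounded martingales, so the supremum in $t$ cannot be controlled. The trick I would use is to trade a sliver of integrability of $S_t^{-1}S_T$ for strictly more integrability on $\xi$: pick an intermediate exponent $r \in (p',p)$, set $s := r/(r-1) \in (1,q)$, and apply conditional H\"older as
\begin{align*}
|Y_t| \leq \bE_t[|S_t^{-1}S_T|\,|\xi|] \leq \bE_t[|S_t^{-1}S_T|^r]^{1/r}\,\bE_t[|\xi|^s]^{1/s} \leq \rpc(S)^{1/p}\,\bE_t[|\xi|^s]^{1/s},
\end{align*}
where the last step uses conditional Jensen (valid because $p/r > 1$) together with $\rp$. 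Raising to the $q$-th power yields $|Y_t|^q \leq \rpc(S)^{q/p}M_t^{q/s}$ for the positive martingale $M_t := \bE_t[|\xi|^s]$. Because $q/s > 1$, Doob's maximal inequality applied to $M$ gives $\bE[\sup_t M_t^{q/s}] \lesssim \bE[M_T^{q/s}] = \bE[|\xi|^q]$, yielding $\norm{Y}_{\sque} \lesssim \norm{\xi}_{\lque}$ with a constant depending on $\rpc(S)$, $p$, and $p'$.

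With the $\sque$ bound in hand, the $\kZ$ part is routine. Martingale representation supplies $\tilde\kZ$ with $d\bE_\cdot[S_T\xi] = \tilde\kZ\,d\kB$, and It\^o together with \eqref{sinv} produces $dY = -\kA\kZ\,dt + \kZ\,d\kB$ for $\kZ := S^{-1}\tilde\kZ - \kA Y \in \lto_{loc}$. To upgrade $\kZ$ to $\ltq$, I would apply It\^o to $|Y|^2$, isolate $\int_0^T |\kZ|^2\,dt$, raise the resulting identity to the $q/2$-power, and estimate the pieces: BDG on the martingale term, and on the cross term $\int_0^T Y\cdot\kA\kZ\,dt$ use Cauchy-Schwarz together with the fact that $\int_0^T|\kA|^2 dt$ has moments of every order (a consequence of $\kA \in \bmo$ via the John-Nirenberg/energy inequality), then absorb via Young's inequality in the standard way. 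This yields $\norm{\kZ}_{\ltq} \lesssim \norm{\xi}_{\lque}$ with a constant depending additionally on $\norm{\kA}_{\bmo}$, as claimed.

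The only real obstacle is the $\sque$ estimate for $Y$; once the intermediate-exponent H\"older-then-Doob maneuver above exploits the strict inequality $p' < p$ in the hypothesis, the remainder of the argument is a direct adaptation of already-established ones.
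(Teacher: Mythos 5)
The candidate $Y_t=S_t^{-1}\bE_t[S_T\xi]$ and your $\sque$ bound for it are correct, and this is indeed the crux of the statement: the paper gives no proof of Proposition \ref{pro:lqexisth} (it is dismissed as a ``very similar argument'' to Propositions \ref{pro:linfexisth} and \ref{pro:linfexist}), and your intermediate-exponent maneuver --- conditional H\"older with $r\in(p',p)$, conditional Jensen to invoke $\rp$, then Doob's maximal inequality in $L^{q/s}$ applied to $M_t=\bE_t[|\xi|^s]$ --- is exactly the standard device, and exactly where the strict inequality $p'<p$ is consumed.

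There is, however, a gap in your $\ltq$ estimate for $\kZ$. After It\^o on $|Y|^2$, your treatment of the cross term by pathwise Cauchy--Schwarz gives
\begin{align*}
\Bigl(\int_0^T |Y_s|\,|\kA_s|\,|\kZ_s|\,ds\Bigr)^{q/2}\ \leq\ \sup_{t}|Y_t|^{q/2}\Bigl(\int_0^T|\kA_s|^2\,ds\Bigr)^{q/4}\Bigl(\int_0^T|\kZ_s|^2\,ds\Bigr)^{q/4},
\end{align*}
and after Young absorbs the $\kZ$ factor you must control $\bE\bigl[\sup_t|Y_t|^{q}\bigl(\int_0^T|\kA_s|^2ds\bigr)^{q/2}\bigr]$. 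This product cannot be split: any H\"older exponents place a power strictly greater than $q$ on $\sup_t|Y_t|$, and $\norm{Y}_{\mathcal{S}^{qu}}$ for $u>1$ is not controlled by $\norm{\xi}_{\lque}$ (it is generally infinite when $\xi$ is only in $\lque$); the fact that $\int_0^T|\kA|^2\,ds$ has moments of every order does not help. Note that the corresponding step in Proposition \ref{pro:linfexisth} closes only because $\norm{Y}_{\sinf}$ is a deterministic constant that can be pulled out --- precisely what is unavailable here. The standard repair is the $L^q$ version of Fefferman's inequality for $\bmo$ integrands (see \cite{Delbaen-Tang}):
\begin{align*}
\bE\Bigl[\Bigl(\int_0^T|\kA_s|\,|\kZ_s|\,ds\Bigr)^{q}\Bigr]\ \leq\ C_q\,\norm{\kA}_{\bmo}^{q}\,\bE\Bigl[\Bigl(\int_0^T|\kZ_s|^2\,ds\Bigr)^{q/2}\Bigr],
\end{align*}
after which H\"older (exponent $2$ on $\sup_t|Y_t|^{q/2}$ against $\bigl(\int_0^T|\kA||\kZ|ds\bigr)^{q/2}$) and Young close the estimate with a constant depending only on $q$, $\rpc(S)$ and $\norm{\kA}_{\bmo}$. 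Your BDG treatment of the martingale term and the final localization are fine.
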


\begin{proposition} \label{pro:lqexist}
Suppose that $S$ satisfies $\rp$ for some $p > 1$ and that $S$ is a true martingale. Let $p' \in (1,p)$ and $q$ be the conjugate of $p'$. Then existence in $\sque$ holds for BSDE($\kA$). In particular, for each $\xi \in \lque$, there is a solution $(Y, \kZ)$ satisfying 
\begin{enumerate}
    \item $Y_t = S_t^{-1}\bE_t[S_T\big(\xi + \int_t^T \beta_u du\big)] \text{ for each } t \in [0,T]$,
    \item $\norm{Y}_{\sque} + \norm{\kZ}_{\ltq} \leqc \norm{\xi}_{\lque}  + \norm{\beta}_{\loq}, \, \, \const{\rpc(S), \norm{\kA}_{\bmo}}$.
\end{enumerate}
\end{proposition}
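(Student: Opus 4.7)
The plan is to mimic closely the proof of Proposition \ref{pro:linfexist}, replacing the $L^\infty$-based arguments with $L^q$ ones and using Proposition \ref{pro:strongerrh} (which upgrades the reverse H\"older inequality to a Doob-type maximal bound) as the main integrability tool. First I would set
\begin{equation*}
    Y_t \coloneqq S_t^{-1}\bE_t\!\left[S_T\Big(\xi + \int_t^T \beta_u\,du\Big)\right],
\end{equation*}
which is well-defined a.s.\ by Lemma \ref{lem:sinv}. Since $\rp$ implies $(R_{p'})$ for any $p' \in (1,p)$, Proposition \ref{pro:strongerrh} combined with the conditional H\"older inequality applied with exponents $p'$ and $q$ yields
\begin{equation*}
    |Y_t| \leq \bE_t\!\left[|S_t^{-1}S_T|^{p'}\right]^{1/p'} \bE_t\!\left[\big(|\xi|+\tint_0^T|\beta_u|\,du\big)^q\right]^{1/q},
\end{equation*}
so that by Doob's inequality in $\lque$ applied to the second (martingale) factor, together with the maximal estimate \eqref{strongrh} applied to control $\sup_t$ of the first factor, one obtains $\norm{Y}_{\sque} \leqc \norm{\xi}_{\lque} + \norm{\beta}_{\loq}$ with the constant depending only on $\rpc(S)$, $p$, $p'$, $q$, and the universal parameters.

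Next I would identify the $\kZ$ part of the solution as in Proposition \ref{pro:linfexist}: let $\tilde\kZ \in \lto_{\mathrm{loc}}$ be the integrand in the martingale representation of $\bE_\cdot[S_T(\xi+\int_0^T\beta_u\,du)]$ and apply It\^o's formula to $Y = S^{-1}\bE_\cdot[S_T(\xi+\int_0^\cdot \beta_u\,du)] - \int_0^\cdot \beta_u\,du$, using the SDE \eqref{sinv} for $S^{-1}$. This gives $\kZ = -\kA S^{-1}\bE_\cdot[S_T(\xi+\int_0^T\beta_u\,du)] + S^{-1}\tilde\kZ$, and shows that $(Y,\kZ)$ satisfies \eqref{lin-bsde-1}; crucially, because $S$ is a true martingale, the identity $S^{-1}\bE_\cdot[S_T\beta_\cdot] = \beta$ allows the drift to collapse to $-(\kA\kZ+\beta)\,dt$, as flagged in the Remark following Proposition \ref{pro:linfexist}.

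Finally I would establish the $\ltq$ bound on $\kZ$ through a standard energy-type estimate. Applying It\^o's formula to $|Y|^2$ gives
\begin{equation*}
    |Y_0|^2 + \int_0^T |\kZ_s|^2\,ds = |\xi|^2 + 2\!\int_0^T Y_s\cdot(\kA_s\kZ_s+\beta_s)\,ds - 2\!\int_0^T Y_s\cdot\kZ_s\,d\kB_s,
\end{equation*}
so that after raising to the power $q/2$, taking expectations, applying the BDG inequality to the martingale term, Fefferman's inequality (or Young combined with the $\bmo$ norm of $\kA$) to the cross term $\int Y\kA\kZ\,dt$, and H\"older to the $\int Y\beta\,dt$ term, one absorbs the factor involving $\norm{\kZ}_{\ltq}$ into the left-hand side and concludes
\begin{equation*}
    \norm{\kZ}_{\ltq} \leqc \norm{Y}_{\sque} + \norm{\beta}_{\loq} \leqc \norm{\xi}_{\lque} + \norm{\beta}_{\loq},
\end{equation*}
with constants depending on $\rpc(S)$, $\norm{\kA}_{\bmo}$, and the universal parameters. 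The main obstacle I anticipate is arranging the $\sque$ estimate on $Y$ so that the loss incurred by replacing $p$ with $p'$ is controlled cleanly—this is exactly why the hypothesis is stated in terms of $(R_p)$ with a conjugate dual $q$ to some $p' < p$ rather than directly to $p$, and why Proposition \ref{pro:largerp} is tacitly needed for the analogous statement when one wants to start from $\rp$ alone. The other technical subtlety is verifying $\kZ \in \lto$ (not just $\lto_{\mathrm{loc}}$) a priori; this is handled by the same localization argument sketched at the end of the proof of Proposition \ref{pro:linfexisth}.
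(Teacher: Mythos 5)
Your overall architecture is the right one and matches what the paper intends (the paper gives no separate proof of this proposition, saying only that ``very similar arguments'' to Propositions \ref{pro:linfexisth} and \ref{pro:linfexist} apply): define $Y$ by the representation formula, identify $\kZ$ via It\^o's formula and the dynamics \eqref{sinv} of $S^{-1}$, use the true-martingale property of $S$ to collapse the drift to $-(\kA\kZ+\beta)$, and close with an energy-type estimate for $\norm{\kZ}_{\ltq}$. Those parts are fine.

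However, the $\sque$ estimate on $Y$ --- the one step where the $L^q$ setting genuinely differs from the $L^\infty$ setting --- is set up with the wrong pair of conjugate exponents, and as written it does not close. Writing $\zeta = |\xi| + \int_0^T|\beta_u|\,du$, you apply the conditional H\"older inequality with exponents $p'$ and $q$, which leaves the data factor as $\bE_t[\zeta^q]^{1/q}$. The process $N_t=\bE_t[\zeta^q]$ is a martingale bounded only in $L^1$, and $\sup_t|Y_t|^q\lesssim \sup_t N_t$; there is no Doob maximal inequality at exponent $1$, so $\bE[\sup_t N_t]$ cannot be controlled by $\bE[N_T]=\bE[\zeta^q]$. (Doob's $L^q$ inequality would apply to the martingale $\bE_t[\zeta]$, but by Jensen that is a \emph{lower} bound for your factor, not an upper bound.) The fix is to spend the integrability gap on the $S$ factor rather than on the data factor: apply conditional H\"older with $p$ and its conjugate $p^*=p/(p-1)$, noting that $p^*<q$ precisely because $p'<p$. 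The first factor is then bounded a.s.\ by $\rpc(S)^{1/p}$ using $\rp$ itself, and the second factor is $\bE_t[\zeta^{p^*}]^{1/p^*}$; since $q/p^*>1$, Doob's maximal inequality at exponent $q/p^*$ applied to the martingale $\bE_t[\zeta^{p^*}]$ gives $\bE[\sup_t\bE_t[\zeta^{p^*}]^{q/p^*}]\lesssim\bE[\zeta^q]$, hence $\norm{Y}_{\sque}\lesssim\norm{\xi}_{\lque}+\norm{\beta}_{\loq}$. This is exactly the role of the strict inequality $p'<p$; your closing remark correctly identifies that the gap between $p'$ and $p$ must be exploited, but your inequality chain gives that room away in the wrong place. (A minor additional point: the maximal estimate \eqref{strongrh} is not needed to control the first factor --- the reverse H\"older bound is already uniform over stopping times, and one only needs the standard continuity argument to get it simultaneously for all $t$ a.s.)
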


\section{Reverse H\"older from Well-posedness} \label{sec:rh}

The results of the previous section show that if $S$ is a true martingale satisfying $\rp$, then BSDE($\kA$) is well-posed in $\sque$, where $1 \leq p < \infty$ and $q$ is the conjugate of $p$. This section is devoted to proving a converse of this statement. We start with a sequence of simple lemmas. Throughout this section, $\sG$ denotes a $\sigma$-algebra contained in $\sF$.

\begin{lemma} \label{vectors}
Let $1 \leq p < \infty$ and $q$ be the conjugate of $p$. Let $X \in \lpee(\Omega; \R^n)$. Then $\norm{\bE[|X|^p | \sG}_{\linf}^{1/p}$ is equal to the smallest constant $C$ such that
\begin{align} \label{assump}
\norm{\bE[ X \cdot Y| \sG]|}_{\lque} \leqc \norm{Y}_{\lque}
\end{align}
holds for all $Y \in \lque(\Omega ; \R^n)$. 
\end{lemma}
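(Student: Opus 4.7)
The plan is to prove the equality by establishing two matching inequalities. Write $\xi \coloneqq \bE[|X|^p \mid \sG]$. The upper bound $\norm{\bE[X \cdot Y \mid \sG]}_{\lque} \le \norm{\xi}_{\linf}^{1/p} \norm{Y}_{\lque}$ will show that $\norm{\xi}_{\linf}^{1/p}$ is an admissible constant in \eqref{assump}; the lower bound will show that any admissible $C$ is at least $\norm{\xi}_{\linf}^{1/p}$, whence the two agree.

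For the upper bound I would combine the pointwise Cauchy--Schwarz inequality $|X\cdot Y|\le |X|\,|Y|$ with conditional H\"older to obtain
\[
|\bE[X \cdot Y \mid \sG]| \le \bE[|X|\,|Y| \mid \sG] \le \xi^{1/p} \, \bE[|Y|^q \mid \sG]^{1/q}.
\]
Raising to the $q$-th power, bounding $\xi^{q/p}$ pointwise by $\norm{\xi}_{\linf}^{q/p}$, taking expectations, and using the tower property to collapse the inner conditional expectation of $|Y|^q$ yields $\norm{\bE[X \cdot Y \mid \sG]}_{\lque}^q \le \norm{\xi}_{\linf}^{q/p} \norm{Y}_{\lque}^q$; extracting $q$-th roots finishes this half.

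For the lower bound the natural test function, imitating the scalar duality argument, is $Y \coloneqq \tfrac{X}{|X|} |X|^{p-1} \ind{A}$ (with the convention $0/0 = 0$) for a judiciously chosen $A \in \sG$. With this choice $X \cdot Y = |X|^p \ind{A}$ a.s., so $\bE[X \cdot Y \mid \sG] = \ind{A}\xi$, while $|Y|^q = |X|^{(p-1)q} \ind{A} = |X|^p \ind{A}$ gives $\norm{Y}_{\lque}^q = \bE[\ind{A}\xi]<\infty$. Plugging these into the hypothesis \eqref{assump} produces the scalar inequality $\bE[\ind{A}\xi^q] \le C^q \bE[\ind{A}\xi]$. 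For any $M < \norm{\xi}_{\linf}$ I would then take $A = \{\xi > M\} \in \sG$, which has positive probability; the left side is then at least $M^{q-1}\bE[\ind{A}\xi] > 0$, so cancelling the positive factor $\bE[\ind{A}\xi]$ yields $M^{q-1} \le C^q$, and sending $M \uparrow \norm{\xi}_{\linf}$ gives $\norm{\xi}_{\linf}^{1/p} \le C$, as desired. The one place I anticipate needing a separate argument is the endpoint $p = 1$ (so $q = \infty$): there the test function collapses to $Y = (X/|X|)\,\ind{A}$, the hypothesis \eqref{assump} reads $\norm{\ind{A}\xi}_{\linf} \le C$ for every $A \in \sG$, and the choice $A = \{\xi > \norm{\xi}_{\linf} - \eps\}$ with $\eps \downarrow 0$ completes the argument. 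This endpoint, rather than the main $p>1$ computation, is the only point requiring genuine care.
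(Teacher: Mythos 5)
Your proposal is correct and follows essentially the same route as the paper: the upper bound via conditional H\"older, and the lower bound via the dual test function $Y = \ind{A}\,|X|^{p-2}X$ restricted to the $\sG$-measurable level set $A = \set{\bE[|X|^p \mid \sG] > M}$, with the $p=1$ endpoint handled separately by $Y = (X/|X|)\ind{X \neq 0}$. No gaps.
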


\begin{proof} When $p > 1$, we have by the conditional H\"older inequality, we have
\begin{align*}
    \bE[X \cdot Y | \sG] \leq \bE[|X|^p | \sG]^{1/p} \bE[|Y|^q | \sG]^{1/q} \leq \norm{\bE[|X|^p | \sG]}_{\linf}^{1/p} \bE[|Y|^q | \sG]^{1/q}. 
\end{align*}
It follows that 
\begin{align*}
    \norm{ \bE[X \cdot Y | \sG]}_{\lque} \leq \norm{\bE[|X|^p | \sG]}_{\linf}^{1/p} \norm{Y}_{\lque}, 
\end{align*}
i.e. $C \leq \norm{\bE[|X|^p | \sG]}_{\linf}^{1/p}$. A similar argument gives the same inequality in the case $p = 1$. 

Now we show that $C \geq \norm{\bE[|X|^p | \sG]}_{\linf}^{1/p}$. 
First we suppose that $p > 1$. Let $G \in \mathcal{G}$, and set $Y =  1_G |X|^{p/q - 1} X$. Notice that $|Y| = 1_G |X|^{p/q}$, so $|Y|^q = 1_G |X|^p$, so $Y \in L^q$. Next, notice that 
\begin{align*}
    X \cdot Y = 1_G |X|^{p/q - 1} X \cdot X = 1_G |X|^{p/q + 1} = 1_G |X|^{\frac{p + q}{q}} = 1_G |X|^{p}, 
\end{align*}
and so 
\begin{align*}
    \bE[ X \cdot Y | \sG]^q = 1_G \bE[|X|^p | \sG]^q. 
\end{align*}
So, we deduce from \eqref{assump} that 
\begin{align*}
    \bE[ 1_G \bE[|X|^p | \sG]^q] \leq C^q\bE[ |Y|^q] = C^q\bE[1_G |X|^p] = C^q\bE[1_G \bE[|X|^p | \sG]]
\end{align*}
holds for all $G \in \sG$. If $a > 0$ and $\bP[ \bE[|X|^p | \sG] > a]] > 0 $, then $\bE[|X|^p | \sG]^q \geq a^{q - 1} \bE[|X|^p | \sG ]$ on $G \coloneqq \{ \bE[|X|^p | \sG] > a\}$, so 
\begin{align*}
    a^{q-1} \bE[ 1_G \bE[|X|^p | \sG] ] \leq \bE[ 1_G\bE[|X|^p | \sG ]^q] \leq C^q \bE[ 1_G\bE[|X|^p | \sG]], 
\end{align*}
so $a^{q-1} \leq C^q$, i.e. $a \leq C^{\frac{q}{q-1}} = C^p$. This shows that $C \geq \norm{ \bE[|X|^p | \sG]}_{\linf}^{1/p}$ holds in the case $p > 1$.

Now suppose that $X \in \lone(\Omega; \R^n)$, and 
Define $Y$ by $Y = \frac{X}{|X|}1_{X \neq 0}$. Then we have $X \cdot Y = |X|$, and so by \eqref{assump},
    $\norm{\bE[|X| | \sG]}_{\linf} \leq C$, 
as desired.
\end{proof}

\begin{lemma}
\label{matrices}
Suppose that $A \in \lpee(\Omega ; \R^{n \times n})$, and
\begin{align} \label{assump2}
   \norm{\bE[ AY | \sG ]|}_{\lque} \leqc \norm{Y}_{\lque}
\end{align}
holds for all $Y \in \lque(\Omega; \R^n)$. Then there is a constant $C'$ depending only on $C$ such that
\begin{align} \label{result2}
    \norm{\bE[ |A|^p | ]}_{\linf} \leq C'. 
\end{align}
\end{lemma}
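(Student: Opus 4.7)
The plan is to reduce the matrix case to the scalar case of Lemma \ref{vectors} by extracting information about individual matrix entries from the hypothesis \eqref{assump2}. The key observation is that the operator norm $|A|$ is comparable, up to a constant depending only on $n$, to the entrywise $\ell^p$ norm; indeed, the elementary inequality
\begin{align*}
|A|^p \;\leq\; C_n \sum_{i,j=1}^n |A_{ij}|^p
\end{align*}
holds for some $C_n$ depending only on $n$ and $p$ (this follows by comparing the operator norm to the Frobenius norm and then invoking either Jensen's inequality when $p \geq 2$ or subadditivity of $x \mapsto x^{p/2}$ when $p \leq 2$). Consequently, it suffices to show that each scalar entry $A_{ij}$ satisfies $\norm{\bE[|A_{ij}|^p | \sG]}_{\linf} \leq C^p$.

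To produce such a bound for a fixed pair $(i,j)$, I would feed into the hypothesis \eqref{assump2} the specific test vector $Y = W e_j$, where $W \in \lque(\Omega;\R)$ is an arbitrary scalar random variable and $e_j$ is the $j$-th standard basis vector of $\R^n$. Two facts are then immediate: $\norm{Y}_{\lque} = \norm{W}_{\lque}$, and the $i$-th component of the $\R^n$-valued random variable $AY = W(Ae_j)$ is the scalar $W A_{ij}$. Since the modulus of any coordinate of a vector is controlled by the modulus of the vector itself, we obtain
\begin{align*}
\norm{\bE[A_{ij} W \,|\, \sG]}_{\lque} \;\leq\; \norm{\bE[AY \,|\, \sG]}_{\lque} \;\leq\; C\norm{W}_{\lque}.
\end{align*}

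This last display is precisely the hypothesis \eqref{assump} of Lemma \ref{vectors} applied to the scalar random variable $X = A_{ij}$ (i.e., with $n$ replaced by $1$), and with constant $C$. Lemma \ref{vectors} therefore yields $\norm{\bE[|A_{ij}|^p | \sG]}_{\linf}^{1/p} \leq C$, and combining with the entrywise bound on $|A|^p$ gives $\norm{\bE[|A|^p | \sG]}_{\linf} \leq n^2 C_n C^p$, which is the desired conclusion with $C' = n^2 C_n C^p$.

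No serious obstacle arises: the argument is essentially bookkeeping, exploiting that the coordinatewise test vectors $W e_j$ probe exactly one column of $A$ at a time, and that Lemma \ref{vectors} is already available in scalar form. The only cost is a multiplicative constant depending on $n$, which is harmless given the paper's convention that $n$ is a universal parameter.
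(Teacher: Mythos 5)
Your proof is correct and follows essentially the same strategy as the paper's: both reduce \eqref{result2} to Lemma \ref{vectors} by projecting the hypothesis \eqref{assump2} onto components and then recombining via equivalence of norms on a finite-dimensional space. The only (harmless) difference is that the paper tests against general $Y$ and applies the vector form of Lemma \ref{vectors} to each row $A^i$, whereas you test against $Y = We_j$ and apply the scalar case to each entry $A_{ij}$, costing an extra factor of $n$ in the final constant.
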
 
\begin{proof}
Denote by $A^i$ the $i^{th}$ row of the matrix $A$. Notice that 
\begin{align*}
    |\bE[ AY | ] | \geq |\bE[A^i \cdot Y | ]|, 
\end{align*}
and so from \eqref{assump2} we get 
\begin{align*}
   \norm{\bE[ A^i \cdot Y | ]|}_{\lque} \leq_{C} \norm{Y}_{\lque}. 
\end{align*}
Applying Lemma \ref{vectors} gives $||\bE[ |A^i|^p | ] ||_{L^{\infty}} < C^{\frac{q}{q-1}}$, and since this holds for each $i$, we get the result. 
\end{proof}

\begin{lemma} \label{lem:martfromunique}
If $S$ satisfies $\ro$ and uniqueness in $\sinf$ holds for HBSDE($\kA$), then $S$ is a true martingale.
\end{lemma}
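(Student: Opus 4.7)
The plan is to exploit the tension between the explicit solution formula from Proposition~\ref{pro:linfexisth} and the trivial constant solution to HBSDE($\kA$) with a constant terminal datum. Since $S$ satisfies $\ro$, Proposition~\ref{pro:linfexisth} (which requires only $\ro$, not that $S$ be a true martingale) applies to HBSDE($\kA$): for each $\xi \in \linf(\sF_T;\R^n)$ there exists a solution $(Y,\kZ) \in \sinf \times \bmo$ of \eqref{lin-bsde-2} given explicitly by $Y_t = S_t^{-1}\bE_t[S_T \xi]$.

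Now I would apply this with the deterministic choice $\xi = e_i$, the $i^{\text{th}}$ standard basis vector of $\R^n$. This $\xi$ trivially lies in $\linf$, and Proposition~\ref{pro:linfexisth} produces a solution
\begin{align*}
    Y^{(i)}_t = S_t^{-1}\bE_t[S_T e_i], \qquad (Y^{(i)},\kZ^{(i)}) \in \sinf \times \bmo.
\end{align*}
On the other hand, the constant pair $(\tilde Y, \tilde \kZ) = (e_i, 0)$ also lies in $\sinf \times \bmo$ and manifestly satisfies HBSDE($\kA$) with terminal condition $e_i$. By the hypothesis of uniqueness in $\sinf$ for HBSDE($\kA$), these two solutions must coincide, so
\begin{align*}
    S_t^{-1}\bE_t[S_T e_i] = e_i \quad\text{for each } t \in [0,T],
\end{align*}
which by Lemma~\ref{lem:sinv} (invertibility of $S_t$) rearranges to $\bE_t[S_T e_i] = S_t e_i$. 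Running $i$ over $1,\dots,n$ yields $\bE_t[S_T] = S_t$ as an identity of $\R^{n\times n}$-valued random variables, so $S$ is a true martingale on $[0,T]$.

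There is essentially no obstacle beyond assembling these observations; the slightly subtle point is simply that Proposition~\ref{pro:linfexisth} is a pure existence statement that does not presuppose $S$ is a martingale, so we are free to use it here to build a candidate solution whose formula carries information about conditional expectations of $S_T$. Uniqueness then promotes that candidate to equality with the trivial solution, from which the martingale property falls out.
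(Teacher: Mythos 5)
Your proof is correct and is essentially the paper's own argument: test uniqueness against the constant solution $(e_i,0)$ for $\xi=e_i$, invoke the representation $Y_t=S_t^{-1}\bE_t[S_T e_i]$ from the existence result under $\ro$, and conclude $\bE_t[S_T]=S_t$. Your explicit remark that one must use Proposition~\ref{pro:linfexisth} (which needs only $\ro$) rather than Proposition~\ref{pro:linfexist} (which presupposes the martingale property) is exactly right and in fact corrects a citation slip in the paper's proof.
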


\begin{proof}
Let $e_j$ be the $j^{th}$ standard basis vector in $\R^n$, and set $\xi = e_j$. Then the unique solution to \eqref{lin-bsde-2} must satisfy $Y_t = \xi = e_j$. But by Proposition \ref{pro:linfexist}, we see that 
\begin{align*}
    e_j = Y_t = S_t^{-1}\bE_t[S_T \xi] = S_t^{-1}\bE_t[S_T]e_j,
\end{align*}
and so for each $t$, we have $\bE_t[S_T] = S_t$, which shows that $S$ is a true martingale. 
\end{proof}

\begin{proposition} \label{pro:rpfromeqn}
Suppose that for HBSDE($\kA$) is well-posed in $\sque$, for some $q \in (1, \infty]$. 
Then, $S$ is a true martingale that satisfies $\rp$, where $p$ is the conjugate of $q$. Moreover, $\rpc(S)$ depends only on the constant $C$ which verifies the estimate 
\begin{align*}
    \norm{Y}_{\sque} \leqc \norm{\xi}_{\lque}
\end{align*}
for each $\xi \in \sque$. 
\end{proposition}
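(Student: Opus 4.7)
My plan is to deduce $\rp$ by a truncation argument that applies Lemma \ref{matrices}, and then conclude that $S$ is a true martingale via Lemma \ref{lem:martfromunique}. The crucial point is that the well-posedness bound $\norm{Y_\tau}_{\lque(\sF_\tau)} \leq C \norm{\eta}_{\lque(\sF_T)}$ will be recast as precisely the dual-side hypothesis of Lemma \ref{matrices} applied to a truncation of $S_\tau^{-1}S$.

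\textbf{Key identity.} Fix a stopping time $\tau$ and for each $m \in \N$ set
\begin{align*}
    \tau_m = \inf\{t \geq \tau : |S_\tau^{-1} S_t| \geq m\} \wedge T,
\end{align*}
so that $\tau_m \uparrow T$ a.s.\ by continuity of $S_\tau^{-1} S$ on $[\tau, T]$. For $\xi \in \lque(\sF_T; \R^n)$ with unique solution $(Y, \kZ)$, Lemma \ref{lem: sylocalmart} implies that $S_\tau^{-1} S Y$ is a local martingale on $[\tau, T]$. Stopped at $\tau_m$ it is dominated by $m \sup_t |Y_t|$, which lies in $\lque \subset L^1$, so it is a uniformly integrable true martingale. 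This yields
\begin{align*}
    Y_\tau = \bE_\tau\big[S_\tau^{-1} S_{\tau_m} Y_{\tau_m}\big].
\end{align*}
The crucial observation is that for any $\eta \in \lque(\sF_{\tau_m}; \R^n)$, taking $\xi = \eta$ forces $Y_{\tau_m} = \eta$: indeed, $(\eta, 0)$ solves HBSDE on the sub-interval $[\tau_m, T]$ with this terminal, and uniqueness on $[\tau_m, T]$ (obtained from uniqueness on $[0,T]$ by extending any competitor constantly to $[0, \tau_m]$ with $\kZ \equiv 0$) pins it down. Substituting,
\begin{align*}
    \bE_\tau\big[S_\tau^{-1} S_{\tau_m} \eta\big] = Y_\tau, \qquad \norm{Y_\tau}_{\lque(\sF_\tau)} \leq C \norm{\eta}_{\lque(\sF_T)}.
\end{align*}

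\textbf{Applying Lemma \ref{matrices} and passing to the limit.} For general $\phi \in \lque(\sF_T; \R^n)$, the $\sF_{\tau_m}$-measurability of $S_\tau^{-1} S_{\tau_m}$ together with the tower property gives $\bE_\tau[S_\tau^{-1} S_{\tau_m} \phi] = \bE_\tau[S_\tau^{-1} S_{\tau_m} \bE_{\tau_m}[\phi]]$, so conditional Jensen extends the bound above to all $\phi \in \lque(\sF_T; \R^n)$. Since $|S_\tau^{-1} S_{\tau_m}| \leq m$, Lemma \ref{matrices} applies and yields
\begin{align*}
    \norm{\bE_\tau[|S_\tau^{-1} S_{\tau_m}|^p]}_{\linf} \leq C',
\end{align*}
with $C'$ depending only on $C$ (and universal constants). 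Fatou's lemma as $m \to \infty$ then gives $\rp$ with the same constant. Since $\rp$ implies $\ro$ by conditional Jensen, and well-posedness in $\sque$ implies uniqueness in $\sinf$ (because $\sinf \times \bmo \subset \sque \times \ltq$), Lemma \ref{lem:martfromunique} concludes that $S$ is a true martingale.

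\textbf{Main obstacle.} The delicate step is the sub-interval uniqueness on $[\tau_m, T]$ that gives $Y_{\tau_m} = \eta$. The constant-extension argument works but requires verifying that the extended process is again in $\sque \times \ltq$ on $[0, T]$, so that the uniqueness on $[0, T]$ can be genuinely invoked; fortunately constant backward extension with zero $\kZ$-part does not increase either norm. Apart from this, the argument is a fairly direct combination of the local-martingale identity for $SY$ with the $L^p$--$L^q$ duality embedded in Lemma \ref{matrices}.
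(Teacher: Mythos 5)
Your argument follows essentially the same route as the paper's proof: localize with stopping times so that the relevant piece of $S$ is bounded, use uniqueness to identify the value of $Y$ at the stopping time when the terminal condition is measurable there, feed the resulting bound $\norm{\bE_\tau[S_\tau^{-1}S_{\tau_m}\eta]}_{\lque}\leq C\norm{\eta}_{\lque}$ into the duality statement (Lemma \ref{matrices}), pass to the limit by conditional Fatou, and conclude that $S$ is a true martingale via Lemma \ref{lem:martfromunique}. Your extra step of extending the test class from $\lque(\sF_{\tau_m})$ to all of $\lque(\sF_T)$ by the tower property is a nice touch that makes Lemma \ref{matrices} literally applicable.

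The one place where your written justification does not hold up is the sub-interval uniqueness used to force $Y_{\tau_m}=\eta$. Extending a competitor $(\tilde Y,\tilde\kZ)$ defined on $[\tau_m,T]$ ``constantly'' to $[0,\tau_m]$ by setting $\tilde Y_t:=\tilde Y_{\tau_m}$ for $t<\tau_m$ does not produce an admissible solution on $[0,T]$: the random variable $\tilde Y_{\tau_m}$ is only $\sF_{\tau_m}$-measurable, so the extended process is not adapted, and uniqueness on $[0,T]$ cannot be invoked for it. (Your remark that the extension ``does not increase either norm'' addresses integrability but not adaptedness, which is the real obstruction.) To be fair, the paper asserts the same identity $Y_{\tau_k}=\xi$ for $\xi\in\sF_{\tau_k}$ by a bare appeal to ``uniqueness'' without supplying the extension either, so this is a step both arguments treat too lightly; but as written your justification of it is incorrect and needs to be replaced by a genuine argument that the unique $\sque$-solution with an $\sF_{\tau_m}$-measurable terminal condition is constant on $[\tau_m,T]$.
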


\begin{proof}
Let $\tau_k = \inf \{t \geq 0 : \max \{|S_t|, |S_t^{-1}| \geq k\}\}$. Let $\sigma$ be an arbitrary stopping time. For each $k \in \N$ and $\xi \in \sF_{\tau_k}$, we have
\begin{align*}
    S_{\tau_k \wedge \sigma} Y_{\tau_k \wedge \sigma} = \bE_{\sigma}[S_{\tau_k} Y_{\tau_k}] =\bE_{\sigma}[S_{\tau_k}\xi],
\end{align*}
where we have used uniqueness and the fact that $\xi \in \sF_{\tau_k}$ to ensure that $Y_{\tau_k} = \xi$. From well-posedness we see that 
\begin{align*}
    \norm{\bE_{\sigma}[S_{\tau_k \wedge \sigma}^{-1} S_{\tau_k} \xi]}_{\lque} = \norm{Y_{\tau_k \wedge \sigma}}_{\lque} \leqc \norm{\xi}_{\lque}.
\end{align*}
It follows from Corollary \ref{matrices} that if $q = \infty$) that $\norm{ \bE_{\sigma}[|S_{\tau_k \wedge \sigma}^{-1} S_{\tau_k}|^p]}_{\linf} \leq C'$ for some $C'$ depending only on $C$. An application of the conditional Fatou's lemma shows that $S$ satisfies $\rp$. With this in hand, that $S$ is a true martingale follows from Lemma \ref{lem:martfromunique}.
\end{proof}

Combining Proposition \ref{pro:rpfromeqn} with the conditional H\"older's inequality gives the following corollary. 

\begin{corollary} \label{cor:wellposedsbiggerq}
Suppose that $1 < q < q' \leq \infty$, and BSDE($\kA$) is well-posed in $\sque$. Then BSDE($\kA$) is well-posed in $\mathcal{S}^{q'}$, and \begin{align*}
    \opnorm{S_{\kA}}_{q'} \leq C, \, \, \const{\opnorm{S_{\kA}}_q, \norm{\kA}_{\bmo}, q, q'}. 
\end{align*}
\end{corollary}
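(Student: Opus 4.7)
The plan is to combine Proposition \ref{pro:rpfromeqn} with the existence results of Section \ref{sec:existunique}. Since well-posedness of BSDE($\kA$) in $\sque$ implies well-posedness of HBSDE($\kA$) in $\sque$, Proposition \ref{pro:rpfromeqn} applies and yields that $S$ is a true martingale satisfying $\rp$, where $p$ is the conjugate of $q$; moreover $\rpc(S)$ is controlled by $\opnorm{S_{\kA}}_q$.

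Next, let $p'' \in [1, p)$ denote the conjugate of $q'$; note that $p'' < p$ since $q' > q$, and $p'' = 1$ precisely when $q' = \infty$. In the case $q' < \infty$, the hypotheses of Proposition \ref{pro:lqexist} are met (taking the ``$p'$'' appearing in that statement to be our $p''$), so existence in $\mathcal{S}^{q'}$ for BSDE($\kA$) follows, with a norm bound depending only on $\rpc(S)$ and $\norm{\kA}_{\bmo}$. For uniqueness in that range, Proposition \ref{pro:yrep} applies, because the conditional H\"older inequality together with $\rp$ yields $S \in \mathcal{M}^{p''}$. In the case $q' = \infty$, conditional H\"older similarly upgrades $\rp$ to $\ro$, and Propositions \ref{pro:linfexist} and \ref{pro: yrep1} deliver existence and uniqueness in $\sinf$ respectively, with analogous quantitative estimates. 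Either way, the resulting solution map $S_{\kA}$ is bounded as required.

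I do not foresee any substantive obstacle: the proof is essentially a concatenation of the forward direction (Proposition \ref{pro:rpfromeqn}, which extracts reverse H\"older from well-posedness at exponent $q$) with the backward direction (the existence and uniqueness propositions of Section \ref{sec:existunique}, which produce well-posedness at the larger exponent $q'$ from reverse H\"older). The only points requiring attention are the case split between $q' < \infty$ and $q' = \infty$ to select the correct pair of propositions, and the bookkeeping of constants so that the final bound on $\opnorm{S_{\kA}}_{q'}$ depends only on $\opnorm{S_{\kA}}_q$, $\norm{\kA}_{\bmo}$, $q$, and $q'$.
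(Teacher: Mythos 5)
Your proposal is correct and follows essentially the same route as the paper: extract the reverse H\"older inequality $\rp$ (and the true-martingale property) from well-posedness in $\sque$ via Proposition \ref{pro:rpfromeqn}, then pass to the smaller exponent $p''$ conjugate to $q'$ and invoke Propositions \ref{pro:lqexist} and \ref{pro:yrep} (or \ref{pro:linfexist} and \ref{pro: yrep1} when $q'=\infty$), with the same bookkeeping of constants. The only cosmetic difference is that the paper records the intermediate inequality $\text{R}_{p''}(S) \leq \rpc(S)^{p''/p}$ explicitly via conditional H\"older, whereas you feed $\rp$ directly into the hypotheses of Proposition \ref{pro:lqexist}; both are valid since $p''<p$ strictly.
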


\begin{proof}
If BSDE($\kA$) is well-posed in $\sque$, then by Proposition \ref{pro:rpfromeqn} and Lemma \ref{lem:martfromunique}, $S$ is a true martingale satisfying $\rp$, and $\rpc(S) \leq C$, $\const{\opnorm{S_{\kA}}_q}$. By the conditional reverse H\"older inequality, if $p'$ is the conjugate of $q'$ and $p$ is the conjugate of $q$, we have
\begin{align*}
   \bE_{\tau}[|S_{\tau}^{-1}S_T|^{p'}] \leq \bE_{\tau}[|S_{\tau}^{-1}S_T|^{p}]^{p'/p},  
\end{align*}
and so $\text{R}_{p'}(S) \leq \rpc(S)^{p'/p}$. The result now follows form Propositions \ref{pro:lqexist} and \ref{pro:yrep} (or Propositions \ref{pro:linfexist} and \ref{pro: yrep1} if $q' = \infty$). 
\end{proof}

Propositions \ref{pro:rpfromeqn}, \ref{pro: yrep1}, \ref{pro:yrep}, \ref{pro:linfexist} and \ref{pro:lqexist} now combine to give the following equivalence.

\begin{theorem} \label{thm:main}
Let $q \in (1,\infty]$, and $p$ be the conjugate of $q$. Then the following are equivalent. 
\begin{enumerate}
    \item $S$ is a true martingale which satisfies $\rp$. 
    \item BSDE($\kA$) is well-posed in $\sque$. 
    \item HBSDE($\kA$) is well-posed in $\sque$. 
\end{enumerate}
\end{theorem}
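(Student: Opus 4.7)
The plan is to establish the equivalence via the chain $(2) \Rightarrow (3) \Rightarrow (1) \Rightarrow (2)$. The implication $(2) \Rightarrow (3)$ is immediate, since HBSDE($\kA$) is just BSDE($\kA$) specialized to $\beta \equiv 0$, so well-posedness of the inhomogeneous equation in $\sque$ trivially gives well-posedness of the homogeneous one in the same space. Thus no real work is needed there.

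For $(3) \Rightarrow (1)$, I would invoke Proposition \ref{pro:rpfromeqn} directly. That proposition already asserts that well-posedness of HBSDE($\kA$) in $\sque$ forces $S$ to be a true martingale satisfying $\rp$ with $p$ the conjugate of $q$; the proof there proceeds by plugging in terminal conditions of the form $\xi \in \sF_{\tau_k}$ at stopping times where $S$ and $S^{-1}$ are bounded, converting the a-priori $\sque$-estimate on $Y$ into an $\linf$-bound on $\bE_\sigma[|S_{\tau_k \wedge \sigma}^{-1} S_{\tau_k}|^p]$ via Lemmas \ref{vectors} and \ref{matrices}, and finally passing to the limit with conditional Fatou.

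For $(1) \Rightarrow (2)$, I would split into the cases $q < \infty$ and $q = \infty$. When $q = \infty$, observe that $\rp$ for $p = 1$ is precisely $\ro$, so Proposition \ref{pro:linfexist} yields existence in $\sinf$ and Proposition \ref{pro: yrep1} yields uniqueness, together with the required a-priori estimate. When $q \in (1,\infty)$, the subtlety is that Proposition \ref{pro:lqexist} produces existence in $\sque$ only when $q$ is the conjugate of some $p' \in (1,p)$ \emph{strictly less than} $p$, whereas we want existence at the endpoint $p$ itself. To bridge this gap I would apply Proposition \ref{pro:largerp} to upgrade $\rp$ to $(R_{p''})$ for some $p'' > p$; then Proposition \ref{pro:lqexist} applied with the pair $(p'', p)$ playing the role of $(p, p')$ produces a solution in $\sque$ satisfying the required estimate. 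Uniqueness then comes from Proposition \ref{pro:yrep}, after noting that $S \in \mpee$ since it is a true martingale satisfying $\rp$ (taking $\tau = 0$ in the definition of $\rp$ gives $\bE[|S_T|^p] \leq \rpc(S)$).

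The main obstacle is the strict inequality $p' < p$ in Proposition \ref{pro:lqexist}, which would otherwise lose an endpoint; Proposition \ref{pro:largerp} is exactly the tool that resolves it, so the overall proof amounts to tracking constants and assembling the pieces already developed in Sections \ref{sec:existunique} and \ref{sec:rh} without any new technical work.
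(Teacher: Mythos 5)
Your proposal is correct and follows essentially the same route as the paper: the same cyclic chain of implications, with $(3)\Rightarrow(1)$ via Proposition \ref{pro:rpfromeqn}, the trivial $(2)\Rightarrow(3)$, and $(1)\Rightarrow(2)$ handled by splitting on $p=1$ versus $p>1$ and using Proposition \ref{pro:largerp} to upgrade $\rp$ to $(R_{p''})$ for some $p''>p$ so that Propositions \ref{pro:lqexist} and \ref{pro:yrep} apply at the desired exponent. Your explicit remark that $S\in\mpee$ follows by taking $\tau=0$ in the definition of $\rp$ is a small but welcome clarification of a point the paper treats as known.
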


\begin{proof}
(1) $\Implies$ (2): If $p = 1$, this implication is given directly by Propsitions \ref{pro: yrep1} and \ref{pro:linfexist}. If $p > 1$, then by Proposition \ref{pro:largerp} there eists $p' > p$ such that $S$ satisfies $(R_{p'})$, and then well-posedness of \eqref{lin-bsde-1} in $\sque$ follows from Propositions \ref{pro:yrep} and \ref{pro:lqexist}.
\newline \newline
(2) $\Implies$ (3) is obvious. 
\newline \newline
(3) $\Implies$ (1) is given by Proposition \ref{pro:rpfromeqn}. 
\end{proof}

Very similar arguments let us state the following regarding strong well-posedness in $\sinf$. 

\begin{proposition} \label{pro:linfexists}
Suppose that $S$ satisfies $\rp$ for some $p > 1$ and that $S$ is a true martingale. Then BSDE($\kA$) is strongly well-posed in $\sinf$. Moreover, $\opnorm{S_{\kA}} \leq C$, $\const{\rpc(S), \norm{\kA}_{\bmo}}$. 
\end{proposition}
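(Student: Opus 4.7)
The plan is to follow the construction from the proof of Proposition \ref{pro:linfexist} with the candidate $Y_t = S_t^{-1}\bE_t[S_T(\xi + \int_t^T \beta_u du)]$ and push it through under the weaker assumption $\beta \in \bmoh$ rather than $\beta \in \loi$. The extraction of $\kZ$ via It\^o's formula together with the dynamics of $S^{-1}$, and the verification that $(Y,\kZ)$ solves BSDE($\kA$), carry over verbatim since $S$ is a true martingale. The new content is the $\sinf$ bound on $Y$ and the $\bmo$ bound on $\kZ$, together with uniqueness.

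The key step is bounding the contribution from $\beta$. Since $S$ is a true martingale, Fubini and the tower property give, for $u\geq t$, $\bE_t[S_t^{-1}S_T \beta_u] = \bE_t[S_t^{-1} \bE_u[S_T]\beta_u] = \bE_t[S_t^{-1}S_u \beta_u]$, and consequently
\begin{align*}
S_t^{-1}\bE_t\Big[S_T \int_t^T \beta_u du\Big] \;=\; \bE_t\Big[\int_t^T S_t^{-1}S_u\, \beta_u\, du\Big].
\end{align*}
Applying the conditional H\"older inequality with exponents $p$ and $q$ (the conjugate of $p$) yields
\begin{align*}
\Big|\bE_t\Big[\int_t^T S_t^{-1}S_u \beta_u du\Big]\Big| \;\leq\; \bE_t\Big[\sup_{t\leq u\leq T}|S_t^{-1}S_u|^p\Big]^{1/p} \bE_t\Big[\Big(\int_t^T |\beta_u|du\Big)^q\Big]^{1/q}.
\end{align*}
The first factor is controlled by $\tfrac{p}{p-1}\rpc(S)^{1/p}$ via Proposition \ref{pro:strongerrh}. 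The second factor is controlled by the classical Garsia inequality applied to the increasing process $A_t = \int_0^t |\beta_u|du$: its $\bmoh$ norm dominates $\bE_\tau[A_T - A_\tau]$ uniformly in $\tau$, from which $\bE_t[(A_T - A_t)^q]^{1/q} \leq C_q \|\beta\|_{\bmoh}$ follows by induction (or, for non-integer $q$, by Jensen from the integer case). Combined with the trivial bound $|S_t^{-1}\bE_t[S_T\xi]|\leq \rpc(S)^{1/p}\|\xi\|_{\linf}$, this gives $\|Y\|_{\sinf}\leqc \|\xi\|_{\linf} + \|\beta\|_{\bmoh}$, with constants depending only on $\rpc(S)$ and $p$.

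The $\bmo$ bound for $\kZ$ proceeds exactly as in the proof of Proposition \ref{pro:linfexist} by applying It\^o's formula to $|Y|^2$: the only additional term is $2\bE_t[\int_t^T Y_s \cdot \beta_s ds]$, bounded directly by $2\|Y\|_{\sinf}\|\beta\|_{\bmoh}$ from the definition of $\bmoh$. For uniqueness, let $(Y,\kZ)\in \sinf\times\bmo$ be any solution. Lemma \ref{lem: sylocalmart} gives that $S(Y + \int_0^\cdot \beta du)$ is a local martingale, dominated by $S^*\big(\|Y\|_{\sinf}+\int_0^T |\beta_u|du\big)$. Since $S\in \mpee$ (by reverse H\"older), Doob gives $S^*\in\lpee$, and the Garsia inequality gives $\int_0^T|\beta_u|du\in\lque$; H\"older then yields integrability of the dominator, so $S(Y + \int_0^\cdot\beta du)$ is of class (DL) and hence a true martingale. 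The argument of Proposition \ref{pro: yrep1} now produces $Y_t = S_t^{-1}\bE_t[S_T(\xi + \int_t^T \beta_u du)]$, proving uniqueness and confirming that $\opnorm{S_\kA}_{\infty,s}$ depends only on $\rpc(S)$ and $\|\kA\|_{\bmo}$. The main obstacle is that $\int_0^T|\beta_u|du$ is no longer bounded but only in every $\lque$; this is precisely what the Garsia-type inequality for $\bmoh$ delivers, once the martingale property of $S$ is used to commute conditional expectations past the $du$ integral.
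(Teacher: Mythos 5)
Your proposal is correct and is precisely the ``very similar arguments'' that the paper leaves unwritten for this proposition: the candidate $Y_t = S_t^{-1}\bE_t[S_T(\xi+\int_t^T\beta_u\,du)]$ from Proposition \ref{pro:linfexist}, with the only genuinely new ingredient being the treatment of $\beta\in\bmoh$ via the commutation $S_t^{-1}\bE_t[S_T\beta_u]=S_t^{-1}\bE_t[S_u\beta_u]$ (valid since $S$ is a true martingale), conditional H\"older against the maximal estimate of Proposition \ref{pro:strongerrh}, and the Garsia/energy inequality to convert the $\bmoh$ bound into conditional $\lque$ control of $\int_t^T|\beta_u|\,du$. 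The uniqueness argument via class (D) domination of $S(Y+\int_0^\cdot\beta\,du)$ is also the intended one, so no gaps.
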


\section{Structural conditions} 
\label{sec:structural}

In this section, we explain how the equivalent conditions appearing in Theorem \ref{thm:main} can be verified under various structural conditions on $\kA$. Several results in this sections will be straightforward adaptations of some results in \cite{jackson2021existence}, and so will be given without proof. We emphasize that, as evidenced by the proofs of Propositions \ref{pro:rop} and \ref{pro:lop}, it is sometimes easier to verify well-posedness directly (and get the reverse H\"older inequality as a corollary), and sometimes easier to verify the reverse H\"older inequality (and get well-posedness as a corollary).

Here we define the structural conditions which we study. 

\begin{definition}
Let $\kA$ be a matrix in $\bmo((\R^d)^{n \times n})$. We say that $\kA$
\begin{itemize}
    \item is \textbf{lower triangular} if $\kA^i_j = 0$ whenever $j > i$
    \item has \textbf{right outer-product structure} if there exist $\ka \in \bmo((\R^d)^n)$ and $b \in \R^n$ such that $\kA^i_j = \ka^i b_j$, and in this case we write $\kA = \ka b^T$
    \item has \textbf{left outer-product structure} if there exist $a \in \R^n$ and $\kb \in \bmo((\R^d)^n)$ such that $\kA^i_j = a_i \kb^j$, and in this case we write $\kA = a \kb^T$. 
\end{itemize} 
\end{definition}

We also recall the definition of sliceability. 

\begin{definition}
Let $\gamma \in \bmo$. We say that $\gamma$ is sliceable if for each $\epsilon > 0$, there is an $n \in \N$ sequence of stopping times $0 = \tau_0 \leq \tau_1 ... \leq \tau_n = T$ such that for each $i \in \{1,...,n\}$, $\norm{\gamma 1_{[\tau_{i-1}, \tau_i]}}_{\bmo} < \epsilon$. 
\end{definition}

Lower triangular matrices were studied in \cite{jackson2021existence}, and the following proposition is a slight adaptation of Proposition  2.6 of \cite{jackson2021existence}. 

\begin{proposition} \label{pro:triangular}
If $\kA$ is lower-triangular, then there are constants $q^* = q^*(\norm{\kA}_{\bmo})$, $C_q = C_q(\norm{\kA}_{\bmo})$ such that BSDE($\kA$) is well-posed in $\sque$ for each $q > q^*$, and moreover $\opnorm{S_A}_q \leq C_q$.
\end{proposition}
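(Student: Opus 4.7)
The plan is induction on the dimension $n$, with the base case $n = 1$ being exactly Theorem \ref{thm:1dwellposed}. For the inductive step, I would write the lower triangular $\kA$ in block form
\[
\kA = \begin{pmatrix} \kA_0 & 0 \\ \kv^T & \ka \end{pmatrix},
\]
where $\kA_0 \in \bmo((\R^d)^{(n-1)\times(n-1)})$ is lower triangular, $\ka \in \bmo(\R^d)$ is scalar, and $\kv \in \bmo((\R^d)^{n-1})$, each with bmo norm bounded by $\norm{\kA}_{\bmo}$.

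Rather than attempting to solve BSDE($\kA$) directly by induction---where the off-diagonal term $\kv \cdot \kZ'$ entering the last scalar equation is a priori only controlled at the same exponent $q$, leaving no room for a H\"older split---I would invoke Theorem \ref{thm:main} and prove instead the equivalent statement that $S \coloneqq \mathcal{E}(\int \kA d\kB)$ is a true martingale satisfying $(\text{R}_p)$ for some $p = p(\norm{\kA}_{\bmo}) > 1$. Combined with Theorem \ref{thm:main} and Corollary \ref{cor:wellposedsbiggerq}, this yields well-posedness of BSDE($\kA$) in $\sque$ for every $q > q^* \coloneqq p/(p-1)$. Since $\kA$ is lower triangular, $S$ is as well: writing $S = \begin{pmatrix} S_0 & 0 \\ R & s \end{pmatrix}$, matching blocks in $dS = S\kA\,d\kB$ forces $S_0 = \mathcal{E}(\int \kA_0 d\kB)$, $s = \mathcal{E}(\int \ka d\kB)$, and $dR = (R\kA_0 + s \kv^T)d\kB$ with $R_0 = 0$. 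By the inductive hypothesis applied to $\kA_0$ together with Proposition \ref{pro:rpfromeqn}, $S_0$ is a true martingale satisfying $(\text{R}_{p_0})$ for some $p_0 = p_0(\norm{\kA}_{\bmo}) > 1$; by Theorem \ref{thm:1drh}, $s$ is a true martingale satisfying $(\text{R}_{p_1})$ for some $p_1 = p_1(\norm{\kA}_{\bmo}) > 1$.

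The inverse $S^{-1}$ is also block lower triangular, so a direct computation gives
\[
S_\tau^{-1} S_T = \begin{pmatrix} S_{0,\tau}^{-1} S_{0,T} & 0 \\ s_\tau^{-1}\bigl(R_T - R_\tau S_{0,\tau}^{-1} S_{0,T}\bigr) & s_\tau^{-1} s_T \end{pmatrix}.
\]
The two diagonal blocks are controlled in conditional $L^p$ by the reverse H\"older estimates for $S_0$ and $s$ and the conditional H\"older inequality. The main obstacle will be estimating the off-diagonal block $s_\tau^{-1}\bigl(R_T - R_\tau S_{0,\tau}^{-1} S_{0,T}\bigr)$ in conditional $L^p$ with constants depending only on $\norm{\kA}_{\bmo}$. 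I would tackle this by applying variation of parameters to the linear SDE for $R$ (equivalently, a Girsanov transformation using $S_0$) to represent $R$ as a stochastic integral whose integrand involves $s$, $\kv$, and $S_0^{-1}$, and then combining BDG on $[\tau,T]$ with the John--Nirenberg exponential estimates for $\kv \in \bmo$ and the conditional moment bounds on $s$ and $S_0$ coming from their reverse H\"older properties (with Proposition \ref{pro:largerp} providing a small amount of slack in the exponents). Choosing $p \in (1, p_0 \wedge p_1)$ sufficiently close to $1$ allows all the H\"older exponents to balance and absorbs every constant into one depending only on $\norm{\kA}_{\bmo}$. Once $(\text{R}_p)$ is established for $S$, Theorem \ref{thm:main} and Corollary \ref{cor:wellposedsbiggerq} deliver well-posedness in $\sque$ for all $q > q^*$, closing the induction.
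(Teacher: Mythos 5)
The paper does not prove this proposition from scratch: it is obtained as a slight adaptation of Proposition 2.6 of \cite{jackson2021existence}, whose argument is the direct row-by-row induction you explicitly set aside. There the first row of the lower-triangular system is a scalar BSDE handled by Theorem \ref{thm:1dwellposed}, and for each subsequent row the already-solved components enter only through the inhomogeneity $\sum_{j<i}\kA^i_j\cdot\kZ^j$, which lies in $\loq$ with $\norm{\int_0^T|\kA^i_j||\kZ^j|\,dt}_{\lque}\lesssim\norm{\kA^i_j}_{\bmo}\norm{\kZ^j}_{\ltq}$ by the Fefferman--Garsia (energy) inequality for $\bmo$ integrands. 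So your stated reason for abandoning that route --- that the off-diagonal term is ``only controlled at the same exponent $q$, leaving no room for a H\"older split'' --- is a misconception: no loss of exponent occurs, the induction closes at a single $q>\max_i q^*(\norm{\kA^i_i}_{\bmo})$, and the reverse H\"older inequality for $S$ then follows from Proposition \ref{pro:rpfromeqn}. This direction (BSDE first, exponential second) is exactly the one the paper flags as the natural one for this structural class.

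The route you substitute has a genuine gap at the one place where the difficulty actually lives: the conditional $L^p$ estimate of the off-diagonal block $s_\tau^{-1}\bigl(R_T-R_\tau S_{0,\tau}^{-1}S_{0,T}\bigr)$. Setting $\tilde R_t=R_t-R_\tau S_{0,\tau}^{-1}S_{0,t}$, one has $d\tilde R=(\tilde R\,\kA_0+s\,\kv^T)\,d\kB$ with $\tilde R_\tau=0$, and neither of your proposed tools closes this. A direct BDG--Gronwall estimate fails because the self-interaction term produces $\bE_\tau[\sup|\tilde R|^p(\int_\tau^T|\kA_0|^2\,dt)^{p/2}]$, and $\int_\tau^T|\kA_0|^2\,dt$ is not essentially bounded for a general (non-sliceable) $\bmo$ coefficient; any H\"older split raises the exponent on $\tilde R$ and the loop does not close. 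Variation of parameters instead represents $\tilde R_T$ through integrals involving $S_{0,u}^{-1}S_{0,\tau}$ (or $S_{0,u}^{-1}S_{0,T}$) for intermediate $u$, i.e.\ the \emph{inverse} flow of $S_0$; the reverse H\"older inequality $(\text{R}_{p_0})$ supplied by your inductive hypothesis controls $S_{0,\tau}^{-1}S_{0,u}$ but says nothing about the operator norm of its inverse (which is governed by the smallest singular value), and the process $S_0^{-1}$ solves an SDE with drift, so it is not itself a stochastic exponential to which the hypothesis applies. In dimension one this is rescued by Girsanov (the inverse of a scalar exponential is again an exponential under an equivalent measure), but that device is unavailable for the matrix block $S_0$ --- which is precisely why the multidimensional problem is hard. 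You would also still need to verify separately that $S$ is a true martingale, since Theorem \ref{thm:main} requires both properties and $(\text{R}_p)$ alone does not obviously supply it. As written, the proposal therefore does not constitute a proof; the row-by-row BSDE induction is the argument that actually works.
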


Next, we give analogous results for matrices with left or right outer-product structure.
\begin{proposition} \label{pro:rop}
If $\kA = \ka b^T$ has right outer-product structure, then there exists $q^* = q^*(\norm{\ka}_{\bmo}, |b|)$ and constants $C_q = C_q(\norm{\ka}_{\bmo}, |b|)$ such that BSDE($\kA$) is well-posed in $\sque$ for each $q > q^*$, and
    $\norm{S_A}_{q} \leq C_q$.

\end{proposition}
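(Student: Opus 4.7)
The plan is to reduce the $n$-dimensional linear system to a one-dimensional linear BSDE by projecting along $b$. Setting $\tilde Y := b^T Y \in \R$ and $\tilde Z := b^T \kZ = \sum_j b_j \kZ^j \in \R^d$, and using the identity $(\kA \kZ)^i = \ka^i \cdot (b^T \kZ) = \ka^i \cdot \tilde Z$, the pair $(\tilde Y, \tilde Z)$ satisfies the scalar linear BSDE
\begin{align*}
    \tilde Y = b^T \xi + \int_{\cdot}^T \big( (b^T \ka) \cdot \tilde Z + b^T \beta \big) dt - \int_{\cdot}^T \tilde Z \, d\kB,
\end{align*}
whose driver coefficient $b^T \ka = \sum_j b_j \ka^j$ lies in $\bmo(\R^d)$ with $\norm{b^T \ka}_{\bmo} \leq |b| \, \norm{\ka}_{\bmo}$. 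Theorem \ref{thm:1dwellposed} then supplies a threshold $q^* = q^*(|b| \, \norm{\ka}_{\bmo})$ such that this scalar BSDE is well-posed in $\sque$ for every $q > q^*$, with constants depending only on $q$, $|b|$, and $\norm{\ka}_{\bmo}$; this is the $q^*$ I take in the proposition.

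Uniqueness in $\sque$ for BSDE$(\kA)$ follows easily: any solution $(Y,\kZ)$ of the homogeneous equation projects to a solution $(\tilde Y, \tilde Z)$ of the homogeneous scalar BSDE, which vanishes by the scalar theory; substituting $\tilde Z = 0$ back into the $i$-th component gives $Y^i = -\int_{\cdot}^T \kZ^i \, d\kB$, a martingale with zero terminal value, so $Y^i = 0$ and $\kZ^i = 0$ for each $i$. For existence, given $(\xi, \beta) \in \lque \times \loq$, I would first solve the scalar BSDE to obtain $(\tilde Y, \tilde Z) \in \sque \times \ltq$ with $\norm{\tilde Z}_{\ltq} \leq C\big(\norm{\xi}_{\lque} + \norm{\beta}_{\loq}\big)$. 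Setting $g^i := \ka^i \cdot \tilde Z + \beta^i$, I would then define
\begin{align*}
    Y^i_t := \bE_t\Big[\xi^i + \int_t^T g^i_s \, ds\Big],
\end{align*}
and read off $\kZ^i$ from the martingale representation of the martingale $\bE_{\cdot}[\xi^i + \int_0^T g^i_s \, ds]$; a direct It\^o check confirms that $(Y, \kZ)$ solves BSDE$(\kA)$.

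The main obstacle is the $L^q$ control of the coupling term, namely the estimate
\begin{align*}
    \Big\| \int_0^T |\ka^i \cdot \tilde Z_s| \, ds \Big\|_{\lque} \leq C_q \, \norm{\ka^i}_{\bmo} \, \norm{\tilde Z}_{\ltq},
\end{align*}
which I intend to secure via the Fefferman-type $L^q$ inequality bounding the absolutely continuous cross-variation of a $\BMO$ martingale with an $\mpee^q$-martingale by the product of their norms. Once this is in place, Doob's maximal inequality yields a bound on $\norm{Y^i}_{\sque}$ and the BDG inequality yields one on $\norm{\kZ^i}_{\ltq}$, both in terms of $\norm{\xi}_{\lque} + \norm{\beta}_{\loq}$ with constants depending only on $q$, $|b|$, and $\norm{\ka}_{\bmo}$. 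Summing in $i$ delivers well-posedness of BSDE$(\kA)$ in $\sque$ with $\opnorm{S_{\kA}}_q \leq C_q$; as a corollary via Theorem \ref{thm:main} the exponential $S$ is then automatically a true martingale satisfying $\rp$ for $p$ the conjugate of $q$, consistent with the heuristic that for right outer-product matrices the BSDE is easier to analyze directly than $S$.
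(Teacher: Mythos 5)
Your approach is the same as the paper's: project along $b$ to reduce to the scalar BSDE with coefficient $b^T\ka \in \bmo$, invoke Theorem \ref{thm:1dwellposed}, and then reconstruct the full solution componentwise by martingale representation with the known drift $\ka\cdot\tilde Z + \beta$; the uniqueness argument via the homogeneous projection is also how the paper handles it, and the Fefferman-type $L^q$ bound you flag is exactly the ``standard techniques'' the paper appeals to for estimating $(Y,\kZ)$ from the auxiliary equation.

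There is one step you gloss over which is in fact the crux of why the decoupling closes. In the existence construction, the ``direct It\^o check'' only confirms that your $(Y,\kZ)$ solves the equation with drift $\ka\cdot\tilde Z + \beta$, where $\tilde Z$ is the solution of the \emph{scalar} equation. To conclude that $(Y,\kZ)$ solves BSDE$(\kA)$ you need $\ka^i\cdot\tilde Z = \ka^i\cdot(b^T\kZ)$, i.e.\ the identification $b^T\kZ = \tilde Z$ --- and at this point $\tilde Z$ and $b^T\kZ$ are a priori different objects (the identity $\tilde Z = b^T\kZ$ from your first paragraph was derived from a \emph{given} solution, which is not yet available). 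The fix is short and is what the paper does explicitly: apply $b^T$ to the constructed equation and observe that both $(b^TY, b^T\kZ)$ and $(\tilde Y,\tilde Z)$ solve the same scalar martingale-representation equation with the fixed drift $(b^T\ka)\cdot\tilde Z + b^T\beta$ and terminal value $b^T\xi$; uniqueness of martingale representation then forces $b^T\kZ = \tilde Z$. You should state this identification explicitly rather than fold it into the It\^o check. With that one line added, your argument matches the paper's proof.
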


\begin{proof}
By Theorem \ref{thm:1dwellposed}, we may choose $q^* = q^*(\norm{\ka}_{\bmo}, |b|)$ such that the one-dimensional BSDE 
\begin{align} \label{1deqn}
    U_{\cdot} = \eta + \int_{\cdot}^T  (b^T \ka) \kV  dt - \int_{\cdot}^T \kV d \kB.
\end{align}
is well-posed in $\sques$. 
Given $\xi \in \lques$, define $(U, \kV) \in \sques \times \ltqs$ to be the unique solution to \eqref{1deqn} with $\eta = b^T \xi$. Next, let $(Y, \kZ)$ solve the system
\begin{align} \label{ydef}
    Y_{\cdot} = \xi + \int_{\cdot}^T \ka \kV  dt - \int_{\cdot}^T \kZ d \kB.
\end{align}
We now claim that $\kV = b^T \kZ$. Indeed, both $(U,\kV)$ and $(b^T Y, b^T \kZ)$ solve the ``martingale representation equation" 
\begin{align*}
    \tilde{Y}_{\cdot} = b^T \xi + \int_{\cdot}^T \big( (b^T \ka) \kV + b^T \beta) dt - \int_{\cdot}^T \tilde{\kZ} d \kB,
\end{align*}
for which uniqueness holds. It follows that $(Y,\kZ)$ solves \eqref{lin-bsde-1}. The well-posedness of \eqref{1deqn} provides an estimate on $\kV$ of the form $\norm{\kV}_{\ltqs} \leqc \norm{b^T \xi}_{\lques}$, $\const{\norm{b^T\ka}_{\bmo}}$ and the necessary estimates on $(Y,\kZ)$ follow from \eqref{ydef} through standard techniques. Uniqueness can similarly be obtained from uniqueness for the equations  \eqref{1deqn} and \eqref{ydef}. This shows that BSDE($\kA$) is well-posed in $\sques$. The full result follows from Corollary \ref{cor:wellposedsbiggerq}. 
\end{proof}

 For $\klambda \in (\R^d)^n$, we write $\text{Diag}(\klambda)$ to mean the process in $\bmo((\R^d)^{n \times n})$ with $\big(\text{Diag}(\klambda)\big)^i_j = \delta_{ij} \klambda$. The following slight extension of Proposition \ref{pro:rop} will be useful in the analysis of quadratic-linear drivers in Section \ref{sec:ql}. The proof is almost identical, so is omitted. 

\begin{proposition} \label{pro:outerdiag}
Suppose that $\kA \in \bmo((\R^d)^{n \times n})$ can be written in the form 
\begin{align*}
    \kA = \ka b^T + \text{Diag}(\klambda), 
\end{align*}
where $\ka \in \bmo(\R^n)$, $b \in \R^n$, and $\klambda \in \bmo(\R^d)$. Then there exists $q = q(\norm{a}_{\bmo}, |b|, \norm{\lambda}_{\bmo})$ such that BSDE($\kA$) is well-posed in $\sque$, and we have 
\begin{align*}
    \opnorm{S_A}_{q} \leq C, \, \, \const{\norm{a}_{\bmo}, |b|, \norm{\klambda}_{\bmo}}.  
\end{align*}
\end{proposition}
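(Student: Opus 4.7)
The plan is to mirror the proof of Proposition \ref{pro:rop}, but with the diagonal perturbation $\text{Diag}(\klambda)$ absorbed into the ambient one-dimensional linear BSDE rather than disappearing entirely. The key observation is that $b^T \text{Diag}(\klambda) \kZ = \klambda \cdot (b^T\kZ)$, so contracting the system against $b$ still produces a closed scalar equation in $b^T Y$ and $b^T \kZ$.

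First, using Theorem \ref{thm:1dwellposed} applied to each of the one-dimensional BMO coefficients $b^T\ka + \klambda$ and $\klambda$, I would choose a single $q^* = q^*(\norm{\ka}_{\bmo}, |b|, \norm{\klambda}_{\bmo})$ so that for every $q > q^*$ the scalar BSDEs driven by either coefficient are well-posed in $\sque \times \ltq$. Given $(\xi, \beta) \in \lque \times \loq$ (or the corresponding $\sinf$-type data), I would then define $(U, \kV) \in \sque \times \ltq$ as the unique solution of
\begin{align*}
U_{\cdot} = b^T\xi + \int_{\cdot}^T \bigl((b^T\ka + \klambda)\cdot \kV + b^T \beta\bigr) dt - \int_{\cdot}^T \kV\, d\kB,
\end{align*}
and define $(Y, \kZ) \in \sque \times \ltq$ component-wise as the unique solution of the decoupled scalar BSDEs
\begin{align*}
Y^i_{\cdot} = \xi^i + \int_{\cdot}^T \bigl(\ka^i \cdot \kV + \klambda \cdot \kZ^i + \beta^i\bigr) dt - \int_{\cdot}^T \kZ^i\, d\kB, \quad i = 1,\dots,n,
\end{align*}
each of which is well-posed by Theorem \ref{thm:1dwellposed} since the forcing $\ka^i\cdot \kV + \beta^i$ lies in $\loq$ (with norm controlled by $\norm{\ka}_{\bmo}$, $\norm{\kV}_{\ltq}$, and $\norm{\beta}_{\loq}$).

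Next, I would verify that $\kV = b^T \kZ$ by a one-dimensional uniqueness argument, exactly as in Proposition \ref{pro:rop}. Applying $b^T$ to the $Y$-equation and using $b^T\text{Diag}(\klambda)\kZ = \klambda \cdot (b^T\kZ)$, one sees that both $(U, \kV)$ and $(b^T Y, b^T \kZ)$ solve the same linear scalar BSDE
\begin{align*}
W_{\cdot} = b^T\xi + \int_{\cdot}^T \bigl((b^T\ka)\cdot \kV + \klambda\cdot \tilde{W} + b^T\beta\bigr) dt - \int_{\cdot}^T \tilde{W}\, d\kB,
\end{align*}
where $\kV$ is now frozen as an external forcing. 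By the uniqueness half of Theorem \ref{thm:1dwellposed} applied to the BMO coefficient $\klambda$, one concludes $b^T Y = U$ and $b^T \kZ = \kV$, and substituting back shows that $(Y, \kZ)$ solves BSDE$(\kA)$. Uniqueness of any $\sque \times \ltq$-solution follows the same way: given a candidate, contracting with $b$ produces a solution of the scalar equation that is pinned down by Theorem \ref{thm:1dwellposed}, after which each $Y^i$-equation has its forcing determined, so the decoupled scalar uniqueness closes the argument.

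The estimates chain together in the obvious way: Theorem \ref{thm:1dwellposed} first controls $\norm{U}_{\sque} + \norm{\kV}_{\ltq}$ in terms of $\norm{b^T\xi}_{\lque} + \norm{b^T\beta}_{\loq}$ with a constant depending only on $\norm{b^T\ka + \klambda}_{\bmo}$, and then the $n$ scalar equations for the $Y^i$ give the bound on $(Y, \kZ)$ with constants depending only on $\norm{\klambda}_{\bmo}$ and $\norm{\ka}_{\bmo}$, $|b|$. All constants are controlled by the three quantities $\norm{\ka}_{\bmo}$, $|b|$, $\norm{\klambda}_{\bmo}$ as required, yielding the bound on $\opnorm{S_{\kA}}_q$. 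I do not anticipate any genuine obstacle; the only thing to keep an eye on is the dimensional bookkeeping in the contraction $b^T\text{Diag}(\klambda)\kZ = \klambda\cdot(b^T\kZ)$, which is precisely the reason the same trick as in Proposition \ref{pro:rop} continues to work after adding the diagonal term.
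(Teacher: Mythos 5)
Your proposal is correct and is essentially the argument the paper intends: the paper omits the proof of Proposition \ref{pro:outerdiag}, stating it is almost identical to that of Proposition \ref{pro:rop}, and your adaptation — contracting with $b$ via the identity $b^T\mathrm{Diag}(\klambda)\kZ = \klambda\cdot(b^T\kZ)$ to get a closed scalar equation with coefficient $b^T\ka + \klambda$, then solving the decoupled component equations with scalar bmo coefficient $\klambda$ in place of the bare martingale-representation step — is precisely the "almost identical" modification.
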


\begin{proposition} \label{pro:lop}
If $\kA = a \kb^T$ has left outer-product structure, then there exist constants $q^* = q^*(|a|, \norm{\kb}_{\bmo})$ and $C_q = C_q(|a|, \norm{\kb}_{\bmo})$ such that BSDE($\kA$) is well-posed in $\sque$ for each $q > q^*$, and $\opnorm{S_{\kA}}_q \leq C_q$.
\end{proposition}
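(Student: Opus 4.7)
The plan is to compute the stochastic exponential $S$ explicitly, verify that $S$ satisfies $(R_p)$ for some $p > 1$ depending only on $|a|$ and $\norm{\kb}_{\bmo}$, and then invoke Theorem~\ref{thm:main} and Corollary~\ref{cor:wellposedsbiggerq} to conclude. The key observation is that when $\kA = a\kb^T$, the matrix SDE $dS = S\kA\, d\kB$ reads componentwise $dS^i_k = (Sa)^i\kb^k \cdot d\kB$, so forming $Sa$ gives $d(Sa)^i = (Sa)^i(a^T\kb)\cdot d\kB$ and hence $(Sa)^i = a_i E$, where $E := \mathcal{E}(\int (a^T\kb)\, d\kB)$ is the scalar 1D exponential of the bmo process $a^T\kb$ (note $\norm{a^T\kb}_{\bmo} \leq |a|\norm{\kb}_{\bmo}$). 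Theorem~\ref{thm:1drh} then produces $p_0 = p_0(|a|, \norm{\kb}_{\bmo}) > 1$ such that $E$ is a true martingale satisfying $(R_{p_0})$, and $S$ admits the rank-one representation $S_t = I + aV_t^T$ where $V^k_t := \int_0^t E_s \kb^k_s \cdot d\kB_s$, together with the identity $1 + a^T V = E$.

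Applying Sherman--Morrison yields $S_t^{-1} = I - (aV_t^T)/E_t$, and a short algebraic simplification using $a^T V_\tau = E_\tau - 1$ produces the clean identity
\begin{align*}
    S_\tau^{-1} S_T = I + \frac{a(V_T - V_\tau)^T}{E_\tau}.
\end{align*}
Since $a(V_T - V_\tau)^T$ has rank one, its operator norm is $|a||V_T - V_\tau|$, so proving $(R_p)$ reduces to bounding $\bE_\tau[|V_T - V_\tau|^p/E_\tau^p]$ uniformly in $\tau$. Setting $W_s := E_s/E_\tau$ on $[\tau, T]$ (itself a 1D exponential restarted at $\tau$), the shifted process $(V_\cdot - V_\tau)/E_\tau$ coincides with $\int_\tau^\cdot W\kb\, d\kB$, whose quadratic variation is $\int_\tau^T W^2 |\kb|^2\, du$ with $|\kb|^2 := \sum_k |\kb^k|^2$. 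The vector-valued BDG inequality, the crude bound $W \leq \sup_s W_s$, and a H\"older split with $1/r_1 + 1/r_2 = 1$ give
\begin{align*}
    \bE_\tau\bigl[|V_T - V_\tau|^p/E_\tau^p\bigr] \leqc \bE_\tau\bigl[(\sup_s W_s)^{pr_1}\bigr]^{1/r_1}\bE_\tau\bigl[(\tint_\tau^T |\kb|^2\, ds)^{pr_2/2}\bigr]^{1/r_2}.
\end{align*}

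Choosing $p > 1$ small and $r_1 > 1$ with $pr_1 \leq p_0$, the first factor is uniformly bounded in $\tau$ by Proposition~\ref{pro:strongerrh} applied to the restarted exponential $W$, and the second is bounded for any finite $m = pr_2/2$ by the John--Nirenberg exponential integrability of the $\bmo$ process $|\kb|^2$; all constants depend only on $|a|$ and $\norm{\kb}_{\bmo}$, so $S$ satisfies $(R_p)$. To close, I would verify that $S$ is a true martingale by running the same BDG-and-H\"older estimate globally (taking $\tau = 0$) to show $\bE[\sup_t |V_t|]<\infty$, whence $\bE[\sup_t |S_t|]<\infty$ via $|S_t| \leq 1 + |a||V_t|$. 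Theorem~\ref{thm:main} combined with Propositions~\ref{pro:yrep} and~\ref{pro:lqexist} then yields well-posedness of BSDE$(\kA)$ in $\sque$ for $q$ the conjugate of $p$, and Corollary~\ref{cor:wellposedsbiggerq} extends this to all larger $q$ with the required bound on $\opnorm{S_\kA}_q$. The main technical obstacle is the exponent juggling in the conditional H\"older split -- one must choose $p > 1$ and $r_1 > 1$ subject to $pr_1 \leq p_0(|a|,\norm{\kb}_{\bmo})$ and then check that every constant ends up depending only on $|a|$ and $\norm{\kb}_{\bmo}$; the rank-one identity $S - I = aV^T$, which is forced by the left outer-product structure, is what makes this explicit computation tractable.
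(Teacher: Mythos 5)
Your proposal is correct and follows the same skeleton as the paper's proof: both exploit the fact that $Sa$ solves a scalar exponential SDE driven by $a^T\kb$ (equivalently $\kb^T a$), apply Theorem \ref{thm:1drh} to that one-dimensional exponential, upgrade to a reverse H\"older inequality for $S$ itself, and close via Theorem \ref{thm:main}. Where you differ is in the middle step. The paper passes from $\norm{Sa}_{\spee}\le C$ to $\norm{S}_{\spee}\le C$ by citing Lemma 1.4 of Delbaen--Tang, and then obtains the conditional estimate $\bE_\tau[\sup_{\tau\le t\le T}|S_\tau^{-1}S_t|^p]\le C$ by a conditioning argument modelled on Kazamaki's Theorem 3.1, both invoked without detail. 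You instead make everything explicit: the rank-one identity $S=I+aV^T$ with $1+a^TV=E$, Sherman--Morrison for $S_\tau^{-1}$, and the resulting formula $S_\tau^{-1}S_T=I+a(V_T-V_\tau)^T/E_\tau$ reduce the reverse H\"older inequality to a conditional moment bound that you control with BDG, a H\"older split, Proposition \ref{pro:strongerrh} applied to the restarted scalar exponential, and the John--Nirenberg moment bounds for $\int_\tau^T|\kb|^2\,dt$. This buys a self-contained argument that tracks the dependence of all constants on $|a|$ and $\norm{\kb}_{\bmo}$ transparently, at the cost of some exponent bookkeeping (one needs $p,r_1>1$ with $pr_1\le p_0$, always achievable since $p_0>1$, e.g. $p=r_1=\sqrt{p_0}$). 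Your separate verification that $S$ is a true martingale via $\bE[\sup_t|V_t|]<\infty$ is also needed and correctly handled; in the paper this is folded into the conditioning argument.
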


\begin{proof}
Since $\kA = a \kb^T$, the equation for the stochastic exponential $S$ reads 
\begin{align} \label{lopeqn}
    \begin{cases}
    dS = Sa \kb^T d\kB, \\
    S_0 = I_{n \times n}. 
    \end{cases}
\end{align}
Therefore the process $Sa$ takes values in $\R^n$, and satisfies 
\begin{align*}
    \begin{cases} 
    d(Sa) = (Sa) (\kb^T a) d\kB, \\
    (Sa)_0 = a.
    \end{cases}
\end{align*}
That is, for $i \in \{1,...,n\}$, we have $(Sa)_i = a_i \mathcal{E}(\int (\kb^T a) d \kB)$. Since $\norm{\kb^T a}_{\bmo} \leq |a|\norm{\kb}_{\bmo}$, we have by Theorem \ref{thm:1drh} that there exists $p = p(|a|,\norm{\kb})$ such that 
\begin{align*}
    \norm{Sa}_{\spee} \leq C, \, \, \const{|a|, \norm{\kb}}. 
\end{align*}
It now follows from Lemma 1.4 of \cite{Delbaen-Tang} (and \eqref{lopeqn}) that $\norm{S}_{\spee} \leq C$, $\const{|a|, \norm{\kb}_{\bmo}}$. A conditioning argument (see the proof of Theorem 3.1 in \cite{Kazamaki} for an example) allows us to conclude that in fact $S$ satisfies the inequality 
\begin{align*}
\bE[\sup_{ \tau \leq t \leq T} |S_{\tau}^{-1}S_t|^p] \leq C, \, \, \const{|a|, \norm{\kb}_{\bmo}}, 
\end{align*}
and so $S$ is a true martingale satisfying $\rp$.
The result now follows from Theorem \ref{thm:main}. 
\end{proof}

Once again, it useful to note that we can add a diagonal matrix, and the same argument goes through. 

\begin{proposition} \label{pro:lopdiag}
If $\kA = a \kb^T + Diag(\lambda)$, where $a \in \R^n$, $\kb \in \bmo(\R^d)^n$, and $\lambda \in \bmo(\R^d)$, then there exist constants $q^* = q^*(|a|, \norm{\kb}_{\bmo}, \norm{\lambda}_{\bmo})$ and $C_q = C_q(|a|, \norm{\kb}_{\bmo})$ such that BSDE($\kA$) is well-posed in $\sque$ for each $q > q^*$, and $\opnorm{S_{\kA}}_q \leq C_q$.
\end{proposition}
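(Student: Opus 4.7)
The plan is to mirror the proof of Proposition \ref{pro:lop}, using a Girsanov change of measure to absorb the diagonal perturbation and reduce to the pure left-outer-product case treated there.

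Set $L = \mathcal{E}(\int \lambda \, d\kB)$, which by Theorem \ref{thm:1drh} is a uniformly integrable $\bP$-martingale satisfying $(\text{R}_{p_0})$ for some $p_0 > 1$ depending only on $\norm{\lambda}_{\bmo}$. Let $\bQ$ be the probability measure with density $L_T$ with respect to $\bP$, so that $\tilde{\kB} := \kB - \int \lambda \, dt$ is a $\bQ$-Brownian motion. A direct It\^o computation (using $dS^i_j = (Sa)_i \kb^j \, d\kB + S^i_j \lambda \, d\kB$ and the fact that $L$ is scalar) shows that $\tilde{S} := L^{-1} S$ satisfies the matrix SDE
\begin{align*}
    d\tilde{S} = \tilde{S} \, a\kb^T \, d\tilde{\kB}, \quad \tilde{S}_0 = I_{n \times n},
\end{align*}
so $\tilde{S}$ is the stochastic exponential of $\int a \kb^T \, d\tilde{\kB}$ under $\bQ$. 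By Kazamaki's $\bmo$-preservation theorem (\cite{Kazamaki}, Theorem 3.3), $\kb \in \bmo(\bQ)$ with norm controlled by $\norm{\kb}_{\bmo}$ and $\norm{\lambda}_{\bmo}$. Hence Proposition \ref{pro:lop} applied under $\bQ$ yields that $\tilde{S}$ is a true $\bQ$-martingale satisfying $(\text{R}_{p_1})$ under $\bQ$ for some $p_1 > 1$ whose associated constant depends only on $|a|$, $\norm{\kb}_{\bmo}$, and $\norm{\lambda}_{\bmo}$.

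Finally, since $S_T S_\tau^{-1} = (L_T/L_\tau) \, \tilde{S}_T \tilde{S}_\tau^{-1}$, one combines the reverse H\"older bound $(\text{R}_{p_0})$ for $L$ under $\bP$ with the reverse H\"older bound $(\text{R}_{p_1})$ for $\tilde{S}$ under $\bQ$, using H\"older's inequality to split the conditional expectation and Bayes' rule (plus a dual reverse H\"older estimate for $L^{-1}$ under $\bQ$, which follows from $(\text{R}_{p_0})$) to convert the $\tilde{S}$-factor from $\bP$-expectation to $\bQ$-expectation. With exponents chosen carefully slightly greater than one, this yields $(\text{R}_p)$ for $S$ under $\bP$ for some $p > 1$ whose constant depends only on the permitted parameters. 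The claimed well-posedness of BSDE($\kA$) in $\sque$ for all $q$ above $q^*$ (the conjugate of $p$), with $\opnorm{S_{\kA}}_q \leq C_q$ as required, then follows directly from Theorem \ref{thm:main}.

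The main technical obstacle is the last step: tracking the various conjugate exponents when translating from the $\bQ$-reverse-H\"older bound on $\tilde{S}$ and the $\bP$-reverse-H\"older bound on $L$ into a single $\bP$-reverse-H\"older bound on $S = L\tilde{S}$. The individual ingredients (H\"older, Bayes, Kazamaki's $\bmo$-preservation, and reverse H\"older for $L$ and $L^{-1}$) are all classical, but one needs to verify that the final exponent can be chosen strictly larger than one and that no constants escape the parameter list $|a|$, $\norm{\kb}_{\bmo}$, $\norm{\lambda}_{\bmo}$.
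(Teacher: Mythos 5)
Your proof is correct in substance, but it takes a different route from the one the paper intends. The paper omits the proof, saying only that ``the same argument goes through'' as in Proposition \ref{pro:lop}; concretely, that means one repeats the computation of $S a$ directly for $\kA = a\kb^T + \mathrm{Diag}(\klambda)$, finding $d(Sa) = (Sa)\,(\kb^T a + \klambda)\,d\kB$ and hence $(Sa)_i = a_i\,\mathcal{E}\big(\int (\kb^T a + \klambda)\,d\kB\big)$, a scalar exponential of a $\bmo$ process with norm controlled by $|a|\norm{\kb}_{\bmo} + \norm{\klambda}_{\bmo}$, and then controls $\norm{S}_{\spee}$ from the SDE for $S$ as before. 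You instead factor out $L = \mathcal{E}(\int \klambda\, d\kB)$ via Girsanov, reduce to the pure left outer-product case for $\tilde{S} = L^{-1}S$ under $\bQ$ (your It\^o computation for $d\tilde S = \tilde S a\kb^T d\tilde\kB$ is correct, using that $L$ is scalar), and then transfer the reverse H\"older bounds back to $\bP$ via conditional H\"older, Bayes, and the reverse H\"older inequalities for $L$ and $L^{-1}$. What your approach buys is that it cleanly sidesteps a real wrinkle in the direct route: with the diagonal term present, the SDE for $S$ reads $dS = (Sa)\kb^T d\kB + S\klambda\, d\kB$, so the martingale integrand involves $S$ itself and the appeal to Lemma 1.4 of \cite{Delbaen-Tang} is no longer immediate. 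The price is the exponent bookkeeping in the measure transfer, which you correctly flag; that step is standard (it is essentially the argument that Girsanov transforms by BMO martingales preserve the reverse H\"older class) and all constants stay within the list $|a|$, $\norm{\kb}_{\bmo}$, $\norm{\klambda}_{\bmo}$. One small item to make explicit: Theorem \ref{thm:main} requires not only $\rp$ but that $S$ be a true $\bP$-martingale; in your setup this is immediate from Bayes' rule, since $\bE_t[S_T] = \bE_t[L_T \tilde S_T] = L_t\,\bE^{\bQ}_t[\tilde S_T] = L_t \tilde S_t = S_t$ once $\tilde S$ is known to be a true $\bQ$-martingale, but it should be stated.
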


The following proposition deals with sliceability, and is a slight extension of Theorem 2.9 of \cite{jackson2021existence}. It states, among other things, that our results about equation \eqref{lin-bsde-1} extend to the more general BSDE 
\begin{align} \label{lin-bsde-y}
    Y_{\cdot} = \xi + \int_{\cdot}^T \big(\alpha Y + \kA \kZ + \beta \big) dt - \int_{\cdot}^T \kZ d\kB. 
\end{align}
\begin{proposition} \label{pro:sliceable}
If BSDE($\kA$) is strongly well-posed in $\sinf$ and $\Delta \kA$ is sliceable, then BSDE($\kA + \Delta \kA$) is strongly well-posed in $\sinf$. Moreover, if $\Delta \kA \in \lii$, and $\alpha \in \lii$, then for each $(\xi, \beta) \in \linf \times \bmoh$, there exists a unique solution to the equation \eqref{lin-bsde-y} satisfying
\begin{align*}
    \norm{Y}_{\sinf} + \norm{\kZ}_{\bmo} \leqc \norm{\xi}_{\linf} + \norm{\beta}_{\bmoh}, \, \, \const{\opnorm{S_{\kA}}_{\infty,s}, \norm{\alpha}_{\lii}, \norm{\Delta \kA}_{\lii}}.
\end{align*}
If $1 < q < \infty$, BSDE($\kA$) is well-posed in $\sque$ and $\Delta \kA$ is sliceable, then BSDE($\kA + \Delta \kA$) is well-posed in $\sque$. Moreover, if $\Delta \kA \in \lii$, and $\alpha \in \lii$, then for each $(\xi, \beta) \in \lque \times \loq$, there exists a unique solution to the equation \eqref{lin-bsde-y} satisfying
\begin{align*}
   \norm{Y}_{\sque} + \norm{\kZ}_{\ltq} \leqc \norm{\xi}_{\linf} + \norm{\beta}_{\loq}, \, \, \const{\opnorm{S_{\kA}}_{q}, \norm{\alpha}_{\lii}, \norm{\Delta \kA}_{\lii}}.
\end{align*}
\end{proposition}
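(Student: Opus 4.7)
The plan is to adapt the proof of Theorem 2.9 of \cite{jackson2021existence}, viewing the (strong) well-posedness of BSDE$(\kA)$ as a black box and treating the additional terms $\alpha Y$ and $\Delta \kA \kZ$ as perturbations to be absorbed via a Banach fixed-point argument on a partition of $[0,T]$ supplied by sliceability. Concretely, for the $\sinf$ statement, fix $\eta > 0$ to be chosen later and use sliceability of $\Delta \kA$ to produce stopping times $0 = \tau_0 \leq \tau_1 \leq \cdots \leq \tau_m = T$ with $\norm{\Delta \kA \ind{[\tau_{k-1}, \tau_k]}}_{\bmo} < \eta$. On each subinterval $[\tau_{k-1}, \tau_k]$, define $\Phi_k : (\tilde Y, \tilde \kZ) \mapsto (Y, \kZ)$, where $(Y,\kZ)$ solves BSDE$(\kA)$ on $[\tau_{k-1}, \tau_k]$ with terminal $Y_{\tau_k}$ (produced inductively from the later piece) and source $\Delta \kA \tilde \kZ + \beta$. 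The source lies in $\bmoh$ by Cauchy--Schwarz, with $\norm{\Delta \kA \tilde \kZ}_{\bmoh} \leq \eta\,\norm{\tilde \kZ}_{\bmo}$, so strong well-posedness of BSDE$(\kA)$ (on the subinterval by stopping $\kA$ and the data) shows that $\Phi_k$ is Lipschitz with constant bounded by $\opnorm{S_{\kA}}_{\infty,s}\cdot \eta$. Choosing $\eta$ small depending on $\opnorm{S_\kA}_{\infty,s}$ makes $\Phi_k$ a contraction, giving a unique solution on each piece; concatenating backward from $k = m$ to $k = 1$ produces a global solution, and the a priori estimate follows by iterating the per-piece estimate (losing only a $C^m$ factor).

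For the ``moreover'' clause, include $\alpha \tilde Y$ into the source of $\Phi_k$ as well. Since $\alpha \in \lii$ and $\tilde Y \in \sinf$, the term $\alpha \tilde Y$ is bounded and hence in $\bmoh$. To keep its contribution to the Lipschitz constant small, refine the partition above by intersecting with a deterministic mesh so that $\tau_k - \tau_{k-1} \leq \delta$ a.s.; then
\[
\norm{\alpha(\tilde Y^1 - \tilde Y^2)}_{\bmoh} \leq \delta\,\norm{\alpha}_{\lii}\,\norm{\tilde Y^1 - \tilde Y^2}_{\sinf},
\]
and choosing $\delta$ small completes the contraction. Uniqueness in $\sinf$ follows from the local uniqueness of the fixed point together with linearity. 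The $\sque$ statement is proved by the exact same scheme, now using well-posedness of BSDE$(\kA)$ in $\sque$ and taking the source term in $\loq$ rather than $\bmoh$; the requisite bound $\norm{\Delta \kA\,\tilde \kZ}_{\loq} \leq C(\norm{\Delta \kA}_{\bmo})\,\norm{\tilde \kZ}_{\ltq}$ follows from Cauchy--Schwarz together with the good integrability of the quadratic variation of a $\bmo$ martingale (via John--Nirenberg), with the constant tending to zero as $\eta \to 0$ after localizing to the subinterval.

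I expect the main obstacle to be the two technical points that make the per-piece argument go through: first, ensuring that (strong) well-posedness of BSDE$(\kA)$ on $[0,T]$ really does transfer to each subinterval $[\tau_{k-1}, \tau_k]$ with an operator norm controlled by $\opnorm{S_\kA}_{\infty,s}$, which can be handled cleanly by stopping $\kA$ and the data at $\tau_k$ so that the subinterval problem becomes a full-time-horizon problem whose solution is stationary on $[\tau_k, T]$; and second, in the $\sque$ case, tracking how $\norm{\Delta \kA\,\tilde \kZ}_{\loq}$ scales with $\norm{\Delta \kA \ind{[\tau_{k-1},\tau_k]}}_{\bmo}$, so that the Lipschitz constant can in fact be driven below $1/2$ by choosing $\eta$ small. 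Once these two points are in place, the rest of the argument is a routine iteration, as in the $\sinf$ proof of \cite{jackson2021existence}.
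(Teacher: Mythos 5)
The paper does not actually prove this proposition: it is stated as ``a slight extension of Theorem 2.9 of \cite{jackson2021existence}'' and, per the preamble of Section \ref{sec:structural}, is given without proof. Your proposal reconstructs what is essentially the intended argument --- backward induction over the sliceability partition, a contraction on each piece obtained by feeding $\Delta\kA\tilde\kZ$ (and $\alpha\tilde Y$) back in as a source, and the conditional Cauchy--Schwarz bound $\norm{\Delta\kA\tilde\kZ}_{\bmoh}\leq\eta\norm{\tilde\kZ}_{\bmo}$, which is precisely the reason the paper introduces the $\bmoh$ source space and ``strong'' well-posedness in the first place. Your identification of the two technical pressure points is also correct, and your fix for the first (stopping $\kA$ and the data so the subinterval problem becomes a full-horizon one, with the operator norm of the stopped matrix controlled via the reverse H\"older constants of $S$, e.g.\ through Theorem \ref{thm:main}) is the right one.

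One caution on the second pressure point: in the $\sque$ case, the bound $\norm{\Delta\kA\,\tilde\kZ}_{\loq}\leq C\,\norm{\Delta\kA\ind{[\tau_{k-1},\tau_k]}}_{\bmo}\norm{\tilde\kZ}_{\ltq}$ does not follow from pathwise Cauchy--Schwarz followed by H\"older and John--Nirenberg as you sketch it --- that route puts $\tilde\kZ$ in $L^{2,qr'}$ for some $r'>1$, which you do not control. The correct tool is the Fefferman-type inequality for $\bmo$ processes (as in Section 1 of \cite{Delbaen-Tang}), which gives the estimate directly with constant proportional to the $\bmo$ norm of $\Delta\kA$ on the slice; with that substitution the contraction closes. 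Finally, for the quantitative ``moreover'' clauses note that the dependence of the constant on $\norm{\Delta\kA}_{\lii}$ (rather than on the sliceability data) comes from the fact that a bounded $\Delta\kA$ admits a deterministic partition with $\norm{\Delta\kA\ind{[s,t]}}_{\bmo}\leq\norm{\Delta\kA}_{\lii}\sqrt{t-s}$, so the number of pieces, and hence the iterated constant $C^m$, is controlled by $\norm{\Delta\kA}_{\lii}$, $\norm{\alpha}_{\lii}$, and the solution operator norm; this is implicit in your deterministic-mesh refinement but worth stating, since it is what makes the displayed estimates legitimate.
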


We now list some interesting conclusions which can be drawn by combining Theorem \ref{thm:main} with the results of this section. 

\begin{corollary}
Let $\kA_1, \kA_2 \in \bmo((\R^d)^{n \times n})$, and suppose that $\kA_1 - \kA_2$ is sliceable. Denote by $S_1$, and $S_2$ the stochastic exponentials corresponding to $\kA_1$, $\kA_2$, respectively. Then for any $1 < p < \infty$, $S_1$ is a true martingale satisfying $\rp$ if and only if $S_2$ is.
\end{corollary}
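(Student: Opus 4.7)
The plan is to combine Theorem \ref{thm:main} with Proposition \ref{pro:sliceable}, which together provide the precise bridge required: Theorem \ref{thm:main} identifies the reverse H\"older condition on $S_i$ with well-posedness of BSDE($\kA_i$) in $\sque$, while Proposition \ref{pro:sliceable} says that well-posedness in $\sque$ is preserved under sliceable perturbations of the coefficient matrix.

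Concretely, I would first fix $p \in (1,\infty)$, let $q$ be its conjugate, and assume that $S_1$ is a true martingale satisfying $\rp$. Applying Theorem \ref{thm:main} (the implication $(1) \Implies (2)$), BSDE($\kA_1$) is well-posed in $\sque$. Next, write $\kA_2 = \kA_1 + \Delta\kA$ with $\Delta \kA := \kA_2 - \kA_1$; since $\kA_1 - \kA_2$ is sliceable and sliceability is manifestly invariant under sign change (the same sequence of stopping times witnesses the property for $\pm\Delta\kA$), $\Delta\kA$ is sliceable. Then Proposition \ref{pro:sliceable} yields that BSDE($\kA_1 + \Delta \kA$) = BSDE($\kA_2$) is well-posed in $\sque$. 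Applying Theorem \ref{thm:main} once more, this time in the direction $(2) \Implies (1)$, we conclude that $S_2$ is a true martingale satisfying $\rp$.

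The reverse implication (that the reverse H\"older property of $S_2$ implies that of $S_1$) follows by symmetry: reverse the roles of $\kA_1$ and $\kA_2$ in the argument above, using that $\kA_2 - \kA_1$ is sliceable iff $\kA_1 - \kA_2$ is.

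Since the heavy lifting has already been done in Theorem \ref{thm:main} and Proposition \ref{pro:sliceable}, there is no substantial obstacle here; the only point deserving care is matching the precise well-posedness statements in $\sque$ (rather than strong well-posedness in $\sinf$) between the two results, which is exactly the $1 < q < \infty$ clause of Proposition \ref{pro:sliceable}. Consequently the proof is essentially a one-line chain of implications using the two main theorems.
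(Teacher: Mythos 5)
Your proof is correct and is exactly the argument the paper intends: the corollary is stated as a direct consequence of combining Theorem \ref{thm:main} with Proposition \ref{pro:sliceable}, and your chain (reverse H\"older for $S_1$ $\Rightarrow$ well-posedness of BSDE($\kA_1$) in $\sque$ $\Rightarrow$ well-posedness of BSDE($\kA_2$) in $\sque$ under the sliceable perturbation $\Delta\kA = \kA_2-\kA_1$ $\Rightarrow$ reverse H\"older for $S_2$, plus symmetry) is precisely that. Your attention to the fact that $1<p<\infty$ forces $1<q<\infty$, so the relevant clause of Proposition \ref{pro:sliceable} applies, is the only point of care and you handled it correctly.
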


\begin{corollary}
 If $\kA$ is lower triangular, or has left or right outer-product structure, then $S$ is a true martingale which satisfies $\rp$ for some $p > 1$.
\end{corollary}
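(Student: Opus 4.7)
The plan is to observe that this corollary is essentially a direct combination of the structural well-posedness results already proved in Section \ref{sec:structural} with the equivalence in Theorem \ref{thm:main}. I would handle the three structural cases in parallel, since they all reduce to the same two-line argument.

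First, I would note that under each of the three structural hypotheses we already have well-posedness of BSDE$(\kA)$ in $\sque$ for some $q \in (1, \infty)$. Specifically, if $\kA$ is lower triangular, Proposition \ref{pro:triangular} gives well-posedness in $\sque$ for every $q > q^*(\norm{\kA}_{\bmo})$; if $\kA = \ka b^T$ has right outer-product structure, Proposition \ref{pro:rop} gives well-posedness in $\sque$ for every $q > q^*(\norm{\ka}_{\bmo}, |b|)$; and if $\kA = a \kb^T$ has left outer-product structure, Proposition \ref{pro:lop} gives well-posedness in $\sque$ for every $q > q^*(|a|, \norm{\kb}_{\bmo})$. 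In particular, in each case we may fix some finite $q \in (q^*, \infty)$ and obtain well-posedness of BSDE$(\kA)$ in $\sque$.

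Second, I would apply Theorem \ref{thm:main} to the chosen $q$: the implication (2) $\Rightarrow$ (1) of that theorem tells us that $S$ is a true martingale which satisfies $\rp$, where $p \in (1, \infty)$ is the conjugate exponent of $q$. Since $q$ was chosen finite, $p > 1$, which is exactly the conclusion of the corollary.

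There is no real obstacle here: all the work has already been done in the preceding propositions and in Theorem \ref{thm:main}. The only minor subtlety is that in the left outer-product case the proof of Proposition \ref{pro:lop} actually already exhibits $\rp$ directly (via the one-dimensional reverse Hölder applied to $\mathcal{E}(\int (\kb^T a)\, d\kB)$ and Lemma 1.4 of \cite{Delbaen-Tang}), so one could alternatively quote that proof directly and bypass the round-trip through Theorem \ref{thm:main}; but routing all three cases through Theorem \ref{thm:main} gives a cleaner unified statement.
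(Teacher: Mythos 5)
Your proposal is correct and is essentially the paper's intended argument: the paper states this corollary without proof, introducing it as a consequence of ``combining Theorem \ref{thm:main} with the results of this section,'' which is exactly the two-step route you take (structural well-posedness in $\sque$ for some finite $q>q^*$ via Propositions \ref{pro:triangular}, \ref{pro:rop}, \ref{pro:lop}, then the equivalence of Theorem \ref{thm:main} to get $\rp$ with $p>1$). Your side remark that Proposition \ref{pro:lop} already establishes $\rp$ directly for the left outer-product case is also accurate and consistent with the paper's own commentary on which direction of Theorem \ref{thm:main} is easier in each case.
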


\section{Application to quadratic BSDE systems} \label{sec:qbsde}

\begin{definition}\label{def:driver} A \define{driver} is a random field $f : [0,T]
  \times \Omega \times \R^n \times (\R^d)^n \to \R^n$ such that
  \begin{enumerate}
    \item $f(\cdot,\cdot,y,\kz)$ is progressively measurable process for all
    $y,\kz$.
    \item $f(\cdot,\omega,\cdot,\cdot)$ is a continuous function for each
    $\omega$.
  \end{enumerate}
  \end{definition}
In this section, we consider the BSDE 
\begin{align} \label{bsde}
    Y_{\cdot} = \xi + \int_{\cdot}^T f(\cdot, Y,\kZ) dt - \int_{\cdot}^T \kZ d\kB
\end{align}
where $\xi \in \linf(\R^n)$ and $f$ is a driver satisfying certain structural conditions. If $f$ is $C^1$ (i.e., $(y,\kz) \mapsto f(t, \omega, y, \kz)$ is $C^1$ for each $(t, \omega) \in [0,T] \times \Omega$), then we write $\frac{\partial f}{ \partial y}$ and $\frac{\partial f}{\partial z}$ for the derivatives with respect to $y$ and $z$, respectively. We view $\pd{f}{y}$ and $\pd{f}{z}$ as maps $[0,T] \times \Omega \times \R^n \times (\R^d)^n \to \R^{n \times n}$ and $
 [0,T] \times \Omega \times \R^n \times (\R^d)^n \to (\R^d)^{n \times n}$, respectively. We will use the following definitions when discussing the drivers of interest. 

\begin{definition}
A driver $f$ is said to
\begin{itemize}
    \item be \define{Lipschitz} if there is a constant $L$ such that for all $y, y' \in \R^n$, $z,z' \in (\R^d)^n$, and $(t,\omega) \in [0,T] \times \Omega$, we have
    \begin{align*}
        |f(t,\omega, y, \kz) - f(t,\omega, y',\kz')| \leq L \big(|y - y'| + |z - z'|\big), 
    \end{align*}
    and $\norm{f(\cdot, \cdot, 0,0)}_{\lii} \leq L$. In this case we write $f \in \lip(L)$
    \item
    be \define{Malliavin-regular} if there exists a constant $L>0$ and a random field $D_{\cdot}f: [0,T]^2 \times
  \Omega \times \R^d \times (\R^d)^n \to \R$ such that
    \begin{enumerate}
      \item for all $(y,\kz)$, $D_{\cdot}f(\cdot,y,\kz)$ is a version of the
      Malliavin derivative of the process $f(\cdot, y, \kz)$,
      \item $\abs{Df}\leq L$, and
      \item $\abs{ D_{\cdot}f(\cdot, y', \kz') - D_{\cdot}f(\cdot, y, \kz)} 
      \leq  L \Big( \abs{y'-y} + \abs{\kz' - \kz} \Big)$.
    \end{enumerate}
  In this case, we write $f \in \mal(L)$.
  \item \define{satisfy the condition (AB)} if there is a
  process $\rho \in \loi$ and a finite collection $\sam = (a_1,\dots, a_M)$ of
  vectors in $\R^n$ such that
  \begin{enumerate}
    \item $a_1,\dots, a_M$ positively span $\R^n$ 
    \item $ a_m^T f(t,\omega,y, \kz) \leq \rho + \tfrac{1}{2} \abs{a_m^T \kz}^2$
    for each $m$, for all $y,\kz$.
  \end{enumerate}
  In this case, we say that $f \in \apb(\rho, \sam)$, with
  $\apb=\cup_{\rho,\sam} \apb(\rho,\sam)$.
\end{itemize}
\end{definition}

\begin{remark}
We remark that the condition (AB) does not appear as a hypothesis in Theorem \ref{thm:qlmain} regarding quadratic linear drivers, but it is a hypothesis in Theorem \ref{thm:udmain} which concerns unidirecitonal quadratic drivers.
\end{remark}

We will also need the notion of a Lyapunov function, as introduced in \cite{xing2018}.

\begin{definition} \label{def:lyapunov}
Let $f$ be a driver, and $c$ a constant. A non-negative function $h \in C^2(\R^n)$ is a \textbf{c-Lyapunov function} for $f$ if $h(0) = 0$, $Dh(0) = 0$, and 
\begin{align*}
    \frac{1}{2} \sum_{i,j = 1}^n (D^2h(y))_{ij} \kz^i \cdot \kz^j - Dh(y) \cdot f(t, \omega, y, z)  \geq |z|^2 - k
\end{align*}
for all $(t, \omega, y, \kz) \in [0,T] \times \Omega \times \R^n \times (\R^d)^n$ with $|y| \leq c$. In this case, we say that $(h,k) \in \textbf{Ly}(f,c)$.
\end{definition}

The utility of Lyapunov functions is demonstrated by the following Lemma.

\begin{lemma} \label{lem:zfromy}
Suppose that for some constant $c$ and driver $f$, there is a Lyapunov pair $(h,k) \in \mathbf{Ly}(f, c)$. Suppose further that there is a solution $(Y,\kZ) \in \sinf \times \bmo$ to \eqref{bsde} such that $\norm{Y}_{\sinf} \leq c$. Then we have the estimate 
\begin{align*}
    \norm{\kZ}_{\bmo}^2 \leq kT + 2 \sup_{|y| \leq c} |h(y)|. 
\end{align*}
\end{lemma}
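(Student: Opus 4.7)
The plan is to apply It\^o's formula to $h(Y)$ and extract the BMO estimate on $\kZ$ from the Lyapunov inequality. Since $h \in C^2$ and $\norm{Y}_{\sinf} \leq c$, the process $h(Y)$ stays in a compact set where $h$, $Dh$, and $D^2h$ are bounded, so there are no integrability issues.

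First, apply It\^o's formula to $h(Y_t)$, using $dY_s = -f(s,Y_s,\kZ_s)\,ds + \kZ_s\,d\kB_s$ and the convention $d\langle Y^i, Y^j\rangle_s = \kZ^i_s \cdot \kZ^j_s\,ds$. Rearranging for any stopping time $\tau$ with $0 \leq \tau \leq T$ gives
\begin{align*}
    h(Y_\tau) = h(Y_T) + \int_\tau^T \Big[Dh(Y_s) \cdot f(s,Y_s,\kZ_s) - \tfrac{1}{2}\sum_{i,j}(D^2 h(Y_s))_{ij}\, \kZ^i_s \cdot \kZ^j_s\Big]\,ds - \int_\tau^T Dh(Y_s)\cdot \kZ_s\, d\kB_s.
\end{align*}
Since $|Y_s| \leq c$ almost surely, the Lyapunov condition from Definition \ref{def:lyapunov} applies pointwise, yielding the pointwise inequality
\begin{align*}
    Dh(Y_s) \cdot f(s,Y_s,\kZ_s) - \tfrac{1}{2}\sum_{i,j}(D^2 h(Y_s))_{ij}\, \kZ^i_s \cdot \kZ^j_s \leq k - |\kZ_s|^2.
\end{align*}
Substituting and rearranging produces
\begin{align*}
    \int_\tau^T |\kZ_s|^2\,ds \leq h(Y_T) - h(Y_\tau) + k(T-\tau) - \int_\tau^T Dh(Y_s)\cdot \kZ_s\, d\kB_s.
\end{align*}

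Next, take the conditional expectation $\bE_\tau[\cdot]$ of both sides. The key observation is that $Dh(Y)$ is bounded (since $Dh$ is continuous on the compact set $\{|y|\leq c\}$) and $\kZ \in \bmo$, so the integrand $Dh(Y)\kZ$ is in $\bmo$ and the stochastic integral $\int_0^\cdot Dh(Y_s)\cdot \kZ_s\,d\kB_s$ is a BMO, hence uniformly integrable, martingale. The conditional expectation of the martingale increment therefore vanishes, giving
\begin{align*}
    \bE_\tau\Big[\int_\tau^T |\kZ_s|^2\,ds\Big] \leq \bE_\tau\big[h(Y_T) - h(Y_\tau)\big] + k(T-\tau) \leq 2\sup_{|y|\leq c}|h(y)| + kT,
\end{align*}
where in the last step we bounded $|h(Y_T) - h(Y_\tau)| \leq 2\sup_{|y|\leq c}|h(y)|$ using $\norm{Y}_{\sinf}\leq c$. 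Taking the essential supremum over $\omega$ and then the supremum over stopping times $\tau$ yields the claimed bound on $\norm{\kZ}_{\bmo}^2$.

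No step presents a genuine obstacle, but the one place to be careful is justifying that the stochastic integral drops under conditional expectation; this is where the $\bmo$ hypothesis on $\kZ$ (combined with boundedness of $Dh\circ Y$) is essential, since otherwise one would only have a local martingale and would need to localize and pass to the limit via Fatou.
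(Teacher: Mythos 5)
Your proof is correct and follows essentially the same route as the paper: the paper compresses your It\^o computation into the single assertion that $h(Y_\cdot) + \int_0^{\cdot} (k - |\kZ|^2)\,dt$ is a submartingale and then applies optional sampling, which is exactly the inequality you derive by hand. Your explicit justification that the stochastic integral term vanishes under $\bE_\tau$ (via boundedness of $Dh(Y)$ and $\kZ \in \bmo$) is a detail the paper leaves implicit, and it is the right one.
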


\begin{proof}
By the definition of $\mathbf{Ly}(f,c)$, we find that 
\begin{align*}
    h(Y_{\cdot})  + \int_0^{\cdot} k - |\kZ|^2 dt
\end{align*}
is a submartingale, and so for any stopping time $\tau$ we have
\begin{align*}
    \bE_{\tau}[h(Y_T) +  \int_0^T k - |\kZ|^2 dt] \geq h(Y_{\tau})  + \int_0^{\tau} k - |\kZ|^2 dt, 
\end{align*}
and rearranging gives
\begin{align*}
    \bE_{\tau}[\int_{\tau}^T |\kZ|^2 dt] \leq k(T - \tau) + \bE_{\tau}[h(Y_T) - h(Y_{\tau})]. 
\end{align*}
The result follows. 
\end{proof}

The reason for the name of the a-priori boundedness condition is explained by the following lemma. 

\begin{lemma} \label{lem:ab}
Suppose that $f$ is a driver which satisfies the condition (AB), and that $(Y,\kZ) \in \sinf \times \bmo$ solves \eqref{bsde}. Then we have the estimate
\begin{align*}
    \norm{Y}_{\sinf} + \norm{\kZ}_{\bmo} \leq C, \, \, \const{\norm{\xi}_{\linf}, \rho, \{a_m\}}. 
\end{align*}
\end{lemma}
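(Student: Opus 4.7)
The plan is to use condition (AB) directionally: control each scalar process $a_m^T Y$ and $a_m^T \kZ$ using an exponential transformation, and then recover two-sided bounds on $|Y|$ and $|\kZ|$ using the positive spanning property of $\{a_m\}$.

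First I would fix $m$ and set $\Psi^m_t \coloneqq a_m^T Y_t + \int_0^t \rho_s ds$. Since $dY = -f\,dt + \kZ\,d\kB$, one has $d\Psi^m = (\rho - a_m^T f)\,dt + a_m^T \kZ\, d\kB$, and Itô's formula gives, for any $\gamma \geq 1$,
\begin{align*}
d e^{\gamma \Psi^m} = e^{\gamma \Psi^m}\Big[\gamma(\rho - a_m^T f) + \tfrac{\gamma^2}{2}|a_m^T \kZ|^2\Big]dt + \gamma e^{\gamma \Psi^m}\, a_m^T \kZ \, d\kB.
\end{align*}
Condition (AB) gives $\rho - a_m^T f \geq -\tfrac{1}{2}|a_m^T \kZ|^2$, so the drift is bounded below by $\tfrac{\gamma(\gamma-1)}{2}e^{\gamma \Psi^m}|a_m^T \kZ|^2 \geq 0$, i.e.\ $e^{\gamma \Psi^m}$ is a local submartingale.

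Taking $\gamma = 1$ and using that $Y \in \sinf$ and $\int_0^T |\rho|\, dt \leq \norm{\rho}_{\loi}$ a.s., $\Psi^m$ is bounded and $e^{\Psi^m}$ is of class (D), hence a true submartingale. Optional stopping then yields $e^{\Psi^m_\tau} \leq \bE_\tau[e^{\Psi^m_T}] \leq e^{|a_m|\norm{\xi}_{\linf} + \norm{\rho}_{\loi}}$, which gives the one-sided bound
\begin{align*}
a_m^T Y_\tau \leq |a_m|\norm{\xi}_{\linf} + 2\norm{\rho}_{\loi} \, \efor \text{each } m \eand \text{each stopping time } \tau.
\end{align*}

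Next I would recover the bound on $|Y|$ from the per-direction bounds. The positive spanning hypothesis implies that the closed convex set $\{y \in \R^n : a_m^T y \leq C \,\forall m\}$ has recession cone $\{y : a_m^T y \leq 0\,\forall m\} = \{0\}$ (since $-a_k$ lies in the positive cone of $\{a_m\}$, forcing $a_k^T y = 0$ for all $k$), hence it is bounded, with diameter controlled by $C$ and $\{a_m\}$. This gives $\norm{Y}_{\sinf} \leq C(\norm{\xi}_{\linf}, \norm{\rho}_{\loi}, \{a_m\})$.

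Finally, for $\kZ$ I would exploit the positive drift in the Itô expansion by taking $\gamma = 2$. A localization argument combined with Fatou, together with the fact that $\Psi^m$ is now known to be bounded by a constant depending only on $\norm{\xi}_{\linf}, \norm{\rho}_{\loi}, \{a_m\}$, yields
\begin{align*}
\bE_\tau\Big[\int_\tau^T |a_m^T \kZ_s|^2 ds\Big] \leq C_m
\end{align*}
for each $m$ and each stopping time $\tau$. Since $\{a_m\}$ linearly spans $\R^n$, each standard basis vector $e_i$ is a linear combination of the $a_m$ with coefficients bounded by a constant depending only on $\{a_m\}$, so $|\kZ^i|^2 \lesssim \sum_m |a_m^T \kZ|^2$ pointwise, giving $\norm{\kZ}_{\bmo} \leq C$.

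I expect the most delicate step to be the geometric passage from the one-sided scalar bounds $a_m^T Y \leq C$ to a two-sided bound on $|Y|$: this is where the positive spanning hypothesis is essential (and not merely linear spanning), and the constant depends on the specific configuration $\{a_m\}$ through the geometry of the polar cone. Everything else is a routine Itô/localization exercise given the exponential transformation $\Psi^m$.
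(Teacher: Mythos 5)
Your proof is correct, and it is essentially the standard argument that the paper invokes by reference (the proof is omitted in the paper, which points to Proposition 3.8 of \cite{jackson2021existence}): an exponential transform of $a_m^T Y + \int_0^\cdot \rho\,dt$ in each direction $a_m$, a submartingale/optional-stopping step using (AB), and the positive-spanning property to convert the one-sided directional bounds into bounds on $\norm{Y}_{\sinf}$ and $\norm{\kZ}_{\bmo}$. No gaps.
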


\begin{proof} 
The proof is almost identical to that of Proposition 3.8 of \cite{jackson2021existence}, and so is omitted.  
\end{proof}

Finally, we define the space of bounded random variables with bounded Malliavin derivatives as follows. 

\begin{definition}
If $\xi$ is Malliavin differentiable, we denote by $D \xi$ its Malliavin derivative. We denote by $\doi$ the space of all $\xi \in \linf$ such that $\xi$ is Malliavin differentiable and $D\xi \in \lii$. 
\end{definition}

\subsection{BSDEs with quadratic-linear drivers} \label{sec:ql}

\begin{definition}
  A driver $f$ is called  \define{quadratic-linear} if there is a constant $L$, a driver $g \in \lip(L) \cap \mal(L)$, and a vector $b \in \R^n$ with $|b| \leq L$ such that 
    \begin{align*}
        f(t, \omega, y ,\kz) = g(t,\omega, y, \kz) + \kz b^T \kz. 
    \end{align*}
    In this case, we write $f \in \ql(L)$, and set $\ql = \cup_L \ql(L)$. 
\end{definition}

Let us explain precisely what is meant by $\kz b^T \kz$. Here $\kz \in (\R^d)^n$, so $b^T \kz \in \R^d$ is given by 
\begin{align*}
    b^T \kz = \sum_{i = 1}^n b_i \kz^i, 
\end{align*}
and thus $\kz b^T \kz \in \R^n$ is given by 
\begin{align*}
    (\kz b^T \kz)_i = \kz^i \cdot (b^T \kz) = \kz^i \cdot \big( \sum_{j = 1}^n b_j \kz^j \big) = \sum_{j = 1}^n b^j \kz^i \cdot \kz^j. 
\end{align*}
Thus the quadratic term in a quadratic-linear driver is given by a certain weighted sum of inner products of the components of $\kz \in (\R^d)^n$.

Here is the main result regarding quadratic-linear drivers. 

\begin{theorem} \label{thm:qlmain}
Suppose that $\xi \in \linf$ and $f \in \ql(L)$. Then, there exists a unique solution $(Y,\kZ) \in \sinf \times \bmo$ to \eqref{bsde}, which satisfies the estimates 
\begin{align*}
    \norm{Y}_{\sinf} + \norm{\kZ}_{\bmo} \leq C, \text{ where } \const{L, \norm{\xi}_{\linf}}.
\end{align*}
If in addition $\xi \in \doi$, then we have 
\begin{align*}
    \norm{\kZ}_{\lii} \leq C, \text{ where } \const{L, \norm{\xi}_{\linf}, \norm{D\xi}_{\lii}}.
\end{align*}
\end{theorem}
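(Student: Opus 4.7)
The plan is to combine the Malliavin-calculus approach of \cite{Briand-Elie}, as extended to systems in \cite{harter2019} and \cite{jackson2021existence}, with the linear BSDE machinery developed above, proceeding via a priori estimates, uniqueness, existence for $\xi \in \doi$, and approximation.

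For the a priori estimate, the key observation is that projecting the equation onto $b$ produces a scalar quadratic BSDE: setting $U \coloneqq b\cdot Y$ and $\kV \coloneqq b^T \kZ$, a direct computation yields
\begin{align*}
U = b\cdot \xi + \int_\cdot^T \bigl(b\cdot g(t,Y,\kZ) + |\kV|^2\bigr) dt - \int_\cdot^T \kV\, d\kB.
\end{align*}
Scalar Kobylanski-type estimates, combined with the Lyapunov function for $\ql$ drivers (as in \cite{xing2018}) together with Lemma \ref{lem:zfromy}, give $\norm{\kV}_{\bmo} \leqc 1$ in terms of $L$ and $\norm{\xi}_{\linf}$. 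Then for any unit vector $e \in \R^n$, $e\cdot Y$ satisfies a scalar BSDE linear in $e^T \kZ$ with $\bmo$ coefficient $\kV$ plus a bounded Lipschitz perturbation from $g$, and applying Theorem \ref{thm:1dwellposed} component-by-component closes the bound on $\norm{Y}_{\sinf} + \norm{\kZ}_{\bmo}$.

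Uniqueness follows from the linearized difference equation. For two solutions $(Y^i,\kZ^i)$, the identity
\begin{align*}
\kZ^1 b^T \kZ^1 - \kZ^2 b^T \kZ^2 = \bigl(\text{Diag}(b^T \bar\kZ) + \bar\kZ b^T\bigr)(\kZ^1 - \kZ^2), \qquad \bar\kZ \coloneqq \tfrac12(\kZ^1 + \kZ^2),
\end{align*}
shows that $(\delta Y, \delta\kZ)$ solves a BSDE of the form \eqref{lin-bsde-y} with coefficient $\kA = \tfrac{\partial g}{\partial \kz}(\theta) + \text{Diag}(b^T \bar\kZ) + \bar\kZ b^T$ and bounded $y$-dependent term. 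The last two contributions to $\kA$ have exactly the $\ka b^T + \text{Diag}(\klambda)$ structure covered by Proposition \ref{pro:outerdiag}, and $\tfrac{\partial g}{\partial \kz} \in \lii$ is sliceable, so Proposition \ref{pro:sliceable} gives well-posedness in $\sinf$, forcing $\delta Y \equiv 0$.

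For existence with $\xi \in \doi$, I would apply a smooth cutoff $\chi_N$ to obtain a Lipschitz driver $f_N = g + \chi_N(\kz)\, \kz b^T \kz$; the classical theory produces unique solutions $(Y^N,\kZ^N) \in \sinf \times \bmo$, Malliavin differentiable via $g \in \mal$, with $\kZ^N_t = D_tY^N_t$ a.e.\ The a priori estimate of the first step is uniform in $N$. Differentiating the truncated BSDE, $(D_sY^N, D_s\kZ^N)$ solves a linear BSDE whose $\kZ$-coefficient is exactly of the form $(\text{bounded}) + \chi_N(\kZ^N)\bigl(\kZ^N b^T + \text{Diag}(b^T \kZ^N)\bigr)$, with terminal datum $D_s\xi \in \lii$ and $\lii$-forcing from $D_sg$. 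Propositions \ref{pro:outerdiag} and \ref{pro:sliceable} yield strong well-posedness in $\sinf$ with constants depending only on $L$ and the uniform $\norm{\kZ^N}_{\bmo}$, so $\norm{\kZ^N}_{\lii} = \sup_s \norm{D_sY^N}_{\sinf} \leq C$ uniformly in $N$; for $N$ sufficiently large the cutoff is inactive and $(Y^N,\kZ^N)$ solves \eqref{bsde}. For general $\xi \in \linf$ I would approximate by $\xi^n \in \doi$ and pass to the limit via the same structural stability argument used in the uniqueness step. The main obstacle will be the uniform-in-$N$ Malliavin bound in the third step: the whole argument hinges on the $N$-independent $\bmo$ control of $\kZ^N$ coming from the projection onto $b$ in the first step, together with the fact that the $\kz$-derivative of the non-linearity $\kz b^T \kz$ retains the outer-product-plus-diagonal structure handled by Proposition \ref{pro:outerdiag}.
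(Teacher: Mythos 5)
Your overall architecture is the same as the paper's: a priori $\sinf\times\bmo$ bounds, truncation plus Malliavin differentiation, well-posedness of the resulting linear BSDE via the outer-product-plus-diagonal structure (Propositions \ref{pro:outerdiag}, \ref{pro:linfexists}, \ref{pro:sliceable}), a linearized stability estimate for uniqueness (your symmetrized identity with $\bar\kZ$ is algebraically equivalent to the paper's $\kA = \kZ^2 b^T + \text{Diag}(b^T\kZ^1)$), and approximation of general $\xi\in\linf$ by $\doi$ data. However, two steps have concrete gaps. First, the a priori estimate is circular as written. The scalar equations for $b\cdot Y$ and $e\cdot Y$ are not self-contained: their inhomogeneities $b\cdot g(\cdot,Y,\kZ)$ and $e\cdot g(\cdot,Y,\kZ)$ are only bounded by $L(1+|Y|+|\kZ|)$, so both the Kobylanski-type bound on $b\cdot Y$ and the ``component-by-component'' application of Theorem \ref{thm:1dwellposed} produce constants depending on $\norm{Y}_{\sinf}+\norm{\kZ}_{\bmo}$, which is exactly what you are trying to bound; moreover Lemma \ref{lem:zfromy} cannot be invoked before an $\sinf$ bound on the full vector $Y$ is in hand. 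The paper breaks this circle differently (Proposition \ref{pro:apriori}): a Girsanov change of measure with density $\mathcal{E}(\int\psi(\kZ)\,d\kB)_T$, $\psi(\kz)=b^T\kz$, removes the quadratic term entirely, a standard Lipschitz estimate under the new measure bounds $\norm{Y}_{\sinf}$, and only then does the Lyapunov function of \cite{xing2018} plus Lemma \ref{lem:zfromy} give the $\bmo$ bound on $\kZ$.

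Second, the multiplicative cutoff $f_N = g + \chi_N(\kz)\,\kz b^T\kz$ does not deliver the coefficient structure you claim. Differentiating in $\kz$ produces, in addition to $\chi_N(\kZ^N)\bigl(\kZ^N b^T + \text{Diag}(b^T\kZ^N)\bigr)$, the term $(\kZ^N b^T\kZ^N)\,\nabla\chi_N(\kZ^N)^T$, whose $(i,j)$ entry is $\bigl(\kZ^{N,i}\cdot(b^T\kZ^N)\bigr)\,\partial_{\kz^j}\chi_N(\kZ^N)$. This is an outer product of two \emph{processes} and fits neither the left nor the right outer-product definitions (both require one constant factor in $\R^n$), nor is it in $\lii$ uniformly in $N$ (it is only bounded by $O(N)$ on the annulus where $\nabla\chi_N\neq 0$), so Proposition \ref{pro:sliceable} gives constants that blow up with $N$ and the conclusion ``for $N$ large the cutoff is inactive'' cannot be drawn. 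The paper's truncation $\kz\,\phi^k(b^T\kz)$, with the cutoff applied to $b^T\kz$ \emph{before} the outer multiplication, is chosen precisely so that the derivative is exactly $\ka_k b^T + \text{Diag}(\phi^k(b^T\kZ^{(k)}))$ with $\norm{\ka_k}_{\bmo}\leq\norm{\kZ^{(k)}}_{\bmo}$, uniformly in $k$, and so that the truncated drivers remain of the form $g+\kz\psi(\kz)$ with $\psi$ uniformly Lipschitz, keeping the a priori estimates uniform. With these two repairs your argument coincides with the paper's proof.
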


\begin{remark}
It is possible via an approximation argument to remove the smoothness hypotheses on the Lipschitz driver $g$. It is also possible, if we assume that $f$ satisfies the condition (AB), to assume that that $g$ is merely sub-quadratic rather than Lipschitz. In order to avoid additional technicalities we will not pursue these generalizations. 
\end{remark}

\subsubsection{A-priori estimates for quadratic-linear drivers}

We start by producing $\sinf \times \bmo$ a-priori estimates for equations with quadratic-linear drivers. In fact, we need estimates for a slightly more general class of drivers.

One of the key results of \cite{xing2018} is the existence of Lyapunov functions when the driver satisfies the so-called Bensoussan-Frehse condition. We only need a very special case of their result.
The proof of Proposition 2.11 in \cite{xing2018} does not rely on the Markovian property of $f$, and thus an identical argument shows the following.  

\begin{proposition} \label{prop:lyapunovexist}
Suppose that $f$ can be written as 
\begin{align} \label{qldecomp}
    f(t,\omega, y, \kz) = g(t,\omega, y, \kz) + \kz \psi(\kz) 
\end{align}
for some $g \in \lip(L)$, and $\psi: (\R^d)^n \to \R^n$ which is $L$-Lipschitz. Then, for any constant $c$, there is a function $h$ and a constant $k$ such that $(h,k) \in \mathbf{Ly}(f, c)$. Moreover, $(h,k)$ can be chosen so that that $(h,k) \in \mathbf{Ly}(f',c)$ whenever $f'$ has a decomposition of the form \eqref{qldecomp}. 
\end{proposition}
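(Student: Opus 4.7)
The plan is to construct an explicit Lyapunov function by a product-cosh ansatz, essentially reproducing the construction in \cite{xing2018} and then observing that nothing in it uses Markovianity. Specifically, I would try
\begin{align*}
h(y) = \prod_{i=1}^n \cosh(\alpha y_i)^{\lambda} - 1
\end{align*}
with parameters $\alpha,\lambda > 0$ to be chosen at the end in terms of $L$, $c$, and $n$. Trivially $h \geq 0$ and $h(0)=0$; writing $T_i := \tanh(\alpha y_i)$, direct computation gives $D_i h = (1+h)\lambda \alpha T_i$ (so $Dh(0)=0$) and $D^2_{ij} h = (1+h)\lambda\alpha^2\bigl[\lambda T_i T_j + \delta_{ij}(1-T_i^2)\bigr]$.

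The key step is a perfect-square cancellation. Setting $w := \sum_i T_i\kz^i \in \R^d$, the above expressions yield
\begin{align*}
\tfrac{1}{2}\sum_{i,j}D^2_{ij}h\, \kz^i\cdot\kz^j = \tfrac{(1+h)\alpha^2\lambda}{2}\Bigl[\sum_i (1-T_i^2)|\kz^i|^2 + \lambda |w|^2\Bigr],
\end{align*}
while $Dh\cdot(\kz\psi(\kz)) = (1+h)\alpha\lambda\, w\cdot \psi(\kz)$ is bounded by $\tfrac{(1+h)\alpha^2\lambda^2}{2}|w|^2 + \tfrac{(1+h)|\psi(\kz)|^2}{2}$ via Young's inequality, so the $|w|^2$ pieces cancel exactly. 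Using $|y|\leq c$ to bound $1-T_i^2 \geq 1-\tanh^2(\alpha c)$ and the Lipschitz bound $|\psi(\kz)|\leq L(1+|\kz|)$, what remains is a coefficient $(1+h)\alpha^2\lambda (1-\tanh^2(\alpha c))/2 - L^2$ in front of $|\kz|^2$, plus a sub-quadratic residual coming from $Dh\cdot g$ (linear in $|\kz|$ since $g$ is Lipschitz and $|y|\leq c$). Choosing $\alpha$ small (for instance $\alpha = 1/c$) and then $\lambda$ large enough in terms of $L$, $c$, $n$ makes the $|\kz|^2$-coefficient exceed $1$, and the remaining linear-in-$|\kz|$ terms are absorbed by Young's inequality into $\tfrac{1}{2} |\kz|^2$ plus a constant $k = k(L,c,n)$.

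I expect the algebraic identification of the cancellation between the $\lambda |w|^2$ term on the left and the $\tfrac{\alpha^2\lambda^2}{2}|w|^2$ term from Young's inequality to be the main obstacle; everything else is bookkeeping. Crucially, the entire argument is a pointwise-in-$(t,\omega,y,\kz)$ algebraic inequality that uses only the uniform Lipschitz constant $L$ of $g$ and $\psi$ together with the range bound $c$. Therefore the resulting pair $(h,k)$ depends only on $L$, $c$, $n$, and in particular works for any other driver $f' = g' + \kz\psi'(\kz)$ whose ingredients share the same Lipschitz constant, which gives the moreover clause. The Markovian hypothesis in \cite{xing2018} is never invoked anywhere above, justifying the claim that their proof transfers verbatim.
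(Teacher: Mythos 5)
Your construction is correct, and it is worth saying up front that the paper does not actually prove this proposition: it simply cites Proposition 2.11 of \cite{xing2018} and remarks that the argument there never uses the Markovian property. So where the paper defers entirely to the reference, you have supplied an explicit, self-contained construction, and the key mechanism --- the exact cancellation of the $\lambda|w|^2$ contribution from the Hessian against the $\tfrac{1}{2}\lambda^2\alpha^2|w|^2$ term produced by Young's inequality, with $w=\sum_i T_i\kz^i$ --- is precisely the right one and checks out. Three small points to tighten. First, for $\kz\psi(\kz)$ to parse as in the rest of the paper (cf.\ the term $\kz b^T\kz$ and the Girsanov step in the a priori estimate), $\psi$ must take values in $\R^d$, not $\R^n$ as the statement literally says; your computation $Dh\cdot(\kz\psi(\kz))=(1+h)\lambda\alpha\,w\cdot\psi(\kz)$ implicitly and correctly adopts the $\R^d$ reading. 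Second, your bound $|\psi(\kz)|\le L(1+|\kz|)$ silently assumes $|\psi(0)|\le L$, which is not part of the hypothesis as written; either add it (it holds in every application in the paper, where $\psi(0)=0$) or let $k$ absorb $|\psi(0)|^2$, at the cost of weakening the uniformity in the ``moreover'' clause. Third, after Young's inequality the residual quadratic term is $\tfrac{1}{2}(1+h)|\psi(\kz)|^2$, so the coefficient in front of $|\kz|^2$ is $(1+h)\bigl[\tfrac{1}{2}\lambda\alpha^2(1-\tanh^2(\alpha c))-L^2\bigr]$ rather than $(1+h)\tfrac{1}{2}\lambda\alpha^2(1-\tanh^2(\alpha c))-L^2$ as you wrote; this is harmless --- indeed it is essential that both quadratic terms share the common factor $1+h\ge 1$, since $1+h$ itself grows like $\cosh(\alpha c)^{n\lambda}$ and could not otherwise be dominated --- but the bracketing should be made explicit. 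With these repairs the pair $(h,k)$ depends only on $L$, $c$, $n$, which gives the ``moreover'' clause exactly as you argue.
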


\begin{proposition} \label{pro:apriori}
Suppose that $f$ can be written as 
\begin{align*}
    f(t,\omega, y, \kz) = g(t,\omega, y, \kz) + \kz \psi(\kz) 
\end{align*}
for some $l \in \lip(L)$, and $\psi: (\R^d)^n \to \R^n$ which is $L$-Lipschitz. Suppose further that $(Y,\kZ) \in \sinf \times \bmo$ solves \eqref{bsde}. Then we have 
\begin{align*}
    \norm{Y}_{\sinf} + \norm{\kZ}_{\bmo} \leq C, \, \, \const{\norm{\xi}_{\linf}, L}. 
\end{align*}
\end{proposition}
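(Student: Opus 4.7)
The strategy is to bound $\norm{Y}_{\sinf}$ first, using a Girsanov change of measure that converts the BSDE into a linear BSDE with bounded matrix coefficient, and then to deduce the $\bmo$ bound on $\kZ$ from the Lyapunov function supplied by Proposition~\ref{prop:lyapunovexist}.

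For the $\sinf$ estimate on $Y$, first note that $\psi$ being $L$-Lipschitz and $\kZ \in \bmo$ together imply $\psi(\kZ) \in \bmo$ (with $\bmo$-norm controlled a priori by $L$, $T$, $|\psi(0)|$, and $\norm{\kZ}_{\bmo}$). By Theorem~\ref{thm:1drh}, $\mathcal{E}(\int \psi(\kZ)\cdot d\kB)$ is a uniformly integrable martingale, so we may define an equivalent probability measure $\tilde{\bP}$ by $d\tilde{\bP}/d\bP = \mathcal{E}(\int \psi(\kZ)\cdot d\kB)_T$, under which $\tilde{B} \coloneqq \kB - \int_0^{\cdot} \psi(\kZ)\, dt$ is a Brownian motion and the BSDE reads
\begin{align*}
    Y = \xi + \int_{\cdot}^T g(t,Y,\kZ)\, dt - \int_{\cdot}^T \kZ\, d\tilde{B}.
\end{align*}
Linearizing the Lipschitz driver $g$ in the usual way produces $g(t,Y,\kZ) = g(t,0,0) + \alpha Y + \beta \kZ$ with $\alpha \in \lii(\R^{n\times n})$, $\beta \in \lii((\R^d)^{n\times n})$ and $\norm{\alpha}_{\lii}, \norm{\beta}_{\lii} \leq L$. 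Since $\beta \in \lii$ is sliceable and BSDE$(0)$ is trivially strongly well-posed in $\sinf$, Proposition~\ref{pro:sliceable} applied under $\tilde{\bP}$ (with $\tilde{B}$ in the role of $\kB$) yields a unique solution in $\sinf \times \bmo$ to the extended linear equation \eqref{lin-bsde-y} with source $g(\cdot,0,0) \in \lii \subset \bmoh$, together with the estimate
\begin{align*}
    \norm{Y}_{\sinf} \leqc \norm{\xi}_{\linf} + \norm{g(\cdot,0,0)}_{\bmoh} \leqc \norm{\xi}_{\linf} + L, \quad \const{L}.
\end{align*}
The crucial observation is that $\sinf$ is invariant under equivalent measure change, so this bound holds under $\bP$ as well, with no dependence on $\norm{\kZ}_{\bmo}$.

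For the $\bmo$ estimate on $\kZ$, set $c \coloneqq \norm{Y}_{\sinf}$, now controlled by $\norm{\xi}_{\linf}$ and $L$. Proposition~\ref{prop:lyapunovexist} applied to $f = g + \kz\psi(\kz)$ produces a pair $(h,k) \in \mathbf{Ly}(f,c)$ depending only on $L$ and $c$. Lemma~\ref{lem:zfromy} then delivers $\norm{\kZ}_{\bmo}^2 \leq kT + 2\sup_{|y|\leq c}|h(y)|$, which together with the previous step closes the estimate $\norm{Y}_{\sinf} + \norm{\kZ}_{\bmo} \leqc 1$, $\const{\norm{\xi}_{\linf},L}$.

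The main obstacle is the application of Proposition~\ref{pro:sliceable} under $\tilde{\bP}$ rather than $\bP$. This is essentially a non-issue because the hypotheses of that proposition (sliceability of $\Delta\kA$, $\lii$-bounds on $\alpha$ and $\Delta\kA$, and strong $\sinf$-well-posedness of a base BSDE) are all stable under equivalent measure change: the $\lii$ class is measure-invariant, any $\lii$-bounded process is sliceable under any probability measure by a fine enough deterministic partition of $[0,T]$, and the trivial BSDE$(0)$ is strongly well-posed in $\sinf$ under every equivalent measure. The proof of Proposition~\ref{pro:sliceable} therefore goes through verbatim with $\tilde{B}$ replacing $\kB$, and the resulting $\sinf$ bound, being measure-invariant, transfers back to $\bP$. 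A minor technical point is verifying that $(Y,\kZ)$ really is the unique $\sinf\times\bmo(\tilde{\bP})$ solution under $\tilde{\bP}$, which follows from Kazamaki's theorem on preservation of $\bmo$ under $\bmo$-martingale Girsanov transformations.
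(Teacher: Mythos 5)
Your proof is correct and follows the same skeleton as the paper's: a Girsanov change of measure with kernel $\psi(\kZ)$ to absorb the quadratic term, an $\sinf$ estimate for the resulting Lipschitz BSDE under the new measure (transferred back to $\bP$ by measure-invariance of $\sinf$), and then the $\bmo$ bound on $\kZ$ via Proposition \ref{prop:lyapunovexist} and Lemma \ref{lem:zfromy} exactly as in the paper. The one place you diverge is the middle step: the paper obtains the $\sinf$ bound by the elementary It\^o argument on $\exp{\lambda t}|Y_t|^2$ with $\lambda$ large, whereas you linearize $g$ and invoke the linear-BSDE machinery of Proposition \ref{pro:sliceable} (with base matrix $0$ and sliceable perturbation $\beta\in\lii$) under $\tilde{\bP}$. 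Your route works, and you correctly identify and address the two points it requires beyond the paper's: that $(Y,\kZ)$ lies in $\sinf\times\bmo(\tilde{\bP})$ (Kazamaki's invariance of $\bmo$ under $\bmo$-Girsanov transforms) and that the linear theory transfers to the filtration-preserving equivalent measure (predictable representation with respect to $\tilde B$). The paper's exponential-weight argument is lighter precisely because it avoids re-deriving the linear well-posedness theory under $\tilde{\bP}$; on the other hand, your route makes explicit why the constant depends only on $L$ and $\norm{\xi}_{\linf}$ and not on $\norm{\kZ}_{\bmo}$, and your remark that only the $Y$ estimate (not the $\bmo(\tilde{\bP})$ estimate on $\kZ$) can be transferred back without circularity is exactly the right observation. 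One small point both you and the paper gloss over: the $L$-Lipschitz hypothesis on $\psi$ alone does not bound $|\psi(\mathbf{0})|$, so either this should be added to the hypotheses or the affine part of $\psi$ absorbed into $g$; you at least flag the dependence on $|\psi(0)|$ explicitly.
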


\begin{proof}
To estimate $Y$, we first change measure, noting that 
\begin{align*}
    Y_{\cdot} = \xi + \int_{\cdot}^T g(\cdot, Y, \kZ) ds - \int_{\cdot}^T \kZ d\kB' 
\end{align*}
where by Girsanov $\kB' = \kB - \int \psi(\kZ) dt $ is a Brownian motion under the probability measure $\bQ$, with $\frac{d\bQ}{d\bP} = \mathcal{E}(\int \psi(\kZ) d\kB)_T$. The estimate on $Y$ now follows form standard techniques for estimating solutions to Lipschitz BSDEs, e.g. studying the It\^o decomposition of $\exp{\lambda t} |Y_t|^2$ for $\lambda$ large enough. The corresponding estimate for $\kZ$ follows from Lemma \ref{lem:zfromy} and Proposition \ref{prop:lyapunovexist}. 
\end{proof}

\subsubsection{Existence with smooth terminal condition} \label{sub:ql}

We note that it will be helpful in what follows to use the notations for finite differences and derivatives introduced in \cite{jackson2021existence}, which allows us for example to write
\begin{align*}
    f(\cdot, Y^{(1)}, \kZ^{(1)}) - f(\cdot, Y^{(2)}, \kZ^{(2)}) = \frac{\Delta f}{\Delta Y} \Delta Y + \frac{\Delta f}{\Delta \kZ} \Delta \kZ
\end{align*}
whenever $f$ is a driver and $Y^{(i)}$ and $\kZ^{(i)}$ are processes with appropriate dimensions and $\Delta Y = Y^{(1)} - Y^{(2)}$, $\Delta \kZ = \kZ^{(1)} - \kZ^{(2)}$. Here $\frac{\Delta f}{\Delta Y}$ and $\frac{\Delta f}{\Delta Z}$ are processes taking values in $\R^{n \times n}$ and $(\R^d)^{n \times n}$, respectively. In the case $d = 1$, for example we would have
\begin{align*}
    \frac{\Delta f}{\Delta Y} = 1_{\Delta Y \neq 0}\frac{f(\cdot, Y^{(1)}, Z^{(1)}) - f(\cdot, Y^{(2)}, Z^{(1)})}{|\Delta Y|^2} (\Delta Y)^T, \\
    \frac{\Delta f}{\Delta Z} = 1_{\Delta \kZ \neq 0}\frac{f(\cdot, Y^{(1)}, \kZ^{(2)}) - f(\cdot, Y^{(2)}, Z^{(2)})}{|\Delta Z|^2} (\Delta Z)^T. 
\end{align*}
The extension to $d > 1$ is natural, as is the similar notation which will be used for differentiating $f$ in the variables $y$ and $\kz$. 

The goal of this sub-section is to prove the following Proposition.

\begin{proposition} \label{pro:smoothexist}
Suppose that $\xi \in \doi$ and $f \in \ql(L)$. Then, there exists a unique solution $(Y,\kZ) \in \sinf \times \bmo$ to \eqref{bsde}, which satisfies the estimates 
\begin{align*}
    \norm{Y}_{\sinf} + \norm{\kZ}_{\bmo} \leq C, \, \, \const{L, \norm{\xi}_{\linf}}, \\
    \norm{\kZ}_{\lii} \leq C, \text{ where } \const{L, \norm{\xi}_{\linf}, \norm{D\xi}_{\lii}}.
\end{align*}
\end{proposition}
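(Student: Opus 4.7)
My plan is to follow the Malliavin-calculus strategy of Briand-Elie, as adapted to systems in \cite{harter2019} and \cite{jackson2021existence}, with the new linear-BSDE theory of Sections \ref{sec:existunique}--\ref{sec:structural} playing the role that the one-dimensional reverse H\"older inequality plays when $n=1$. The key algebraic observation is that for $f = g + \kz b^T \kz$, a direct computation of the Jacobian in $\kz$ yields
\begin{align*}
    \tpd{f}{\kz}(t,\omega,y,\kz) = \tpd{g}{\kz}(t,\omega,y,\kz) + \kz b^T + \text{Diag}(b^T \kz),
\end{align*}
which is precisely the class $\ka b^T + \text{Diag}(\klambda)$ covered by Proposition \ref{pro:outerdiag}, perturbed by the $\lii$ matrix $\tpd{g}{\kz}$. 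An analogous identity holds for the finite-difference quotient of $f$ in $\kz$.

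\emph{Uniqueness} is the cleanest step. Given two $\sinf \times \bmo$ solutions $(Y^{(i)}, \kZ^{(i)})$, the pair $(\Delta Y, \Delta \kZ)$ solves \eqref{lin-bsde-y} with $\alpha \in \lii$, $\beta = 0$, and $\kA$ of the form $\ka b^T + \text{Diag}(\klambda)$ plus an $\lii$ perturbation from $g$, where $\ka$ and $\klambda$ are linear combinations of $\kZ^{(1)}, \kZ^{(2)} \in \bmo$. Propositions \ref{pro:outerdiag} and \ref{pro:sliceable} together deliver strong $\sinf$-well-posedness of this linear BSDE, forcing $\Delta Y \equiv 0$ and hence $\Delta \kZ \equiv 0$.

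For \emph{existence}, I would combine an approximation scheme with a uniform Malliavin bound. Concretely: approximate the driver $f$ by a sequence of globally Lipschitz drivers $f_N$ (for example, by multiplying the quadratic term by a smooth cutoff of $|\kz|$) that still satisfy the hypotheses of Proposition \ref{pro:apriori} with constants uniform in $N$, so that classical Lipschitz BSDE theory provides unique solutions $(Y^N,\kZ^N) \in \sinf \times \bmo$ with uniform bounds. Assuming the cutoff is arranged so that, in the un-truncated region, $\tpd{f_N}{\kz}$ retains the form $\tpd{g}{\kz} + \kZ^N b^T + \text{Diag}(b^T \kZ^N)$, Malliavin-differentiating the BSDE for $Y^N$ produces a linear BSDE for $D_s Y^N$ whose coefficient lies in the class of Propositions \ref{pro:outerdiag} and \ref{pro:sliceable}. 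Invoking the strong $\sinf$-well-posedness and the representation formula from Proposition \ref{pro:linfexists}, I would conclude
\begin{align*}
    \sup_{0 \le s \le T} \norm{D_s Y^N_t}_{\lii} \leq C, \quad \const{L, \norm{\xi}_{\linf}, \norm{D\xi}_{\lii}},
\end{align*}
independent of $N$. Since $\kZ^N_t = D_t Y^N_t$ a.s., this yields $\norm{\kZ^N}_{\lii} \leq C$ uniformly; choosing $N$ larger than $C$ kills the truncation, and $(Y^N, \kZ^N)$ solves the original BSDE with the stated estimates.

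The main obstacle is \emph{uniformity} in $N$ of the Malliavin $\lii$ estimate. This hinges on Proposition \ref{pro:rop} (or its diagonal extension \ref{pro:outerdiag}): the reverse-H\"older constant of the stochastic exponential of a right-outer-product coefficient $\kV b^T$ depends only on $|b|$ and $\norm{\kV}_{\bmo}$, not on $\norm{\kV}_{\lii}$. Through the representation formula of Proposition \ref{pro:linfexists}, this translates into an $\lii$-bound on $D_s Y^N$ governed by $\norm{\kZ^N}_{\bmo}$, which is itself controlled uniformly in $N$ by Proposition \ref{pro:apriori}. Without this bmo-versus-$\lii$ distinction for outer-product coefficients -- the central phenomenon of Section \ref{sec:structural} -- the Malliavin estimate would degrade along the approximation and the argument could not close.
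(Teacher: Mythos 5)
Your overall strategy---truncate the quadratic term, obtain uniform $\sinf\times\bmo$ bounds from Proposition \ref{pro:apriori}, Malliavin-differentiate, recognize the coefficient of the Malliavin derivative of $\kZ$ as an outer-product-plus-diagonal matrix perturbed by the $\lii$ (hence sliceable) term $\tpd{g}{\kz}$, and close via Propositions \ref{pro:outerdiag}, \ref{pro:linfexists} and \ref{pro:sliceable}, using that the well-posedness constants depend only on $|b|$ and on bmo norms that are uniform along the approximation---is exactly the paper's proof, and your uniqueness argument matches Proposition \ref{pro:sinfstab}.

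The one step that would fail as written is the truncation. If you literally multiply the quadratic term by a smooth cutoff $\chi_N$ of $|\kz|$, the $\kz$-Jacobian acquires the extra term $(\kz b^T\kz)\,(\nabla\chi_N)^T$, an outer product of two \emph{processes}; this is neither of the form $\ka b^T+\text{Diag}(\klambda)$ with a constant vector $b$ nor uniformly sliceable, so Proposition \ref{pro:outerdiag} does not apply to it. Your hedge that the Jacobian ``retains the form \dots in the un-truncated region'' does not rescue this: the linear BSDE for $D_\theta Y^N$ must have a well-structured coefficient on all of $[\theta,T]\times\Omega$, not merely on the event $\{|\kZ^N|\le N\}$. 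The paper's fix is to compose the cutoff with the scalar $b^T\kz\in\R^d$ rather than with $\kz$ itself: setting $f^k = g + \kz\,\phi^k(b^T\kz)$ with $\phi^k$ smooth, $1$-Lipschitz and equal to the identity on $B_k(0)$, one gets \emph{globally}
\begin{align*}
  \tpd{f^k}{\kz} = \tpd{g}{\kz} + \ka_k b^T + \text{Diag}\big(\phi^k(b^T\kz)\big), \qquad (\ka_k)^i = \nabla\phi^k(b^T\kz)^T\kz^i,
\end{align*}
so the outer-product-plus-diagonal structure, with $\norm{\ka_k}_{\bmo}$ and $\norm{\phi^k(b^T\kZ^{(k)})}_{\bmo}$ controlled by $\norm{\kZ^{(k)}}_{\bmo}$, survives the truncation everywhere. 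With that replacement your argument coincides with the paper's.
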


\begin{proof}
Since $f \in \ql(L)$, we may write $f(t,\omega, y, \kz) = l(t,\omega, y, \kz) + \kz b^T \kz$ for some $g \in \lip(L)$, $b \in \R^n$. We begin by selecting a sequence $\phi_k$ of maps $\phi_k : \R^d \to \R^d$ with the following properties. 
\begin{itemize}
    \item $\phi_k$ is smooth, radial, and $1$-Lipschitz, and compactly supported. 
    \item $\phi_k$ is equal to the identity on $B_k(0) \subset \R^d$. 
\end{itemize}
We then define a sequence of drivers $f^k$ by 
\begin{align*}
    f^k(t,\omega, y, \kz) = g(t,\omega, y, \kz) + \kz \phi^k(b^T \kz). 
\end{align*}
Then for each $k$, $f^k$ is Lipschitz in $y$ and $\kz$, and so standard theory for Lipschitz BSDEs gives us a solution $(Y^{(k)},\kZ^{(k)}) \in \sinf \times \bmo$ to the equation 
\begin{align} \label{ykeqn}
    Y_{\cdot}^{(k)} = \xi + \int_{\cdot}^T f^k(\cdot, Y^{(k)}, \kZ^{(k)}) dt - \int_{\cdot}^T \kZ^{(k)} d\kB. 
\end{align}
Since each $f^k$ lies in $\overline{\ql}( \max \{L, 1\})$, Proposition \ref{pro:apriori} provides a constant $C = C(\norm{\xi}_{\linf}, L)$ such that 
\begin{align*}
    \sup_k \big(\norm{Y^{(k)}}_{\sinf} + \norm{\kZ}^{(k)}_{\bmo}\big) \leq C. 
\end{align*}
Now for $\theta \leq t$, we take the Malliavin derivative of \eqref{ykeqn} (as justified by Proposition 5.3 of \cite{karoui}) and apply the chain rule and product rule to obtain 
\begin{align} \label{deriveqn}
    D_{\theta} Y^{(k)}_t = D_{\theta} \xi + \int_t^T \big(D_{\theta}\big( l(\cdot, Y^{(k)}, \kZ^{(k)})\big)_s + D_{\theta}\big(\kZ^{(k)} \phi^k(b^T \kZ^{(k)})\big)_s\big) ds - \int_t^T D_{\theta} \kZ^{(k)}_s d\kB_s  \nonumber \\
    = D_{\theta} \xi + \int_t^T \big(\alpha^k_s D_{\theta}Y^{(k)}_s + (\kA^k_s + \Delta \kA^k_s) D_{\theta} \kZ^{(k)}_s + \beta^k_s \big)dt - \int_t^T D_{\theta}\kZ_s^{(k)} d\kB_s, 
\end{align}
where 
\begin{align*}
    \alpha^k = \pd{l}{y}(\cdot, Y^{(k)}, \kZ^{(k)}), \, \,
    \beta^k = Dl^k(\cdot, Y^{(k)}, \kZ^{(k)}), 
    \\ \kA^k = \ka_k b^T + \text{Diag}(\phi^k(b^T\kZ^{(k)})), \, \,
    \Delta \kA^k = \pd{l}{\kz}(\cdot, Y^{(k)}, \kZ^{(k)}),
\end{align*}
and $\ka_k \in \bmo((\R^d)^n)$ is given by 
\begin{align*}
    (\ka_k)^i = \nabla\phi_k(\kZ^{(k)})^T (\kZ^{(k)})^i.
\end{align*}
We view this a as a BSDE on $[\theta, T]$. 
Notice that since $l \in \lip(L)$, $\phi^k$ is 1-Lipschitz, and $\sup_k \big(\norm{Y^{(k)}}_{\sinf} + \norm{\kZ^{(k)}}_{\bmo}\big) \leq C, \const{L, \norm{\xi}_{\linf}}$, we must have 
\begin{align*}
    \norm{\ka_k}_{\bmo} + \norm{\alpha^k}_{\lii} + \norm{\beta^k}_{\lii} + \norm{\phi^k(b^T \kZ^{(k)})}_{\bmo} \leq C, \const{\norm{\xi}_{\lii}, L}. 
\end{align*}
Propositions \ref{pro:outerdiag} and \ref{pro:linfexists} thus show that that BSDE($\kA$) is strongly well-posed in $\sinf$, with $\opnorm{S_{\kA}}_{\infty, s} \leq C$, $\const{\norm{\xi}_{\linf}, L}$. Then Proposition \ref{pro:sliceable}, together with the fact that $D_\theta Y_{\theta} = Z_{\theta}$ (see \cite{karoui} for a precise statement), shows that
\begin{align*}
    \sup_k \norm{\kZ^{(k)}}_{\lii} \leq C, \, \, \const{L, \norm{\xi}_{\lii},\norm{D \xi}_{\lii}}. 
\end{align*}
Since $f^k(t,\omega, y, \kz) = f(t,\omega, y, \kz)$ for $|\kz| \leq k$, we conclude that $(Y^{(k)}, \kZ^{(k)})$ solves \eqref{bsde} as soon as $k > C$. 
\end{proof}

\subsubsection{Proof of Theorem \ref{thm:qlmain}}

We start with two estimates, which show a sort of stability (and in particular uniqueness) for \eqref{bsde}. 

\begin{proposition} \label{pro:sinfstab}
Suppose that $\xi \in \doi$ and $f \in \ql(L)$. Suppose further that $(Y, \kZ)$, $(Y',\kZ') \in \sinf \times \bmo$ solve \eqref{bsde} with terminal conditions $\xi$ and $\xi'$, respectively. Then we have 
\begin{align*}
    \norm{\Delta Y}_{\sinf} + \norm{\Delta \kZ}_{\bmo} \leqc \norm{\Delta \xi}_{\linf}, \, \, \const{L, \max \{\norm{\kZ}_{\bmo}, \norm{\kZ'}_{\bmo}\}}, 
\end{align*}
where $\Delta Y = Y - Y'$, $\Delta \kZ = \kZ - \kZ'$, and $\Delta \xi = \xi - \xi'$. 
\end{proposition}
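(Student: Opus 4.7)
The plan is to linearize the difference equation for $(\Delta Y, \Delta \kZ) \coloneqq (Y - Y', \kZ - \kZ')$ and then feed it into the well-posedness machinery developed in Sections \ref{sec:existunique}--\ref{sec:structural}. Writing $f = g + \kz b^T \kz$ with $g \in \lip(L)$ and $|b| \leq L$, subtracting the two BSDEs gives
\begin{align*}
    \Delta Y_{\cdot} = \Delta \xi + \int_{\cdot}^T \bigl( g(\cdot, Y, \kZ) - g(\cdot, Y', \kZ') + \kZ b^T \kZ - \kZ' b^T \kZ' \bigr) dt - \int_{\cdot}^T \Delta \kZ \, d\kB.
\end{align*}
Using the standard finite-difference trick (as in the notation $\frac{\Delta g}{\Delta Y}$, $\frac{\Delta g}{\Delta \kZ}$ from \cite{jackson2021existence}), the Lipschitz part rewrites as $\alpha \Delta Y + \Delta \kA \, \Delta \kZ$ for measurable $\alpha \in \lii(\R^{n\times n})$ and $\Delta \kA \in \lii((\R^d)^{n\times n})$ with $\norm{\alpha}_{\lii}, \norm{\Delta \kA}_{\lii} \leqc L$.

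The core structural observation is for the quadratic term: the identity
\begin{align*}
    \kZ b^T \kZ - \kZ' b^T \kZ' \;=\; \Delta \kZ \, b^T \kZ + \kZ' b^T \Delta \kZ
\end{align*}
exhibits the quadratic difference as $\kA \, \Delta \kZ$ with
\begin{align*}
    \kA \;\coloneqq\; \kZ' b^T + \mathrm{Diag}(b^T \kZ).
\end{align*}
This is precisely the outer-product-plus-diagonal form treated by Proposition \ref{pro:outerdiag}, with $\ka = \kZ' \in \bmo((\R^d)^n)$, the fixed vector $b \in \R^n$, and $\klambda = b^T \kZ \in \bmo(\R^d)$. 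The relevant norms satisfy $\norm{\kZ'}_{\bmo}, \, \norm{b^T \kZ}_{\bmo}/L \leq \max\{\norm{\kZ}_{\bmo}, \norm{\kZ'}_{\bmo}\}$ and $|b| \leq L$, so Proposition \ref{pro:outerdiag} provides some $q$, depending only on $L$ and $\max\{\norm{\kZ}_{\bmo}, \norm{\kZ'}_{\bmo}\}$, for which BSDE($\kA$) is well-posed in $\sque$. By Theorem \ref{thm:main} and Proposition \ref{pro:linfexists}, this upgrades to strong well-posedness of BSDE($\kA$) in $\sinf$, with operator norm $\opnorm{S_{\kA}}_{\infty,s}$ controlled by the same quantities.

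Finally, bounded processes are sliceable (a sufficiently fine deterministic partition suffices), so Proposition \ref{pro:sliceable} extends strong well-posedness in $\sinf$ to the perturbed equation \eqref{lin-bsde-y} with coefficients $(\alpha, \, \kA + \Delta \kA)$. The corresponding a-priori estimate, applied to $(\Delta Y, \Delta \kZ)$ which is the unique $\sinf \times \bmo$-solution with terminal condition $\Delta \xi$ and $\beta = 0$, yields exactly
\begin{align*}
    \norm{\Delta Y}_{\sinf} + \norm{\Delta \kZ}_{\bmo} \leqc \norm{\Delta \xi}_{\linf},
\end{align*}
with constant depending only on $L$ and $\max\{\norm{\kZ}_{\bmo}, \norm{\kZ'}_{\bmo}\}$. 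The one delicate step is recognizing that the quadratic difference linearizes to a matrix of outer-product-plus-diagonal form rather than pure outer-product form --- this is what forces the use of Proposition \ref{pro:outerdiag} in place of the narrower Proposition \ref{pro:rop}. Once that algebraic observation is in hand, everything else is a bookkeeping exercise chaining together the structural results of Section \ref{sec:structural}.
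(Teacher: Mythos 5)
Your proposal is correct and follows essentially the same route as the paper: the same algebraic linearization $\kZ b^T\kZ - \kZ' b^T\kZ' = \Delta\kZ\, b^T\kZ + \kZ' b^T\Delta\kZ$, yielding $\kA = \kZ' b^T + \mathrm{Diag}(b^T\kZ)$, followed by Proposition \ref{pro:outerdiag} (upgraded to strong $\sinf$ well-posedness via Proposition \ref{pro:linfexists}) and Proposition \ref{pro:sliceable} for the bounded Lipschitz remainder $(\alpha, \Delta\kA)$. The paper's proof is exactly this, stated more tersely by deferring to the proof of Proposition \ref{pro:smoothexist}.
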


\begin{proof}
We write 
\begin{align*}
    \Delta Y_{\cdot} = \Delta \xi + \int_{\cdot}^T \big(g(\cdot, Y^{(1)}, \kZ^{(1)}) - g(\cdot, Y^{(1)}, \kZ^{(1)}) + \kZ^{(1)}b^T \kZ^{(1)} - \kZ^{(2)} b^T \kZ^{(2)} \big)dt \\ - \int_{\cdot}^T \Delta \kZ d\kB \\
    = \Delta \xi + \int_{\cdot}^T \big(g(\cdot, Y^{(1)}, \kZ^{(1)}) - g(\cdot, Y^{(1)}, \kZ^{(1)})  + \Delta \kZ b^T \kZ^1 + \kZ^2 b^T \Delta \kZ\big) dt - \int_{\cdot}^T \kZ d\kB,
   \\ = \Delta \xi + \int_{\cdot}^T \big(\alpha \Delta Y + (\kA + \Delta \kA) \Delta \kZ \big) dt - \int_{\cdot}^T \kZ d\kB, 
\end{align*}
where 
\begin{align*}
    \alpha = \frac{\Delta l}{\Delta Y}, \, \, 
    \Delta \kA = \frac{\Delta l}{\Delta \kZ}, \, \,
    \kA = \kZ^2 b^T + \text{Diag}(b^T \kZ^1).
\end{align*}
The result now follows as in the proof of Proposition \ref{pro:smoothexist}, by applying Proposition \ref{pro:outerdiag} to estimate $\opnorm{S_{\kA}}_{\infty,s}$, and then Proposition \ref{pro:sliceable} to obtain the desired estimate. 
\end{proof}

\begin{corollary} \label{cor:uniqueness}
If $f \in \ql(L)$, then there is at most one solution $(Y, \kZ) \in \sinf \times \bmo$ to \eqref{bsde}. 
\end{corollary}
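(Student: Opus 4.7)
The corollary is essentially immediate from Proposition \ref{pro:sinfstab}, so the plan is very short. Given two solutions $(Y,\kZ), (Y',\kZ') \in \sinf \times \bmo$ of \eqref{bsde} with the same terminal condition $\xi \in \linf$, I would simply set $\Delta \xi = \xi - \xi = 0$ and invoke the stability estimate
\begin{align*}
    \norm{\Delta Y}_{\sinf} + \norm{\Delta \kZ}_{\bmo} \leqc \norm{\Delta \xi}_{\linf} = 0,
\end{align*}
which forces $Y = Y'$ and $\kZ = \kZ'$.

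The only subtle point is that Proposition \ref{pro:sinfstab} is stated under the hypothesis $\xi \in \doi$, whereas here $\xi$ is merely bounded. So before quoting it I would note that the Malliavin regularity of $\xi$ plays no role in its proof: the argument linearizes the difference of the two equations as
\begin{align*}
    \Delta Y_{\cdot} = \Delta \xi + \int_{\cdot}^T \bPn{\alpha \Delta Y + (\kA + \Delta \kA)\Delta \kZ} dt - \int_{\cdot}^T \Delta \kZ \, d\kB,
\end{align*}
where $\kA = \kZ' b^T + \mathrm{Diag}(b^T \kZ)$ and $\Delta \kA = \Delta l / \Delta \kZ$, and then invokes Propositions \ref{pro:outerdiag}, \ref{pro:linfexists}, and \ref{pro:sliceable}. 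All of the relevant $\bmo$ and $\lii$ norms in that chain are controlled solely in terms of $L$ and $\max\{\norm{\kZ}_{\bmo}, \norm{\kZ'}_{\bmo}\}$, both of which are finite precisely because we assumed $(Y,\kZ),(Y',\kZ') \in \sinf \times \bmo$. Hence the stability estimate applies verbatim to any bounded terminal condition.

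I do not anticipate any real obstacle: the work was already done in proving Proposition \ref{pro:sinfstab}, and the corollary just records the $\Delta \xi = 0$ specialization. If one preferred not to rely on a casual extension of the hypothesis, an alternative is to approximate $\xi$ by a sequence $\xi_n \in \doi$ with $\xi_n \to \xi$ in $\linf$ and invoke Proposition \ref{pro:smoothexist} together with the stability estimate to pass to the limit; but since the proof of Proposition \ref{pro:sinfstab} never uses Malliavin differentiability of $\xi$, the direct argument above suffices.
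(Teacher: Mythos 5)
Your proof is correct and is exactly the argument the paper intends: the corollary is stated as an immediate consequence of Proposition \ref{pro:sinfstab} with $\Delta \xi = 0$, and the paper offers no further proof. Your observation that the $\xi \in \doi$ hypothesis in Proposition \ref{pro:sinfstab} is never used in its proof is also correct, and is consistent with the parallel Proposition \ref{pro:sinfstabud} in the unidirectional case, which states the same stability estimate for merely bounded terminal conditions.
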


The following Proposition is established using the same linearization technique as Proposition \ref{pro:sinfstab}. 

\begin{proposition} \label{pro:sqstab}
Suppose that $\xi \in \doi$ and $f \in \ql(L)$. Suppose further that $\xi, \xi' \in \linf$, and that $(Y, \kZ)$, $(Y',\kZ') \in \sinf \times \bmo$ solve \eqref{bsde} with terminal conditions $\xi$ and $\xi'$, respectively. Then there are constants $q^* = q^*(\max \{\norm{\kZ}_{\bmo}, \norm{\kZ'}_{\bmo}\})$ and $C_q = C_q(\max \{\norm{\kZ}_{\bmo}, \norm{\kZ'}_{\bmo}\})$ such that, for each $q > q^*$, we have
\begin{align*}
    \norm{\Delta Y}_{\sque} + \norm{\Delta \kZ}_{\ltq} \leq_{C_q} \norm{\Delta \xi}_{\lque}.
\end{align*}
\end{proposition}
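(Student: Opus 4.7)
The plan is to mimic the proof of Proposition \ref{pro:sinfstab} essentially verbatim, but replace the use of strong well-posedness in $\sinf$ with well-posedness in $\sque$. First I would linearize the equation for $\Delta Y = Y - Y'$, $\Delta\kZ = \kZ - \kZ'$. Writing $f = g + \kz b^T \kz$ with $g \in \lip(L)$ and expanding the quadratic term as
\begin{align*}
\kZ b^T \kZ - \kZ' b^T \kZ' = \Delta\kZ\, b^T \kZ + \kZ' b^T \Delta\kZ,
\end{align*}
one obtains, exactly as in Proposition \ref{pro:sinfstab}, the linear BSDE
\begin{align*}
\Delta Y_\cdot = \Delta \xi + \int_\cdot^T \bigl(\alpha \Delta Y + (\kA + \Delta \kA) \Delta\kZ\bigr) dt - \int_\cdot^T \Delta\kZ\, d\kB,
\end{align*}
where $\alpha = \tfrac{\Delta g}{\Delta Y}$, $\Delta\kA = \tfrac{\Delta g}{\Delta \kZ}$, and $\kA = \kZ' b^T + \mathrm{Diag}(b^T \kZ)$.

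Next I would read off the structural properties of the coefficients. Since $g \in \lip(L)$, both $\alpha$ and $\Delta\kA$ lie in $\lii$ with norms bounded by $L$. The matrix $\kA$ has precisely the form treated in Proposition \ref{pro:outerdiag}, namely an outer product plus a diagonal, with $\norm{\kZ'}_{\bmo}$, $|b|$, and $\norm{b^T \kZ}_{\bmo}$ all controlled by $L$ and $\max\{\norm{\kZ}_{\bmo}, \norm{\kZ'}_{\bmo}\}$. Proposition \ref{pro:outerdiag} then produces $q^* = q^*(L, \max\{\norm{\kZ}_{\bmo}, \norm{\kZ'}_{\bmo}\})$ such that BSDE($\kA$) is well-posed in $\sque$ for every $q > q^*$, with operator norm bounded by a constant $C_q$ of the same dependence.

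To incorporate the bounded perturbation $\Delta\kA$ and the linear term $\alpha \Delta Y$, I would then appeal to Proposition \ref{pro:sliceable}: any $\lii$ process is trivially sliceable, so BSDE($\kA + \Delta \kA$) remains well-posed in $\sque$ with the same qualitative dependencies, and the bounded coefficient $\alpha$ in front of $Y$ is absorbed by the second part of that proposition. Since $\Delta\xi \in \linf \subset \lque$ and the source term $\beta \equiv 0$, the resulting estimate
\begin{align*}
\norm{\Delta Y}_{\sque} + \norm{\Delta\kZ}_{\ltq} \leq_{C_q} \norm{\Delta\xi}_{\lque}
\end{align*}
follows immediately with a constant $C_q$ depending only on $L$ (hence absorbed as universal in the statement, since $f \in \ql(L)$ is fixed) and $\max\{\norm{\kZ}_{\bmo}, \norm{\kZ'}_{\bmo}\}$.

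The only potentially delicate point, which is not really an obstacle but worth flagging, is bookkeeping the parameter dependencies: one must verify that the threshold $q^*$ produced by Propositions \ref{pro:outerdiag} and \ref{pro:sliceable} depends only on the quantities allowed by the statement, and in particular not on the individual $\bmo$-norms of $\kZ$ and $\kZ'$ but only on their maximum. Both cited propositions give constants that are monotone in the relevant $\bmo$-norms, so passing to the maximum is harmless. Once this is checked, the proof is complete.
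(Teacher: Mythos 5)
Your proposal is correct and follows exactly the route the paper intends: the paper gives no separate proof of this proposition, stating only that it "is established using the same linearization technique as Proposition \ref{pro:sinfstab}," and your argument carries that out — the same decomposition $\kA = \kZ' b^T + \mathrm{Diag}(b^T\kZ)$, with Proposition \ref{pro:outerdiag} supplying $\sque$-well-posedness and Proposition \ref{pro:sliceable} absorbing $\alpha$ and $\Delta\kA$. Your remark on tracking the dependence of $q^*$ and $C_q$ is a sensible bit of diligence but raises no real issue.
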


Now we complete the proof of Theorem \ref{thm:qlmain}. 

\begin{proof}
Uniqueness has been established by Corollary \ref{cor:uniqueness}. For existence, we choose a sequence $\xi^k \in \doi$ such that $\sup_k \norm{\xi}_{\linf} \leq \norm{\xi}_{\linf}$ and $\xi^k \xrightarrow{\lque} \xi$ for each $q > 1$. Let $(Y^{(k)},\kZ^{(k)}) \in \sinf \times \bmo$ be the unique solution (whose existence is guaranteed by Proposition \ref{pro:smoothexist}) to the equation
\begin{align*}
    Y^{(k)}_{\cdot} = \xi^k + \int_{\cdot}^T f(\cdot, Y^{(k)}, \kZ^{(k)}) dt - \int_{\cdot}^T \kZ^{(k)} d\kB. 
\end{align*}
Since $\sup_k \norm{\xi_k}_{\linf} \leq \norm{\xi}_{\linf}$, we have
\begin{align} \label{seqest}
    \sup_k \big(\norm{Y^{(k)}}_{\sinf} + \norm{\kZ^{(k)}}_{\bmo}\big) \leq C, \,\, \const{L, \norm{\xi}_{\linf}}. 
\end{align} 
Thus by Proposition \ref{pro:sqstab}, for some $q$ we have 
\begin{align*}
    \norm{Y^{(k)} - Y^{(j)}}_{\sque} + \norm{\kZ^{(k)} - \kZ^{(j)}}_{\ltq} \leqc \norm{\xi^k - \xi^j}_{\lque}, \, \, \const{L, \norm{\xi}_{\linf}}. 
\end{align*}
In particular, $\{(Y^{(k)}, \kZ^{(k)})\}$ is Cauchy in $\sque \times \ltq$, and so converges in $\sque \times \ltq$ to the desired solution $(Y,\kZ)$ to \eqref{bsde}. It follows from \eqref{seqest} that in fact $Y \in \sinf$ and $\norm{Y}_{\sinf} \leq \sup_k \norm{Y^{(k)}}_{\sinf} \leq C, \,\, \const{L, \norm{\xi}_{\linf}}$. To estimate $\norm{\kZ}_{\bmo}$, we note that we may assume without loss of generality that $q \geq 2$, so that 
\begin{align*}
    \bE[\int_0^T |\kZ^{(k)} - \kZ|^2 dt] = \norm{\kZ^{(k)} - \kZ}_{\ltt}^2 \leq \norm{\kZ^{(k)} - \kZ}_{\ltq}^2 \to 0.
\end{align*}
In particular, by passing to a subsequence if necessary, we may assume that $\int_0^T |\kZ^{(k)} - \kZ|^2 dt \to 0$ a.s., and so for any topping time $\tau$,
\begin{align*}
    \bE_{\tau}[\int_{\tau}^T |\kZ|^2 dt] = \bE_{\tau}[\lim_{k \to \infty} \int_{\tau}^T|\kZ^{(k)}|^2 dt] \leq \liminf_{k \to \infty} \bE_{\tau}[\int_{\tau}^T |\kZ^{(k)}|^2 dt].
\end{align*}
That $\kZ \in \bmo$, as well as the desired estimate on $\norm{\kZ}_{\bmo}$, now follows from \eqref{seqest}.
\end{proof}

\subsection{BSDEs with unidirectional quadratic drivers}

\begin{definition}
  A driver $f$ is called  \define{unidirectional quadratic} if there is a constant $L$, a driver $g \in \lip(L) \cap \mal(L)$, a constant $a \in \R^n$ and a $C^1$ function $h : (\R^d)^n \to \R$ such that
    \begin{enumerate}
        \item $f(t, \omega, y ,\kz) = g(t,\omega, y, \kz) + a h(\kz)$, 
        \item $|h(\mathbf{0})| \leq L$,
        \item $|h(\kz) - h(\kz')| \leq L(1 + |\kz| + |\kz'|)|\kz - \kz'|$.
    \end{enumerate}
    In this case, we write $f \in \ud(L)$, and set $\ud = \cup_L \ud(L)$. 
\end{definition}
\begin{remark}
To be clear, condition 1 above states that 
\begin{align*}
    f^i(t,\omega, y, \kz) = g^i(t,\omega, y, \kz) + a_i h(\kz), 
\end{align*}
where $a = (a_1,...,a_n)$. 
We choose the term unidirectional to reflect the fact that all of the quadratic growth in $f$ ``points" in the direction of $a \in \R^n$. 
\end{remark}
Here is the main result regarding unidirectional quadratic drivers. 

\begin{theorem} \label{thm:udmain}
Suppose that $\xi \in \linf$ and $f \in \ud(L) \cap \apb(\rho, \{a_m\})$. Then, there exists a unique solution $(Y,\kZ) \in \sinf \times \bmo$ to \eqref{bsde}, which satisfies the estimates 
\begin{align*}
    \norm{Y}_{\sinf} + \norm{\kZ}_{\bmo} \leq C, \text{ where } \const{ \norm{\xi}_{\linf}, \rho, \{a_m\}}.
\end{align*}
If in addition $\xi \in \doi$, then we have 
\begin{align*}
    \norm{\kZ}_{\lii} \leq C, \text{ where } \const{L, \norm{\xi}_{\linf}, \norm{D\xi}_{\lii}, \rho, \{a_m\}}.
\end{align*}
\end{theorem}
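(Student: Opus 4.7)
The plan is to follow the three-stage strategy used for Theorem \ref{thm:qlmain}, with two key structural substitutions. The $\sinf \times \bmo$ a priori estimate of Proposition \ref{pro:apriori} (obtained there via Lyapunov functions and Girsanov's theorem) is replaced by Lemma \ref{lem:ab} applied to the condition $(AB)$; this is essential here since the pure quadratic term $a h(\kz)$ has no Girsanov structure and does not admit a Lyapunov function without further sign input. Moreover, the linearization of $ah(\kz)$ produces matrices of the form $a \kb^T$ with \emph{left} outer-product structure, so Proposition \ref{pro:lop} plays the role occupied by Propositions \ref{pro:rop} and \ref{pro:outerdiag} in Section \ref{sec:ql}.

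For existence under the smoothness hypothesis $\xi \in \doi$, I would introduce Lipschitz approximations $f^k = g + a\,(h \circ \phi^k)$, where $\phi^k : (\R^d)^n \to (\R^d)^n$ is a smooth contractive cutoff equal to the identity on $\{|\kz| \leq k\}$, chosen so that each $f^k$ inherits condition $(AB)$ with constants independent of $k$. Standard Lipschitz BSDE theory yields solutions $(Y^{(k)}, \kZ^{(k)}) \in \sinf \times \bmo$, and Lemma \ref{lem:ab} then provides a uniform bound on $\norm{Y^{(k)}}_{\sinf} + \norm{\kZ^{(k)}}_{\bmo}$ depending only on $\norm{\xi}_{\linf}$, $\rho$, and $\{a_m\}$. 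Taking Malliavin derivatives (cf.\ Proposition 5.3 of \cite{karoui}), $D_\theta Y^{(k)}$ solves on $[\theta, T]$ a linear BSDE with driver coefficient on $D_\theta \kZ^{(k)}$ of the form
\begin{align*}
\kA^k + \Delta \kA^k, \qquad \kA^k = a (\kb^k)^T, \qquad \Delta \kA^k = \pd{g}{\kz}(\cdot, Y^{(k)}, \kZ^{(k)}),
\end{align*}
where $(\kb^k)^j = \pd{(h \circ \phi^k)}{\kz^j}(\kZ^{(k)})$, together with bounded drift $\alpha^k = \pd{g}{y}(\cdot, Y^{(k)}, \kZ^{(k)}) \in \lii$ and bounded inhomogeneity $\beta^k = Dg(\cdot, Y^{(k)}, \kZ^{(k)}) \in \lii$. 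The structural bound $|\nabla h(\kz)| \leq L(1 + |\kz|)$ and the fact that $|\nabla \phi^k| \leq 1$, combined with the uniform $\bmo$ bound on $\kZ^{(k)}$, yield $\norm{\kb^k}_{\bmo} \leq C$ uniformly; Proposition \ref{pro:lop} then gives uniform strong well-posedness of BSDE$(\kA^k)$ in $\sinf$, and Proposition \ref{pro:sliceable} absorbs the sliceable $\lii$ perturbation $\Delta \kA^k$. Combining this with $D_\theta Y^{(k)}_\theta = \kZ^{(k)}_\theta$ delivers $\sup_k \norm{\kZ^{(k)}}_{\lii} \leq C$ depending only on $L$, $\norm{\xi}_{\linf}$, $\norm{D\xi}_{\lii}$, $\rho$, and $\{a_m\}$; once $k$ exceeds this constant, $\phi^k(\kZ^{(k)}) = \kZ^{(k)}$, so $(Y^{(k)}, \kZ^{(k)})$ solves \eqref{bsde}.

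Stability, uniqueness, and the extension to general $\xi \in \linf$ then proceed exactly as in the quadratic-linear case. The difference of two solutions $(Y, \kZ), (Y', \kZ') \in \sinf \times \bmo$ satisfies a linear BSDE with coefficient on $\Delta \kZ$ equal to $a \big(\frac{\Delta h}{\Delta \kZ}\big)^T + \frac{\Delta g}{\Delta \kZ}$; the first summand again has left outer-product structure, and the hypothesis $|h(\kz) - h(\kz')| \leq L(1 + |\kz| + |\kz'|)|\kz - \kz'|$ gives $|\frac{\Delta h}{\Delta \kZ}| \leq L(1 + |\kZ| + |\kZ'|) \in \bmo$, while the second summand lies in $\lii$. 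Propositions \ref{pro:lop} and \ref{pro:sliceable} therefore yield stability analogues of Propositions \ref{pro:sinfstab} and \ref{pro:sqstab} in $\sinf \times \bmo$ and in $\sque \times \ltq$ for some $q$. Uniqueness follows, and approximating a general $\xi \in \linf$ by $\xi^k \in \doi$ with $\norm{\xi^k}_{\linf} \leq \norm{\xi}_{\linf}$ and $\xi^k \to \xi$ in every $\lque$ produces a Cauchy sequence in $\sque \times \ltq$ whose limit is the desired solution, with the $\sinf \times \bmo$ bound recovered by lower semicontinuity.

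I expect the main technical obstacle to be the design of the cutoff $\phi^k$ so that $f^k$ uniformly preserves condition $(AB)$. A naive radial cutoff can move $(a_m^T a) h$ in the wrong direction in regions where $h$ is being modified (depending on the sign of $a_m^T a$) and thus violate the sign-sensitive inequality defining $(AB)$; a coordinate-wise truncation adapted to $\mathrm{sign}(a_m^T a)$ for each $m$, or a truncation applied to the scalar field $h$ itself rather than to its vector argument, should resolve this at the cost of additional bookkeeping. Every other step reduces to an application of the linear-BSDE machinery of Sections \ref{sec:existunique}--\ref{sec:structural}, exactly paralleling the treatment of quadratic-linear drivers.
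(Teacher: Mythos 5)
Your architecture is exactly the paper's: Lemma \ref{lem:ab} in place of the Lyapunov/Girsanov a priori bound, left outer-product structure and Proposition \ref{pro:lop} in place of Propositions \ref{pro:rop} and \ref{pro:outerdiag}, the same Malliavin linearization with $\kA^k = a(\kb^k)^T$ and a sliceable $\lii$ perturbation absorbed by Proposition \ref{pro:sliceable}, and the same stability/approximation scheme to pass from $\xi \in \doi$ to general $\xi \in \linf$. The one place you deviate is also the one place you leave a gap, and you correctly flag it yourself: with your truncation $f^k = g + a\,(h\circ\phi^k)$, the inequality $a_m^T f^k \leq \rho + \tfrac12|a_m^T\kz|^2$ does not follow from $(AB)$ for $f$, because $g$ is still evaluated at $\kz$ while $h$ is evaluated at $\phi^k(\kz)$, and the discrepancy $(a_m^T a)\bigl(h(\phi^k(\kz)) - h(\kz)\bigr)$ is of quadratic size precisely where the cutoff acts. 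Neither of your suggested repairs closes this: truncating the scalar field $h$ itself does not even produce a driver that is Lipschitz in $\kz$ (the gradient of $h$ is still unbounded on the set where $h$ is below the truncation level), and a sign-adapted coordinate-wise truncation is not well-defined when the signs of $a_m^T a$ vary over the positively spanning family $\{a_m\}$.

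The paper's resolution is simpler than either fix: truncate the $\kz$-argument of the \emph{entire} driver, setting $f^k(t,\omega,y,\kz) = g(t,\omega,y,\phi_k(\kz)) + a\,h(\phi_k(\kz)) = f(t,\omega,y,\phi_k(\kz))$ with $\phi_k$ radial, $1$-Lipschitz, and satisfying $|\phi_k(\kz)| \leq |\kz|$. Radiality means $\phi_k(\kz) = c(\kz)\kz$ with $0 \leq c(\kz) \leq 1$, hence $a_m^T\phi_k(\kz) = c(\kz)\,a_m^T\kz$ and $|a_m^T\phi_k(\kz)| \leq |a_m^T\kz|$; therefore $a_m^T f^k(t,\omega,y,\kz) = a_m^T f(t,\omega,y,\phi_k(\kz)) \leq \rho + \tfrac12|a_m^T\phi_k(\kz)|^2 \leq \rho + \tfrac12|a_m^T\kz|^2$, so every $f^k$ lies in $\apb(\rho,\{a_m\})$ with constants independent of $k$. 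Composing $g$ with $\phi_k$ as well is harmless: since $\phi_k$ is $1$-Lipschitz, the resulting $\alpha^k$, $\beta^k$, and $\Delta\kA^k$ obey the same $\lii$ bounds, and the rest of your argument (uniform $\kb^k$ bound in $\bmo$ from $|\nabla h(\kz)| \leq L(1+|\kz|)$, strong well-posedness of the linearized equation, the identity $D_\theta Y^{(k)}_\theta = \kZ^{(k)}_\theta$, and the observation that $f^k = f$ once $k$ exceeds the uniform $\lii$ bound on $\kZ^{(k)}$) goes through verbatim. With that substitution your proof coincides with the paper's.
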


\begin{remark}
As with Theorem \ref{thm:qlmain}, it is possible in Theorem \ref{thm:udmain} to remove the smoothness assumptions on $g$ through an approximation argument, and it is also possible to assume that $g$ is merely sub-quadratic, rather than Lipschitz. 
\end{remark}

The rest of this subsection is devoted to a proof of Theorem \ref{thm:udmain}. We note that most of the arguments are very similar to the arguments in subsection \ref{sub:ql} (with Proposition \ref{pro:lop} replacing \ref{pro:outerdiag} and Lemma \ref{lem:ab} replacing Proposition \ref{pro:apriori}), and thus will be abbreviated.

\subsubsection{Existence with smooth terminal condition}

\begin{proposition} \label{pro:smoothexistud}
Suppose that $\xi \in \doi$ and $f \in \ud(L) \cap \apb(\rho, \{a_m\})$. Then, there exists a unique solution $(Y,\kZ) \in \sinf \times \bmo$ to \eqref{bsde}, which satisfies the estimates 
\begin{align*}
    \norm{Y}_{\sinf} + \norm{\kZ}_{\bmo} \leq C, \, \, \const{ \norm{\xi}_{\linf}, \rho, \{a_m\}}, \\
    \norm{\kZ}_{\lii} \leq C, \, \, \const{L, \norm{\xi}_{\linf}, \norm{D\xi}_{\lii}, \rho, \{a_m\}}.
\end{align*}
\end{proposition}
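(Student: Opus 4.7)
The proof will mirror Proposition \ref{pro:smoothexist}, but with Lemma \ref{lem:ab} in place of Proposition \ref{pro:apriori} to provide the $\sinf \times \bmo$ a-priori bound, and Proposition \ref{pro:lop} in place of Proposition \ref{pro:outerdiag} to handle the linearised Malliavin equation. Writing $f = g + a\,h(\kz)$ with $g \in \lip(L) \cap \mal(L)$, $a \in \R^n$, and $h$ a scalar $C^1$ function of at most quadratic growth, I would fix a smooth radial truncation $\psi^k : (\R^d)^n \to (\R^d)^n$ with $\psi^k(\kz) = \kz$ on $\{|\kz| \leq k\}$, $\psi^k(\kz)$ a scalar multiple of $\kz$ with scalar in $[0,1]$, $|\nabla\psi^k| \leq 1$, and $|\psi^k| \leq k+1$ everywhere, and set
\begin{equation*}
    f^k(t,\omega, y, \kz) := f(t,\omega, y, \psi^k(\kz)).
\end{equation*}
Because $f$ is locally Lipschitz in $\kz$ and $\psi^k$ is bounded with bounded derivative, each $f^k$ is globally Lipschitz in $(y,\kz)$, so standard Lipschitz BSDE theory produces a unique $(Y^{(k)},\kZ^{(k)}) \in \sinf \times \bmo$ solving the BSDE with driver $f^k$. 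Applying the (AB) bound for $f$ at the point $(y,\psi^k(\kz))$ and using $|a_m^T \psi^k(\kz)| \leq |a_m^T \kz|$ gives $a_m^T f^k(t,\omega,y,\kz) \leq \rho + \tfrac{1}{2}|a_m^T\kz|^2$, so $f^k \in \apb(\rho, \{a_m\})$ with parameters independent of $k$. Lemma \ref{lem:ab} then yields
\begin{equation*}
    \sup_k \bigl( \norm{Y^{(k)}}_{\sinf} + \norm{\kZ^{(k)}}_{\bmo} \bigr) \leq C, \, \, \const{\norm{\xi}_{\linf}, \rho, \{a_m\}}.
\end{equation*}

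\textbf{Malliavin derivative and left outer-product structure.} For $\theta \leq t$, Malliavin-differentiating the equation for $(Y^{(k)},\kZ^{(k)})$ as in Proposition~5.3 of \cite{karoui} produces the linear BSDE on $[\theta,T]$
\begin{align*}
    D_\theta Y^{(k)}_t = D_\theta \xi &+ \int_t^T \bigl(\alpha^k_s D_\theta Y^{(k)}_s + (\Delta\kA^k_s + \kA^k_s) D_\theta\kZ^{(k)}_s + \beta^k_s\bigr) ds \\ & - \int_t^T D_\theta\kZ^{(k)}_s d\kB_s,
\end{align*}
with $\alpha^k := \pd{g}{y}(\cdot, Y^{(k)}, \psi^k(\kZ^{(k)}))$, $\beta^k := Dg(\cdot, Y^{(k)}, \psi^k(\kZ^{(k)}))$ and $\Delta\kA^k := \pd{g}{\kz}(\cdot, Y^{(k)}, \psi^k(\kZ^{(k)}))\,\nabla\psi^k(\kZ^{(k)})$ all uniformly bounded in $\lii$, while the quadratic part decomposes as
\begin{equation*}
    \kA^k = a\,(\kb^k)^T, \qquad \kb^k := \nabla\psi^k(\kZ^{(k)})^T \nabla h(\psi^k(\kZ^{(k)})),
\end{equation*}
which has \emph{left outer-product structure}. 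The growth bound $|\nabla h(\kz)| \leq L(1 + 2|\kz|)$ (inherited from condition (3) of $\ud(L)$), together with $|\nabla\psi^k| \leq 1$ and the uniform $\bmo$-bound on $\kZ^{(k)}$ from Step 1, yields $\sup_k \norm{\kb^k}_{\bmo} \leq C$. Proposition \ref{pro:lop} then applies to $\kA^k$, and combined with Proposition \ref{pro:linfexists} shows that BSDE($\kA^k$) is strongly well-posed in $\sinf$ with $\opnorm{S_{\kA^k}}_{\infty,s}$ uniform in $k$; Proposition \ref{pro:sliceable} absorbs the $\lii$ perturbations $\alpha^k$ and $\Delta\kA^k$, giving uniform strong well-posedness of the full linearised equation. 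Since $D_\theta \xi \in \lii$ (as $\xi \in \doi$) and $\beta^k \in \lii$ uniformly, one concludes $\sup_{k,\theta} \norm{D_\theta Y^{(k)}}_{\sinf} \leq C$, and the identity $\kZ^{(k)}_t = D_t Y^{(k)}_t$ from \cite{karoui} delivers the key bound $\sup_k \norm{\kZ^{(k)}}_{\lii} \leq C$.

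\textbf{Conclusion and main obstacle.} Once $k$ exceeds this final constant, $|\kZ^{(k)}| \leq k$ and hence $\psi^k(\kZ^{(k)}) = \kZ^{(k)}$, so $(Y^{(k)}, \kZ^{(k)})$ solves the original BSDE; uniqueness follows from an $\sinf$-stability estimate linearising the difference of two solutions exactly as in Proposition \ref{pro:sinfstab}, with Proposition \ref{pro:lop} again providing the required well-posedness of the resulting linear equation. The central technical hurdle is designing the truncation so as to simultaneously (i) preserve (AB) with $k$-uniform parameters, which forces the composition-in-$\kz$ form $f^k = f(\cdot, \cdot, \cdot, \psi^k(\cdot))$ rather than truncating $h$ alone (truncating $h$ naively introduces a residual term of order $|a_m|L|\kz|$ that cannot be absorbed into $\tfrac{1}{2}|a_m^T\kz|^2$ because the latter degenerates in directions orthogonal to $a_m$), and (ii) expose left-outer-product structure of the Malliavin-linearised quadratic coefficient $\kA^k$, which leans on the defining feature of $\ud$ drivers, namely that the quadratic nonlinearity is the product of a fixed vector $a$ with a scalar function of $\kz$. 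The remaining bookkeeping is routine.
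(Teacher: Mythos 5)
Your proposal follows essentially the same route as the paper's proof: truncate the driver in $\kz$ by a smooth radial contraction composed with the whole of $f$ (so that (AB) is preserved with $k$-uniform parameters), invoke Lemma \ref{lem:ab} for the uniform $\sinf\times\bmo$ bound, Malliavin-differentiate, and exploit the left outer-product structure $\kA^k = a(\kb^k)^T$ via Propositions \ref{pro:lop}, \ref{pro:linfexists}, and \ref{pro:sliceable}. Your explicit chain-rule formula $\kb^k = \nabla\psi^k(\kZ^{(k)})^T\nabla h(\psi^k(\kZ^{(k)}))$ is in fact more careful than the paper's own writing of this term.
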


\begin{proof}
Let $g, h$, and $a$ be as given in the definition of unidirectional quadratic driver. We begin by selecting a sequence $\phi_k$ of maps $\phi_k : (\R^d)^n \to (\R^d)^n$ with the following properties. 
\begin{itemize}
    \item $\phi_k$ is smooth, radial, and $1$-Lipschitz, bounded, and such that $|\phi_k(\kz)| \leq |\kz|$. 
    \item $\phi_k$ is equal to the identity on $B_k(0) \subset (\R^d)^n$. 
\end{itemize}
We then define a sequence of drivers $f^k$ by 
\begin{align*}
    f^k(t,\omega, y, \kz) = g(t,\omega, y, \phi_k(\kz)) + a h(\phi_k(\kz)). 
\end{align*}
Then for each $k$, $f^k$ is Lipschitz in $y$ and $\kz$, and so standard theory for Lipschitz BSDEs gives us a solution $(Y,\kZ) \in \sinf \times \bmo$ to the equation 
\begin{align} \label{ykeqnud}
    Y_{\cdot}^k = \xi + \int_{\cdot}^T f^k(\cdot, Y^{(k)}, \kZ^{(k)}) dt - \int_{\cdot}^T \kZ^{(k)} d\kB. 
\end{align}
The properties of $\phi_k$ and the fact that $f \in \apb(\rho, \{a_m\})$ show that each $f^k \in \apb(\rho, \{a_m\})$, as well. Indeed, we simply check that 
\begin{align*}
    a_m^T f^k(t,\omega,y,\kz) = a_m^T f(t,\omega, y, \phi^k(\kz)) \leq \rho + \frac{1}{2} |a_m^T \phi^k(\kz)|^2 \leq \frac{1}{2}|a_m^T \kz|^2
\end{align*}
where the last inequality comes from the fact that $\phi_k$ is radial and $|\phi^k(\kz)| \leq |\kz|$. Thus Lemma \ref{lem:ab} gives the existence of a constant $\const{\norm{\xi}_{\linf}, \rho, \{a_m\}}$ such that
\begin{align*}
    \sup_k \big(\norm{Y^{(k)}}_{\sinf} + \norm{\kZ}^k_{\bmo}\big) \leq C. 
\end{align*}
Now for $\theta \leq t$, we take the Malliavin derivative of \eqref{ykeqn} (as justified by Proposition 5.3 of \cite{karoui}). We get
\begin{align} \label{deriveqnud}
D_{\theta} Y_t =
    D_{\theta} \xi + \int_t^T \big(\alpha^k_s Y_s + (\kA^k_s + \Delta \kA^k_s) D_{\theta} \kZ^{(k)}_s + \beta^k_s \big)dt - \int_t^T D_{\theta}Z_s^k d\kB_s, 
\end{align}
where 
\begin{align*}
    \alpha^k = \pd{g}{y}(\cdot, Y^{(k)},\kZ^{(k)}), \, \,
    \beta^k = Dg^k(\cdot, Y^{(k)}, \kZ^{(k)}),
    \\ \kA^k = a \kb_k^T, \, \,
    \Delta \kA^k = \pd{g}{\kz}(\cdot, Y^{(k)}, \kZ^{(k)}),  
\end{align*}
where $\kb^k = \nabla g(\kZ^{(k)})$, and $\nabla g : (\R^d)^n \to (\R^d)^n$ is the gradient of $g : (\R^d)^n \to \R$.
We view this a as a BSDE on $[\theta, T]$. Notice that since $g \in \lip(L)$, $\phi^k$ is $1$-Lipschitz, and $\sup_k \big(\norm{Y^{(k)}}_{\sinf} + \norm{\kZ^{(k)}}_{\bmo}\big) \leq C, \, \, \const{\norm{\xi}_{\linf}, \rho, \{a_m\}}$, and thus 
\begin{align*}
    \norm{\kb_k}_{\bmo} + \norm{\alpha^k}_{\lii} + \norm{\beta^k}_{\lii} \leq C, \, \, \const{\norm{\xi}_{\linf}, L, \rho, \{a_m\}}. 
\end{align*}
Propositions \ref{pro:lop} and \ref{pro:sliceable} thus show that that BSDE($\kA$) is strongly well-posed in $\sinf$, with $\opnorm{S_{\kA}}_{\infty, s} \leq C$, $\const{\norm{\xi}_{\linf}, L, \rho, \{a_m\}}$. The rest of the argument proceeds as in the proof of \ref{pro:smoothexist}.
\end{proof}

\subsubsection{Proof of Theorem \ref{thm:udmain}}

\begin{proposition} \label{pro:sinfstabud}
Suppose that $f \in \ud(L)$ and $\xi, \xi' \in \linf$. Suppose further that $(Y, \kZ)$, $(Y',\kZ') \in \sinf \times \bmo$ solve \eqref{bsde} with terminal conditions $\xi$ and $\xi'$, respectively. Then we have 
\begin{align*}
    \norm{\Delta Y}_{\sinf} + \norm{\Delta \kZ}_{\bmo} \leqc \norm{\Delta \xi}_{\linf}, \, \, \const{L, \max \{\norm{\kZ}_{\bmo}, \norm{\kZ'}_{\bmo}\}}, 
\end{align*}
where $\Delta Y = Y - Y'$, $\Delta \kZ = \kZ - \kZ'$, and $\Delta \xi = \xi - \xi'$. 
\end{proposition}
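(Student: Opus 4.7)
The plan is to mimic closely the proof of Proposition \ref{pro:sinfstab}, replacing the invocation of Proposition \ref{pro:outerdiag} (which handled the quadratic-linear case via right outer-product plus diagonal structure) with an invocation of Proposition \ref{pro:lop} (left outer-product structure).

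First I would linearize the equation satisfied by $(\Delta Y, \Delta \kZ) = (Y-Y', \kZ - \kZ')$. Since $f = g + ah$, we can write
\begin{align*}
    f(\cdot, Y, \kZ) - f(\cdot, Y', \kZ')
    = \frac{\Delta g}{\Delta Y} \Delta Y + \frac{\Delta g}{\Delta \kZ} \Delta \kZ + a\bigl(h(\kZ) - h(\kZ')\bigr).
\end{align*}
Using the $C^1$ assumption on $h$, define $\kb \in \sP((\R^d)^n)$ by $\kb = \int_0^1 \nabla h(\kZ' + s \Delta \kZ)\, ds$, so that $h(\kZ) - h(\kZ') = \kb \cdot \Delta \kZ$. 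A direct computation with our index conventions then gives $a(\kb \cdot \Delta \kZ) = (a\kb^T) \Delta \kZ$, where $a\kb^T$ is the left outer-product matrix with $(a\kb^T)^i_j = a_i \kb^j$. Thus $(\Delta Y, \Delta \kZ)$ solves the linear BSDE
\begin{align*}
    \Delta Y = \Delta \xi + \int_{\cdot}^T \bigl(\alpha\, \Delta Y + (\kA + \Delta \kA)\Delta \kZ\bigr) dt - \int_{\cdot}^T \Delta \kZ\, d\kB,
\end{align*}
with $\alpha = \frac{\Delta g}{\Delta Y}$, $\Delta \kA = \frac{\Delta g}{\Delta \kZ}$, and $\kA = a\kb^T$.

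The second step is to control the coefficients. Since $g \in \lip(L)$, we have $\norm{\alpha}_{\lii} + \norm{\Delta \kA}_{\lii} \leq L$. The key point is that $\kb \in \bmo$: the quadratic Lipschitz hypothesis $|h(\kz) - h(\kz')| \leq L(1 + |\kz| + |\kz'|)|\kz - \kz'|$ implies $|\nabla h(\kz)| \leq L(1 + 2|\kz|)$, hence $|\kb| \leqc 1 + |\kZ| + |\kZ'|$, from which
\begin{align*}
    \norm{\kb}_{\bmo} \leqc 1 + \max\{\norm{\kZ}_{\bmo}, \norm{\kZ'}_{\bmo}\}, \, \, \const{L}.
\end{align*}
Since $|a| \leq L$, Proposition \ref{pro:lop} (or more precisely its proof, which exhibits the stochastic exponential of $\kA$ as a true martingale satisfying $\rp$ for some $p>1$) combined with Proposition \ref{pro:linfexists} shows that BSDE($\kA$) is strongly well-posed in $\sinf$, with $\opnorm{S_{\kA}}_{\infty, s}$ bounded by a constant depending only on $L$ and $\max\{\norm{\kZ}_{\bmo}, \norm{\kZ'}_{\bmo}\}$.

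Finally, I would apply Proposition \ref{pro:sliceable} to absorb the bounded perturbations. Since $\Delta \kA \in \lii$ is trivially sliceable (any partition with mesh smaller than $\eps^2/\norm{\Delta\kA}_{\lii}^2$ works), the proposition yields a unique solution in $\sinf \times \bmo$ to the perturbed equation with $\alpha Y$ and $(\kA + \Delta\kA)\kZ$ coefficients, with the estimate
\begin{align*}
    \norm{\Delta Y}_{\sinf} + \norm{\Delta \kZ}_{\bmo} \leqc \norm{\Delta \xi}_{\linf},
\end{align*}
the constant depending only on $\opnorm{S_{\kA}}_{\infty, s}$, $\norm{\alpha}_{\lii}$, and $\norm{\Delta \kA}_{\lii}$, hence only on $L$ and $\max\{\norm{\kZ}_{\bmo}, \norm{\kZ'}_{\bmo}\}$, as required.

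The only genuine subtlety is the $\bmo$ bound on $\kb$ in the second step, which uses the precise quadratic-Lipschitz form of the growth condition built into the definition of $\ud(L)$; once this is in place, the whole argument reduces to assembling the three structural tools (left outer-product, sliceability, Malliavin-free linearization) already developed in Section \ref{sec:structural}.
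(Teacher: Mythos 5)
Your proposal is correct and follows essentially the same route as the paper: linearize the difference equation to obtain $\kA = a\kb^T$ with left outer-product structure (the paper writes $\kb = \frac{\Delta h}{\Delta \kZ}$ as a finite-difference quotient rather than your integral of $\nabla h$, but both yield the same $\bmo$ bound $\norm{\kb}_{\bmo} \leqc 1 + \max\{\norm{\kZ}_{\bmo}, \norm{\kZ'}_{\bmo}\}$), then invoke Proposition \ref{pro:lop} together with Proposition \ref{pro:linfexists} for strong well-posedness of BSDE($\kA$) in $\sinf$, and Proposition \ref{pro:sliceable} to absorb $\alpha$ and the sliceable perturbation $\Delta\kA$. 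No gaps.
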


\begin{proof}
We write 
\begin{align*}
    \Delta Y_{\cdot} = \Delta \xi + \int_{\cdot}^T \big(g(\cdot, Y^{(1)}, \kZ^{(1)}) - g(\cdot, Y^{(1)}, \kZ^{(1)}) + a\big(h(\kZ^1) - h(\kZ^2)\big) - \int_{\cdot}^T \Delta \kZ d\kB 
   \\ = \Delta \xi + \int_{\cdot}^T \big(\alpha \Delta Y + (\kA + \Delta \kA) \Delta \kZ \big) dt - \int_{\cdot}^T \kZ d\kB, 
\end{align*}
where 
\begin{align*}
    \alpha = \frac{\Delta g}{\Delta Y}, \, \, 
    \Delta \kA = \frac{\Delta g}{\Delta \kZ}, \, \,
    \kA = a \kb^T, \, \, \kb = \frac{\Delta h}{\Delta \kZ}.
\end{align*}
The rest of the proof is the same as that of Proposition \ref{pro:sinfstab}, with Proposition \ref{pro:lop} taking the place of Proposition \ref{pro:outerdiag}. 
\end{proof}

\begin{corollary} \label{cor:uniquenessud}
If $f \in \ud$ and $\xi \in \linf$, then there is at most one solution $(Y, \kZ) \in \sinf \times \bmo$ to \eqref{bsde}. 
\end{corollary}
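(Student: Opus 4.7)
The plan is to derive uniqueness as a direct consequence of the stability estimate in Proposition \ref{pro:sinfstabud}. Suppose $(Y,\kZ)$ and $(Y',\kZ')$ are two solutions in $\sinf \times \bmo$ to \eqref{bsde} with the same terminal condition $\xi \in \linf$. Since $f \in \ud$, we have $f \in \ud(L)$ for some $L > 0$, so the hypotheses of Proposition \ref{pro:sinfstabud} are satisfied with $\xi' = \xi$.

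Applying Proposition \ref{pro:sinfstabud} with these two solutions and with the constant $C$ depending on $L$ and $\max\{\norm{\kZ}_{\bmo}, \norm{\kZ'}_{\bmo}\}$, which is finite because both $\kZ$ and $\kZ'$ lie in $\bmo$, we obtain
\begin{align*}
\norm{Y - Y'}_{\sinf} + \norm{\kZ - \kZ'}_{\bmo} \leqc \norm{\xi - \xi}_{\linf} = 0.
\end{align*}
Hence $Y = Y'$ in $\sinf$ and $\kZ = \kZ'$ in $\bmo$, which yields uniqueness.

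There is no real obstacle here, since all the work has already been done in proving Proposition \ref{pro:sinfstabud}: the hard step was the linearization argument there, which rewrites the difference equation as a linear BSDE with coefficient matrix of the form $\kA + \Delta\kA$ with $\kA = a\kb^T$ of left outer-product type and $\Delta\kA$ bounded (hence sliceable), and then invokes Propositions \ref{pro:lop} and \ref{pro:sliceable} to control the solution operator. Once this stability estimate is available, the uniqueness corollary is a one-line consequence.
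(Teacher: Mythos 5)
Your proof is correct and matches the paper's intended argument exactly: the corollary is stated without proof immediately after Proposition \ref{pro:sinfstabud} precisely because it follows by taking $\xi' = \xi$ in that stability estimate, which is what you do. Your remark that the constant is finite because both $\kZ$ and $\kZ'$ lie in $\bmo$ is the only point that needs checking, and you handle it correctly.
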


\begin{proposition} \label{pro:sqstabud}
Suppose that $f \in \ud(L)$ and $\xi, \xi' \in \linf$. Suppose further that $(Y, \kZ)$, $(Y',\kZ') \in \sinf \times \bmo$ solve \eqref{bsde} with terminal conditions $\xi$ and $\xi'$, respectively. Then there are constants $q^* = q^*(\max \{\norm{\kZ}_{\bmo}, \norm{\kZ'}_{\bmo}\})$ and $C_q = C_q(\max \{\norm{\kZ}_{\bmo}, \norm{\kZ'}_{\bmo}\})$ such that, for each $q > q^*$, we have
\begin{align*}
    \norm{\Delta Y}_{\sque} + \norm{\Delta \kZ}_{\ltq} \leq_{C_q} \norm{\Delta \xi}_{\lque}.
\end{align*}
\end{proposition}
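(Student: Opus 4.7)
The plan is to mimic the argument of Proposition \ref{pro:sinfstabud}, but to replace the $\sinf$-theory for the resulting linearized BSDE with its $\sque$-counterpart. Setting $\Delta Y = Y - Y'$, $\Delta \kZ = \kZ - \kZ'$, $\Delta \xi = \xi - \xi'$ and using the finite-difference notation from Subsection \ref{sub:ql}, subtracting the two BSDEs gives
\begin{align*}
    \Delta Y_{\cdot} = \Delta \xi + \int_{\cdot}^T \bigl(\alpha \Delta Y + (\kA + \Delta \kA)\Delta \kZ\bigr) dt - \int_{\cdot}^T \Delta \kZ \, d\kB,
\end{align*}
with
\begin{align*}
    \alpha = \frac{\Delta g}{\Delta Y}, \quad \Delta \kA = \frac{\Delta g}{\Delta \kZ}, \quad \kA = a \kb^T, \quad \kb = \frac{\Delta h}{\Delta \kZ},
\end{align*}
exactly as in the proof of Proposition \ref{pro:sinfstabud}.

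The quantitative point is to bound the coefficients in the right function spaces. Because $g \in \lip(L)$, we have $\norm{\alpha}_{\lii}, \norm{\Delta \kA}_{\lii} \leqc L$; and because $h$ is $C^1$ with the locally quadratic Lipschitz estimate $|h(\kz) - h(\kz')| \leq L(1 + |\kz| + |\kz'|)|\kz - \kz'|$, we obtain the pointwise bound $|\kb| \leqc L(1 + |\kZ| + |\kZ'|)$. Squaring, integrating conditionally and using $(a+b+c)^2 \leqc a^2 + b^2 + c^2$ yields
\begin{align*}
    \norm{\kb}_{\bmo} \leqc L\bigl(1 + \norm{\kZ}_{\bmo} + \norm{\kZ'}_{\bmo}\bigr).
\end{align*}
In particular $\kA = a\kb^T$ has left outer-product structure with $\norm{\kb}_{\bmo}$ controlled purely by $L$, $|a|$ and $M \coloneqq \max\{\norm{\kZ}_{\bmo}, \norm{\kZ'}_{\bmo}\}$.

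The rest is a direct application of our linear theory. By Proposition \ref{pro:lop}, there exist $q^* = q^*(|a|, \norm{\kb}_{\bmo})$ and a constant $C_q = C_q(|a|, \norm{\kb}_{\bmo})$ such that BSDE($\kA$) is well-posed in $\sque$ for every $q > q^*$, with $\opnorm{S_{\kA}}_q \leq C_q$. Since $\Delta \kA$ and $\alpha$ are both in $\lii$ (so in particular $\Delta \kA$ is sliceable), Proposition \ref{pro:sliceable} upgrades this to well-posedness in $\sque$ of the linearized equation above, with
\begin{align*}
    \norm{\Delta Y}_{\sque} + \norm{\Delta \kZ}_{\ltq} \leq_{C_q} \norm{\Delta \xi}_{\lque},
\end{align*}
for every $q > q^*$, where $q^*$ and $C_q$ now depend only on $M$ (with $L$, $|a|$ absorbed into the universal constants, since they come from the ambient class $\ud(L)$ and the fixed driver $f$). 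This is the claimed estimate.

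The only non-cosmetic step, and the step I expect to be the main obstacle, is controlling $\kb$ in $\bmo$ using the quadratic growth of $h$; without this the outer-product matrix $\kA = a\kb^T$ would not fall into the framework of Proposition \ref{pro:lop}, and the entire strategy would collapse back onto the $\sinf$-argument of Proposition \ref{pro:sinfstabud}. Once $\norm{\kb}_{\bmo}$ is bounded in terms of $M$, everything else is the same bookkeeping as in the quadratic-linear case of Proposition \ref{pro:sqstab}.
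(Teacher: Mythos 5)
Your proposal is correct and follows essentially the same route as the paper, which proves this proposition by repeating the linearization of Proposition \ref{pro:sinfstabud} and invoking Proposition \ref{pro:lop} (in place of Proposition \ref{pro:outerdiag}) followed by Proposition \ref{pro:sliceable}. The one step you flag as the main obstacle, namely the bound $\norm{\kb}_{\bmo} \leqc L(1 + \norm{\kZ}_{\bmo} + \norm{\kZ'}_{\bmo})$ for $\kb = \frac{\Delta h}{\Delta \kZ}$, is exactly the detail the paper omits, and your derivation of it is correct.
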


\begin{proof}
Once again, the proof is the same as in the quadratic-linear case (Proposition \ref{pro:sqstab}) with Proposition \ref{pro:lop} replacing Proposition \ref{pro:outerdiag}. We omit the details.
\end{proof}

Now we complete the proof of Theorem \ref{thm:udmain}. 

\begin{proof}
Uniqueness follows from Corollary \ref{cor:uniquenessud}. For existence, we choose a sequence $\xi^k \in \doi$ such that $\sup_k \norm{\xi}_{\linf} \leq \norm{\xi}_{\linf}$ and $\xi^k \xrightarrow{\lque} \xi$ for each $q > 1$. Let $(Y^{(k)},\kZ^{(k)}) \in \sinf \times \bmo$ be the unique solution (whose existence is guaranteed by Proposition \ref{pro:smoothexist}) to the equation
\begin{align*}
    Y^{(k)}_{\cdot} = \xi^k + \int_{\cdot}^T f(\cdot, Y^{(k)}, \kZ^{(k)}) dt - \int_{\cdot}^T \kZ^{(k)} d\kB. 
\end{align*}
Lemma \ref{lem:ab} shows that since $\sup_k \norm{\xi_k}_{\linf} \leq \norm{\xi}_{\linf}$, we have
\begin{align*}
    \sup_k \big(\norm{Y^{(k)}}_{\sinf} + \norm{\kZ^{(k)}}_{\bmo}\big) \leq C, \,\, \const{\norm{\xi}_{\linf}, \rho, \{a_m\}}.
\end{align*} 
The remainder of the proof is the same as that of Theorem \ref{thm:qlmain}, with Proposition \ref{pro:sqstabud} playing the role of Proposition \ref{pro:sqstab}.
\end{proof}

\bibliographystyle{amsalpha}
\bibliography{reverseholder}
\end{document}